\font\teneufm=eufm10 \font\seveneufm=eufm7 \font\fiveeufm=eufm5
\newcommand{\C}{\mathbb{C}}
\newcommand{\Z}{\mathbb{Z}}
\newcommand{\A}{\mathcal A}
\newcommand{\g}{\mathfrak{g}}
\newcommand{\h}{\mathfrak{h}}
\newcommand{\vareps}{\varepsilon}
\DeclareMathOperator{\Supp}{Supp}
\DeclareMathOperator{\Span}{Span} 
\DeclareMathOperator{\Hom}{Hom} \DeclareMathOperator{\ch}{ch}
 \DeclareMathOperator{\ad}{ad}
\DeclareMathOperator{\Id}{Id}
\DeclareMathOperator{\wht}{wht}
\DeclareMathOperator{\Der}{Der}
\numberwithin{equation}{section}
\newtheorem{definition}{Definition}[section]
\newtheorem{example}[definition]{Example}
\theoremstyle{remark}
\newtheorem{remark}[definition]{Remark} 
\theoremstyle{plain} 
\newtheorem{theorem}[definition]{Theorem}
\newtheorem{lemma}[definition]{Lemma}
\newtheorem{proposition}[definition]{Proposition}
\def\Z{\mathbb Z}
\def\C{\mathbb C}
\begin{document}

\title{Bounded weight modules of the Lie algebra of vector fields on ${\mathbb C}^2$}

\author{Andrew Cavaness and Dimitar Grantcharov$^1$}

\address{Department of Mathematics \\
         University of Texas at Arlington \\ Arlington, TX 76021, USA}

         \email{cavaness@uta.edu}
         
\address{Department of Mathematics \\
         University of Texas at Arlington \\ Arlington, TX 76021, USA}

         \email{grandim@uta.edu}

\thanks{$^{1}$This work was partially supported by Simons Collaboration Grant 358245}

\maketitle
\begin{abstract}
We study weight modules of the Lie algebra $W_2$  of vector fields on $\C^2$. A classification of all simple weight modules of $W_2$ with a uniformly bounded set of weight multiplicities is provided. To achieve this classification we introduce a new family of generalized tensor $W_n$-modules. Our classification result is an important step in the  classification of all simple weight  $W_n$-modules with finite weight multiplicities.\\

\medskip\noindent 2000 MSC: 17B66, 17B10 \\

\noindent Keywords and phrases: Lie algebra, Cartan type, weight module, localization.

\end{abstract}

\section*{Introduction}
Lie algebras of vector fields have been studied since the appearance of infinite Lie groups in the works of S. Lie in the late 19th century. Based on the fundamental works of E. Cartan in the early 20th century, some of these infinite-dimensional Lie algebras are known as Lie algebras of Cartan type. A classical example of a Cartan type Lie algebra is  the Lie algebra $W_n$ consisting of derivations of the polynomial algebra $\C[x_1,...,x_n]$, or, equivalently, the Lie algebra of polynomial vector fields on $\C^n$. The first classification results concerning representations of $W_n$ and other Cartan type Lie algebras were  obtained by A. Rudakov in 1974-1975, \cite{Rud1}, \cite{Rud2}.  These results address the classification of a class of irreducible $W_n$-representations that satisfy some natural topological conditions. The so-called {\it tensor modules}, that is modules $T({\boldsymbol{\nu}}, S)$ whose underlying spaces are tensor products  $\boldsymbol{x}^{\boldsymbol{\nu}}\C[x_1^{\pm 1},..,x_n^{\pm 1}] \otimes S$ of a ``shifted'' Laurent polynomial ring and a finite-dimensional $\mathfrak{gl}_n$-module $S$, play an important role in the works of Rudakov. Tensor  $W_1$-modules and extensions  of tensor modules were studied extensively in the 1970's and in the 1980's by B. Feigin, D. Fuks, I. Gelfand, and others, see for example, \cite{FF}, \cite{Fuk}.

In this paper we focus on the category of weight representations of $W_n$, namely those that decompose as direct sums of weight spaces relative to the subalgebra $\mathfrak{h}_{W_n}$  of $W_n$ spanned by the derivations $x_1\partial_1$,...,$x_n\partial_n$. Weight representations of Lie algebras of vector fields (in particular, of $W_n)$) are subject of interest by both mathematicians and theoretical physicists in the last 30 years. Another important example of a Lie algebra of vector fields is the Witt algebra $\rm{Witt}_n$ consisting of the derivations of the Laurent polynomial algebra $\C[x_1^{\pm},...,x_n^{\pm 1}]$, or, equivalently, the Lie algebra of polynomial vector fields on the $n$-dimensional complex torus. In particular, $\rm{Witt}_1$ is the centerless Virasoro algebra. The classification of all simple weight representations with finite weight multiplicities of  $W_1$ and ${\rm Witt}_1$  (and hence of the Virasoro algebra) was obtained by O. Mathieu in 1992, \cite{M-Vir}, proving a conjecture of V. Kac, \cite{Kac}. Following a sequence of works of S. Berman, Y. Billig,  C. Conley, S. Eswara Rao, X. Guo, C. Martin, O. Mathieu, V. Mazorchuk, V. Kac, G. Liu, R. Lu, A. Piard, Y. Su, K. Zhao, very recently, Y. Billig and V. Futorny managed to extend Mathieu's classification result to ${\rm Witt}_n$ for arbitrary $n\geq 1$ (see \cite{BF} and the references therein). The classification theorem in  \cite{BF} states roughly that every nontrivial simple weight ${\rm Witt}_n$-module with finite weight multiplicities is either a submodule of a tensor module or a module of highest weight type. 

In contrast with ${\rm Witt}_n$, the classification of the simple weight $W_n$-modules $M$ with finite weight multiplicities is still an open problem for $n>1$. The possible supports (sets of weights)  of all such $M$ have been described by I. Penkov and V. Serganova in \cite{PS}. In addition, in  \cite{PS}, a parabolic induction theorem for such modules $M$ is proven. More precisely, it is shown that $M$ is a quotient of a parabolically induced module from a parabolic subalgebra $\mathfrak p$ of $W_n$. Unfortunately, the parabolic subalgebras $\mathfrak p = \mathfrak l \oplus \mathfrak n^+$ that appear in the parabolic induction theorem are quite complicated and have Levi components $\mathfrak l$ isomorphic to a semi-direct sum of Lie algebras of Cartan type and finite dimensional-reductive Lie algebras. Another obstacle in the study of weight $W_n$-modules is the fact that the $W_n$-modules $T({\boldsymbol{\nu}}, S)$ are highly reducible - they  may contain $2^n$ simple subquotients. 

The purpose of this paper is to make the first step towards the classification of the simple weight $W_n$-modules with finite weight multiplicities. Namely, we classify the simple bounded  modules of $W_2$, that is, all simple weight $W_2$-modules whose sets of weight multiplicities is uniformly bounded.  The classification is given in Theorem \ref{th-main} (the tensor modules are introduced in Definition \ref{def-tensor}). The second step in the weight module classification is to classify all simple bounded  $\mathfrak l$-modules, where  $\mathfrak l$ is a Levi subalgebra of a parabolic subalgebra of $W_n$. The last step is, based on the parabolic induction theorem of Penkov-Serganova, to complete the classification in question. The second and the third steps will be addressed in a subsequent paper. We note that for $n=2$, the classification of simple bounded $\mathfrak l$-modules, where $\mathfrak l$ is a Levi subalgebra of a parabolic subalgebra of $W_2$, is obtained in the present paper and, in fact, is used to classify the simple bounded $W_2$-modules. It turns out that in this case $\mathfrak l \simeq \Der \C[x] \ltimes \C[x]$. 

In addition to obtaining the classification of simple weight modules with finite weight multiplicities, the results in the present paper will be essential for the study  of the category  $\mathcal B$ of bounded representations of $W_n$. This category is intimately related to  the corresponding category of bounded $\mathfrak{sl}_{n+1}$-modules. It is expected that, like in the case of $\mathfrak{sl}_{n+1}$, the indecomposable injectives of $\mathcal B$  will have a nice geometric realizations in terms of twisted functions and twisted differential forms on algebraic varieties,  \cite{GS2},  \cite{GS3}.

An important tool  in the present paper is the twisted localization functor, a functor used by O. Mathieu in the proof of another fundamental result: the classification of all simple weight modules with finite weight multiplicities of finite-dimensional reductive Lie algebras, \cite{M}. Also, in order to deal with the reducibility of $T({\boldsymbol{\nu}}, S)$, we introduce a family of (generalized) tensor modules  $T({\boldsymbol{\nu}}, S, J)$. The modules $T({\boldsymbol{\nu}}, \boldsymbol{\lambda}, J) = T({\boldsymbol{\nu}}, S, J)$ are defined for  a tuple $J = (a_1,...,a_k)$  of signed integers $a_i = b_i^+$ or $a_i = b_i^-$, where the $b_i$'s are in the set of all indices $j$ such that $\lambda_j - \nu_j \in \Z$, and $\boldsymbol{\lambda}$ is the highest weight of $S$. Our main result is that all nonzero simple bounded  $W_2$-modules are isomorphic to $T({\boldsymbol{\nu}}, \boldsymbol{\lambda}, J) $ for some ${\boldsymbol{\nu}}, \boldsymbol{\lambda}, J$. A similar result will hold for the simple bounded weight $W_n$-modules ($n\geq 2$) as well, but   additional conditions for the tensor modules $T({\boldsymbol{\nu}}, \boldsymbol{\lambda}, J)$ corresponding to fundamental weights $\boldsymbol{\lambda}$ have to be imposed.

The content of the paper is as follows. In Section 2 we collect important results on the twisted localization functor, parabolic subalgebras and tensor modules of $W_n$. In particular we  provide an explicit list of  the possible parabolic subalgebras $\mathfrak p$ of $W_2$. In Section 3, we classify all simple bounded modules over the Lie algebra $\A = \Der \C[x] \ltimes \C[x]$. In Section 4, based on the results of Section 3, we complete the classification of simple bounded $W_2$-modules.

\medskip
\noindent{\it Acknowledgements.} We would like to thank M. Gorelik  and V. Serganova  for the fruitful discussions and helpful suggestions. We also would like to thank the referee for the valuable remarks and the careful reading of the manuscript. 

\section{Notation and Conventions}

Throughout the paper the ground field is $\mathbb C$. All vector spaces, algebras, and tensor products are assumed to be over $\mathbb C$ unless otherwise stated. 

By $W_n$ we denote the Lie algebra $ \Der \C [x_1,...,x_n] $ of derivations of ${\mathbb C} [x_1,...,x_n]$. Also, $\A_n$ will be  the semi-direct product $\A_n= \Der  \C [x_1,...,x_n] \ltimes \C [x_1,..,x_n]$. For simplicity we set $\A := \A_1$ and  $\partial_i :=\frac{\partial}{\partial x_i}$. Every element $w$ of $W_n$ can be written uniquely as $w= \sum_{i=1}^n f_i\partial_i$, for some $f_i \in \C[x_1,...,x_n]$.

By $ {\mathbb Z}_{\geq k}$ we denote the set of all integers $n$ such that $n \geq k$. We similarly define  $ {\mathbb Z}_{\leq k}$,  $ {\mathbb Z}_{> k}$,  $ {\mathbb R}_{\geq k}$, etc. If $M$ is a set of real numbers, and $S$ is
a subset of a real vector space $V$, then by $MS$ we denote the set of all $M$-linear combinations of elements in $S$. 

For a Lie algebra $\mathfrak a$ by $U(\mathfrak a)$ we denote the universal enveloping algebra of $\mathfrak a$.

Throughout the paper we use the multi index-notation for monomials: $\boldsymbol{x}^{\boldsymbol{\nu}} = x_1^{\nu_1}...x_n^{\nu_n}$ if 
$\boldsymbol{x} = (x_1,...,x_n)$ and $\boldsymbol{\nu} = (\nu_1,...,\nu_n)$. If $n$ is fixed, we set ${\mathbb C}[\boldsymbol{x}] = {\mathbb C}[x_1,...,x_n]$, ${\mathbb C}[\boldsymbol{x}^{\pm 1}] = {\mathbb C}[x_1^{\pm 1},...,x_n^{\pm 1}]$, and $\boldsymbol{x}^{\boldsymbol{\nu}}  {\mathbb C}[\boldsymbol{x}^{\pm 1}] = x_1^{\nu_1}...x_n^{\nu_n}{\mathbb C}[x_1^{\pm 1},...,x_n^{\pm 1}]$, where the latter is the span of all (formal) monomials $x_1^{\nu_1+k_1}...x_n^{\nu_n + k_n}$, $k_i \in {\mathbb Z}$. 

For an $n$-tuple $\boldsymbol{\nu} = (\nu_1,...,\nu_n)$ in ${\mathbb C}^n$, we set $\mbox{Int} (\boldsymbol{\nu}):=\{ i \; | \;  \nu_i \in {\mathbb Z}\}$.

\section{Preliminaries}

\subsection{The Lie algebra $\A$}

Recall that   $\A = \Der \C[x] \ltimes \C[x]$ and $W_1 =  \Der \C[x]$. Note that by definition $[D,f] = Df$ for $D \in \Der \C[x]$ and $f \in \C[x]$. In terms of generators and relations, $\mathcal A$ can be defined as follows: $$\A = \Span \{D_i, I_j \; | \; i \in \Z_{\geq -1},  j \in \Z_{\geq 0} \}$$ with
\begin{eqnarray*}
\left[D_i, D_j\right] &=& (j-i) D_{i+j},\\
\left[D_i, I_j\right] &=& j I_{i+j},\\
\left[I_i, I_j\right] &=& 0.
\end{eqnarray*}
Here $D_i$ and $I_j$ correspond to $x^{i+1} \partial$ and $x^j$, respectively. Note that the center of $\A$ is  generated by $I_0$. We say that an ${\mathcal A}$-module $M$ has central charge $c$ if $I_0m = cm$ for every $m \in M$. In particular, every irreducible ${\mathcal A}$-module $M$ has a central charge.

We say that $\h \subset \A$ is a \emph{Cartan subalgebra} of $\A$ if $\h$ is both self-normalizing and nilpotent. In what follows, we fix the Cartan subalgebra of $\mathcal A$ to be $\h_{\mathcal A} = \Span \{D_0, I_0\}$. We also have the triangular decomposition $\A = \A^- \oplus \A^0 \oplus \A^+$, where $\A^- = \Span \{ D_{-1}\}$, $\A^0 = \h_{\A}$, and $\A^+ = \Span \{D_i, I_j \; | \; i,j\geq 1\}$. Define $\varepsilon, \delta \in \h_{\A}^*$ by the identities $$\varepsilon(D_0) = 1, \; \; \varepsilon(I_0) = 0; \; \; \delta(D_0) = 0, \; \; \delta(I_0)=1.$$ 

\subsection{Injective and finite actions}
Let $\g$ be Lie algebra, and $M$ be a $\g$-module. We say that an element $x$ of $\g$ acts \emph{locally nilpotently} (or, \emph{finitely}) on a vector $m$ in $M$, if there is $N = N(x,m)$ such that $x^N(m) = 0$. If such $N$ does not exists we say that $x$ {\it acts injectively} on $m$. We say that $x$ acts injectively (respectively, finitely) on $M$ if $x$ acts injectively  (respectively, finitely) on all $m \in M$. 

We will often use the following setting. Let $x$ be an $\ad$-nilpotent element in $\g$ and let $M$ be a $\g$-module. Then the set $M^{\langle x \rangle}$ of all $m$ on which $x$ acts finitely is a submodule of $M$. In particular, if $M$ is simple, then every ad-nilpotent element $x$ of $\g$ acts either finitely or injectively on $M$.

\subsection{Weight modules} \label{subsec-wht} We first introduce weight modules in a general setting.
Let $\mathcal U$ be an associative unital algebra and $\mathcal H\subset\mathcal U$
be a commutative subalgebra. We assume in addition that 
$\mathcal H$  
is a polynomial algebra identified with the symmetric algebra of a vector space ${\mathfrak h}$, and that we
have a decomposition
$$\mathcal U=\bigoplus_{\mu\in {{\mathfrak h}^*}}\mathcal U^\mu,$$
where
$$\mathcal U^\mu=\{x\in\mathcal U | [h,x]=\mu(h)x, \forall h\in\mathfrak h\}.$$
Let $Q_{\mathcal U} = {\mathbb Z}\Delta_{\mathcal U}$ be the ${\mathbb Z}$-lattice in  ${\mathfrak h}^*$
generated by $\Delta_{\mathcal U}= \{ \mu \in {\mathfrak h}^* \; | \; {\mathcal U}^{\mu} \neq 0\}$. We  obviously have
${\mathcal U}^\mu {\mathcal U}^\nu\subset {\mathcal U}^{\mu+\nu}$.

We call a ${\mathcal U}$-module $M$ {\it a generalized weight $({\mathcal U}, {\mathcal H})$-module} (or just\emph{ generalized weight $\mathcal U$-module}) if  $M = \bigoplus_{\lambda \in {\mathfrak h}^*} M^{(\lambda)}$, where  
$$
M^{(\lambda)} = \{m\in M \; |\;  (h- \lambda (h)\mbox{Id})^N m=0\,\text{for some}\, N>0\, \text{and all}\, h \in \mathfrak h\}.
$$
We call $M^{(\lambda)}$ the generalized weight space of $M$ and $\dim
M^{(\lambda)}$ the weight multiplicity of the weight $\lambda$. A vector $v$ in $M^{(\lambda)}$ is called a \emph{weight vector of weight $\lambda$} and we write $\wht(v) = \lambda$.
Note that 
\begin{equation}\label{rootweight}
\mathcal U^\mu M^{(\lambda)}\subset M^{(\mu+\lambda)}.
\end{equation}
A generalized weight module $M$ is called a {\it weight $({\mathcal U}, {\mathcal H})$-module} (or just \emph{weight} \emph{$\mathcal U$-module}) if $M^{(\lambda)}  = M^{\lambda},$ where 
$$
M^{\lambda} = \{m\in M \; |\;  (h - \lambda(h) \mbox{Id}) m=0\,\text{ for all } h \in {\mathfrak h}\}.
$$

In the case when ${\mathcal U} = U(\mathfrak g)$ is the universal enveloping algebra of a Lie algebra $\mathfrak g$ and $\h$ is a subalgebra of $\mathfrak g$, a (generalized) weight $({\mathcal U}, {\mathcal H})$-module will be called (generalized) weight $(\mathfrak g, \mathfrak h)$-module.

\begin{definition}
\begin{itemize}
\item[(i)]An $\A$-module $M$  is a \emph{weight $\A$-module} if $M$ is a weight $(\A , \mathcal{H})$-module for $\mathcal H = \C [\h_{\mathcal A}]$.  If $M$ is a weight $\A$-module we call the set of weights $\lambda \in \h_{\A}^*$ such that $M^{\lambda} \neq 0$ the \emph{$\A$-support} (or simply the \emph{support}) of $M$ and denote it by $\Supp M$.

\item[(ii)]We say that a weight $\A$-module $M$ is \emph{bounded} if there is $N>0$ such that $\dim M^\lambda < N$ for all $\lambda \in \h_{\A}^*$. If $M$ is bounded we call $\sup \{ \dim M^{\lambda}\; | \; \lambda \in \h_{\A}^* \}$ the $\A$-\emph{degree} (or simply the \emph{degree}) of $M$. 
\end{itemize}
\end{definition}

The adjoint module $\A$ is a weight module of central charge $0$ such that $\A^{\lambda} \neq 0$ if and only if $\lambda = n \varepsilon$ for $n \in \Z_{\geq -1}$. The set $\Delta_{\A} = \{ - \varepsilon, n\varepsilon \; | \; n\in \Z_{>0}\}$ is the \emph{root system} of $\A$, and
$$
\A^{-\varepsilon} = \Span \{ D_{-1}\}; \, \A^{n\varepsilon} = \Span \{ I_n, D_n \},  n\in \Z_{>0}
$$
are the root spaces of $\A$. 

If $M$ has central charge $c$, then a weight of $M$ is of the form $\lambda \varepsilon +  c \delta$, for some $\lambda \in \C$. If $c$ is fixed, with a slight abuse of notation we set $M^{\lambda} = M^{\lambda \varepsilon +  c \delta}$, for all weight modules $M$ with central charge $c$. In particular,  $M = \bigoplus _{\lambda \in \C} M^\lambda$ and $\Supp M \subset \C$.

We similarly introduce the notions of weight and bounded $W_n$-modules. More precisely, let $\h_{W_n}$ (or simply  $\h_{W}$ if $n$ is fixed) be the subalgebra $\Span \{ x_1\partial_1,..., x_n\partial_n \}$. Then $\h_W$ is a Cartan subalgebra of $W_n$.

\begin{definition}
A $W_n$-module $M$  is a \emph{weight $W_n$-module} if $M$ is a weight $(W_n, {\mathcal H})$-module with $\mathcal H = \C [\h_W]$. We say that a weight $W_n$-module $M$ is \emph{bounded} if there is $N>0$ such that $\dim M^\lambda < N$ for all $\lambda \in \h_{W}^*$. If $M$ is bounded we call $\sup \{ \dim M^{\lambda}\; | \; \lambda \in \h_{W}^* \}$ the $W_n$-\emph{degree} (or simply the \emph{degree}) of $M$. 
\end{definition}

Note that $W_n$ is a weight $W_n$-module whose support is $\Delta_{W_n} \cup \{ 0\}$, where
$\Delta_{W_n}$ is the root system of $W_n$.  We identify the root lattice of $W_n$ with ${\mathbb Z}^n$ and will often write every element $\alpha$ of ${\mathbb Z} \Delta_{W_n}$ as an $n$-tuple $(\alpha_1,...,\alpha_n)$ of integers. In particular,
$$
\Delta_{W_n} \cup \{ 0\}= \{ (\alpha_1,...,\alpha_n) \; | \; \alpha_i \geq 0\} \sqcup  \{ (\alpha_1,...,\alpha_n) \; | \; \exists i: \alpha_i  = -1 \mbox{ and } \alpha_j \geq 0 \mbox{ for all } j\neq i \}.
$$
For simplicity we will often write $\Delta_{W}$ for $\Delta_{W_n}$. In what follows we use the (root) basis of $W_n$ consisting of the elements $\boldsymbol{x}^{\boldsymbol{\alpha}}  (x_i\partial_i)$, $\boldsymbol{\alpha} \in \Delta_{W_n} \cup \{ 0\}$, $i=1,...,n$. 

\subsection{Tensor modules} 
We say that $(\lambda_1,...,\lambda_n) \in {\mathbb C}^n$ is a dominant integral $\mathfrak{gl}_n$-weight if $\lambda_i - \lambda_{i+1} \in {\mathbb Z}_{\geq 0}$ for all $i = 1,..,n-1$. If $\boldsymbol{\lambda} = (\lambda_1,...,\lambda_n)$ is a dominant integral weight, by $L_{\mathfrak{gl}} (\boldsymbol{\lambda} ) = L_{\mathfrak{gl}} (\lambda_1,...,\lambda_n )$ we denote the simple finite-dimensional module with highest weight  $\boldsymbol{\lambda}$. 

For a dominant integral $\mathfrak{gl}_n$-weight $\boldsymbol{\lambda} = (\lambda_1,...,\lambda_n)$ and any $\boldsymbol{\nu} = (\nu_1,...,\nu_n)$ in $\C^n$, we define the $W_n$-modules $T(\boldsymbol{\nu},\boldsymbol{\lambda})$ as follows:
$$
T(\boldsymbol{\nu},\boldsymbol{\lambda}) = \boldsymbol{x}^{\boldsymbol{\nu}} {\mathbb C}[\boldsymbol{x}^{\pm 1}] \otimes L_{\mathfrak{gl}} (\boldsymbol{\lambda} ) 
$$
with $W_n$-action defined by
\begin{equation} \label{def-tensor-action}
( \boldsymbol{x}^{\boldsymbol{\alpha}} x_i\partial_i)\cdot (\boldsymbol{x}^{\bf s} \otimes v) = s_i  \boldsymbol{x}^{\boldsymbol{\alpha + s}} \otimes v+ \sum_{j=1}^n \alpha_j \boldsymbol{x}^{\boldsymbol{\alpha + s}} \otimes E_{ji}v,
\end{equation}
where $\boldsymbol{\alpha} \in \Delta_{W_n} \cup\{ 0 \}$, ${\bf s} \in \boldsymbol{\nu} + {\mathbb Z}^n$, $v \in  L_{\mathfrak{gl}} (\boldsymbol{\lambda} )$, and $E_{ji}$ is the $(j,i)$th elementary matrix of $\mathfrak{gl}_n$. As indicated in the introduction, these modules play important role in the classification of simple weight modules with finite weight multiplicities over various classes of Lie algebras. We  easily extend the $W_n$-action on $T(\boldsymbol{\nu},\boldsymbol{\lambda})$   to an ${\mathcal A}_n$-action. Namely, for $c \in {\mathbb C}$ we define $T(\boldsymbol{\nu},\boldsymbol{\lambda}, c)  = T(\boldsymbol{\nu},\boldsymbol{\lambda}) $ as vector space and set
\begin{equation} \label{x-i-action}
\boldsymbol{x}^{\bf j}\cdot (\boldsymbol{x}^{\bf s} \otimes v) := c \boldsymbol{x}^{\bf{j + s}} \otimes v.
\end{equation}
The next theorem gives a necessary and sufficient condition when two tensor modules are isomorphic as $W_n$-modules and ${\mathcal A}_n$-modules. The fact is well-known but for reader's convenience a short proof suggested by M. Gorelik is provided. 
\begin{proposition}
The following are equivalent.
\begin{itemize}
\item[(i)] $T(\boldsymbol{\nu},\boldsymbol{\lambda}) \simeq T(\boldsymbol{\nu}',\boldsymbol{\lambda}')$ as $W_n$-modules.
\item[(ii)] $T(\boldsymbol{\nu},\boldsymbol{\lambda}) \simeq T(\boldsymbol{\nu}',\boldsymbol{\lambda}')$ as ${\mathcal A}_n$-modules.
\item[(iii)] $\boldsymbol{\nu} - \boldsymbol{\nu}' \in {\mathbb Z}^n$ and $\boldsymbol{\lambda} = \boldsymbol{\lambda}'$.
\end{itemize}
\end{proposition}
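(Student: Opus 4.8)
The plan is to prove the cycle (iii) $\Rightarrow$ (ii) $\Rightarrow$ (i) $\Rightarrow$ (iii), with all of the content concentrated in the last implication. For (iii) $\Rightarrow$ (ii): if $\boldsymbol{\nu} - \boldsymbol{\nu}' \in \Z^n$ and $\boldsymbol{\lambda} = \boldsymbol{\lambda}'$, then $\boldsymbol{x}^{\boldsymbol{\nu}}\C[\boldsymbol{x}^{\pm 1}] = \boldsymbol{x}^{\boldsymbol{\nu}'}\C[\boldsymbol{x}^{\pm 1}]$, since shifting the exponent by an integer vector does not change the span, and $L_{\mathfrak{gl}}(\boldsymbol{\lambda}) = L_{\mathfrak{gl}}(\boldsymbol{\lambda}')$. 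Hence the two modules coincide as vector spaces and the action formulas \eqref{def-tensor-action}, \eqref{x-i-action} are literally identical, so the identity map is an $\A_n$-isomorphism. (We read (ii) with a common central charge $c$; the central element $1 \in \C[\boldsymbol{x}]$ acts by the scalar $c$ through \eqref{x-i-action}, so any $\A_n$-isomorphism forces the two charges to agree.) The implication (ii) $\Rightarrow$ (i) is immediate: one restricts an $\A_n$-isomorphism along $W_n \subset \A_n$.

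For (i) $\Rightarrow$ (iii), I would first fix $\boldsymbol{\nu}$ modulo $\Z^n$ from the support. Let $\phi\colon T(\boldsymbol{\nu},\boldsymbol{\lambda}) \to T(\boldsymbol{\nu}',\boldsymbol{\lambda}')$ be a $W_n$-isomorphism. Taking $\boldsymbol{\alpha} = 0$ in \eqref{def-tensor-action} gives $(x_i\partial_i)(\boldsymbol{x}^{\mathbf{s}} \otimes v) = s_i\, \boldsymbol{x}^{\mathbf{s}} \otimes v$, so $\boldsymbol{x}^{\mathbf{s}} \otimes v$ is an honest weight vector of weight $\mathbf{s}$, $M^{\mathbf{s}} = \boldsymbol{x}^{\mathbf{s}} \otimes L_{\mathfrak{gl}}(\boldsymbol{\lambda})$, and $\Supp T(\boldsymbol{\nu},\boldsymbol{\lambda}) = \boldsymbol{\nu} + \Z^n$. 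Since $\phi$ preserves weights and weight spaces, comparing supports yields $\boldsymbol{\nu} - \boldsymbol{\nu}' \in \Z^n$. By the first paragraph I may then replace $\boldsymbol{\nu}'$ by $\boldsymbol{\nu}$, and stripping off the monomial $\boldsymbol{x}^{\mathbf{s}}$ turns each $\phi|_{M^{\mathbf{s}}}$ into a linear isomorphism $\phi_{\mathbf{s}}\colon L \to L'$, where $L = L_{\mathfrak{gl}}(\boldsymbol{\lambda})$ and $L' = L_{\mathfrak{gl}}(\boldsymbol{\lambda}')$. It then remains to prove $L \cong L'$ as $\mathfrak{gl}_n$-modules, for then equality of the (dominant integral) highest weights gives $\boldsymbol{\lambda} = \boldsymbol{\lambda}'$.

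The heart of the matter is that every basis element $\boldsymbol{x}^{\boldsymbol{\alpha}}(x_i\partial_i)$ acts, after stripping monomials, by the operator
\[
O^{L}_{i,\boldsymbol{\alpha}}(\mathbf{s}) \;=\; s_i \;+\; \sum_{j=1}^n \alpha_j\, E_{ji}\ \colon\ L \longrightarrow L ,
\]
which is affine in $\mathbf{s}$: its $\mathbf{s}$-independent part $\sum_j \alpha_j E_{ji}$ carries the $\mathfrak{gl}_n$-data, while the scalar part $s_i$ is the same universal function on the $L$- and $L'$-sides. The intertwining property reads $\phi_{\mathbf{s}+\boldsymbol{\alpha}}\circ O^{L}_{i,\boldsymbol{\alpha}}(\mathbf{s}) = O^{L'}_{i,\boldsymbol{\alpha}}(\mathbf{s})\circ \phi_{\mathbf{s}}$ for all $\mathbf{s} \in \boldsymbol{\nu}+\Z^n$. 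Taking $\boldsymbol{\alpha} = \pm \mathbf{e}_i$ shows that the transition operators $\partial_i$ and $x_i^2\partial_i$ act by $s_i - E_{ii}$ and $s_i + E_{ii}$, which are invertible for all but finitely many $\mathbf{s}$; this forces $\{\phi_{\mathbf{s}}\}$ to depend rationally on $\mathbf{s}$, each $\phi_{\mathbf{s}}$ being built from a fixed $\phi_{\mathbf{s}_0}$ by expressions in $(s_i \pm E_{ii})^{\pm 1}$. Feeding this back into the relations for the remaining $\boldsymbol{\alpha}$ converts them into rational identities in $\mathbf{s}$, and comparing coefficients (already in the case $n=1$ the $\partial$- and $x^2\partial$-relations force $E_{11}^{L}$ and $E_{11}^{L'}$ to have the same eigenvalue) shows that a single $\phi_{\mathbf{s}_0}$ intertwines every $E_{ji}^{L}$ with $E_{ji}^{L'}$, i.e. is a $\mathfrak{gl}_n$-isomorphism $L \to L'$.

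I expect the main obstacle to be exactly this last disentangling. Because the maps $\phi_{\mathbf{s}}$ may differ from weight to weight, one cannot naively differentiate in $\mathbf{s}$ inside a single intertwining relation; the device that unlocks the argument is the invertibility of the diagonal transitions $s_i \pm E_{ii}$, which links the $\phi_{\mathbf{s}}$ into one rationally varying family and turns the affine-in-$\mathbf{s}$ relations into rational identities whose coefficients isolate the individual matrix units $E_{ji}$. One should also note that the diagonal directions alone recover only the symmetric combinations $E_{ii} - E_{ii}^2$, so the off-diagonal fields $x_i\partial_j$, together with the variation in $\mathbf{s}$, are genuinely needed to pin down all of $L$, and hence $\boldsymbol{\lambda}$.
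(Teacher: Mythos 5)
Your reductions (iii)$\Rightarrow$(ii)$\Rightarrow$(i), the support comparison giving $\boldsymbol{\nu}-\boldsymbol{\nu}'\in\Z^n$, and the translation of the isomorphism into weight-space maps $\phi_{\mathbf{s}}$ satisfying $\phi_{\mathbf{s}+\boldsymbol{\alpha}}\circ O^{L}_{i,\boldsymbol{\alpha}}(\mathbf{s}) = O^{L'}_{i,\boldsymbol{\alpha}}(\mathbf{s})\circ \phi_{\mathbf{s}}$ are all correct and agree with the paper's setup. The gap is that the entire content of (i)$\Rightarrow$(iii) --- deducing $\boldsymbol{\lambda}=\boldsymbol{\lambda}'$ --- is compressed into the unproved assertion that ``comparing coefficients shows that a single $\phi_{\mathbf{s}_0}$ intertwines every $E_{ji}$,'' and the one piece of evidence you offer for this mechanism, the parenthetical claim that already for $n=1$ the $\partial$- and $x^2\partial$-relations force $E_{11}^{L}$ and $E_{11}^{L'}$ to have the same eigenvalue, is false. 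By the paper's own Proposition \ref{prop-tens-w1-mod}(i)(b) and Theorem \ref{th-class-w1}, $T(\nu,0)\simeq T(\nu,1)$ as $W_1$-modules whenever $\nu\notin\Z$: the family $\phi_s = s\cdot\mathrm{id}$ (the de Rham map $x^s\otimes v_0\mapsto s\,x^s\otimes v_1$) satisfies every one of your relations $\phi_{s+k}\,(s+k\lambda) = (s+k\lambda')\,\phi_s$ with $\lambda=0$, $\lambda'=1$. Indeed, your own closing sentence contradicts the parenthetical: the diagonal loops recover only $E_{ii}-E_{ii}^2$, which is invariant under $E_{ii}\mapsto \mathrm{Id}-E_{ii}$, and this symmetry is exactly the $\{0,1\}$ ambiguity above. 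Since your proposed mechanism makes no essential use of $n\geq 2$, it would prove the statement for $n=1$, where it fails; so the mechanism cannot be sound as described. (A smaller inaccuracy: $s_i\pm E_{ii}$ is invertible outside finitely many \emph{hyperplanes} of $\mathbf{s}$-values, not for all but finitely many $\mathbf{s}$, so the ``rational family'' must be built more carefully.)

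You correctly sense that the off-diagonal fields are ``genuinely needed,'' but you never say how they are used, and that is precisely where all the difficulty sits; the paper's proof shows one way to make them bite, and it is quite different from propagating an intertwiner across all weight spaces. The paper applies $\psi$ to a \emph{single} vector $\boldsymbol{x}^{\mathbf{s}}\otimes v_{\lambda}$, where $v_\lambda$ is a highest weight vector and $s_i\neq 0$ for all $i$. For $j<i$ (this is where $n\geq 2$ enters irreplaceably) the second-order operator $(x_ix_j\partial_i)\partial_j$ acts on this vector by the scalar $s_i(s_j-\lambda_j)$, so the image $v=\sum_t v_t$ (decomposed into $\mathfrak{gl}_n$-weight components) satisfies $(s_i+E_{ji})(s_j-E_{jj})v = s_i(s_j-\lambda_j)v$. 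Extracting the component of a \emph{minimal} weight $\boldsymbol{\eta}_1$ kills the $E_{ji}$-contribution and yields $E_{jj}v_1=\lambda_jv_1$ for all $j<n$, hence $\boldsymbol{\lambda}-\boldsymbol{\eta}_1\in\C\vareps_n$; comparing with maximality of $\boldsymbol{\lambda}'$ (and arguing symmetrically with $\psi^{-1}$) gives $\boldsymbol{\lambda}-\boldsymbol{\lambda}'\in\C\vareps_n$, and then $\dim L_{\mathfrak{gl}}(\boldsymbol{\lambda})=\dim L_{\mathfrak{gl}}(\boldsymbol{\lambda}')$ together with the Weyl dimension formula forces $\boldsymbol{\lambda}=\boldsymbol{\lambda}'$. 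To repair your argument you would need an equally concrete exploitation of the mixed directions $x_ix_j\partial_i$, $j\neq i$; as written, the central step is a gap, and the heuristic supporting it proves too much.
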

\begin{proof}
The fact that (iii) implies (i) and (ii) is straightforward. Also, obviously (ii) implies (i). It remains to show that (i) implies (iii). 

Let $\psi: T(\boldsymbol{\nu},\boldsymbol{\lambda})\to T(\boldsymbol{\nu}' ,\boldsymbol{\lambda}' )$ be an isomorphism.  Since the $\boldsymbol{\mu}$-weight space of $T(\boldsymbol{\nu},\boldsymbol{\lambda})$ is $\boldsymbol{x}^{\boldsymbol{\mu}} \otimes L(\boldsymbol{\lambda})$, we have that for every $\bf{s} \in \boldsymbol{\nu} + \Z^n$ and $u \in L(\boldsymbol{\lambda})$, $\psi (\boldsymbol{x}^{\boldsymbol{s}} \otimes u) = \boldsymbol{x}^{\boldsymbol{s}} \otimes u'$ for some  $u' \in L(\boldsymbol{\lambda}')$. Also, $\dim L(\boldsymbol{\lambda}) = \dim L(\boldsymbol{\lambda}')$.

Let $v_{\lambda}$ be a highest weight vector of $L(\boldsymbol{\lambda})$, and let us fix ${\bf s} \in \boldsymbol{\nu} + \Z^n$ such that $s_i \neq 0$ for every $i$. Also, let $\psi(\boldsymbol{x}^{\bf s}\otimes v_{\lambda})=\boldsymbol{x}^{\bf s} \otimes v$ for some
$v\in L(\boldsymbol{\lambda}')$. Denote by $v_i$  the $\mathfrak{gl}_n$-weight components of $v$, i.e. 
$v=\sum_{i=1}^t v_i$, where  
$v_i\in L(\boldsymbol{\lambda})^{\boldsymbol{\eta}_i}$ are nonzero vectors and $\boldsymbol{\eta}_1,\ldots,\boldsymbol{\eta}_t$
are distinct weights (of $\mathfrak{gl}_n$).
Assume that $\boldsymbol{\eta}_1$ is a minimal element in $\{\boldsymbol{\eta}_1,..., \boldsymbol{\eta}_n\}$ with respect to the standard
partial order on ${\mathfrak h}_{\mathfrak{gl}_n}^*$. Then for $1\leq j<i\leq n$ we have 
\begin{eqnarray*}
(x_ix_j \partial_i)\partial_j(\boldsymbol{x}^{\boldsymbol{s}}\otimes v)&=&\boldsymbol{x}^{\boldsymbol{s}}\otimes (s_i+E_{ji})(s_j-E_{jj}) v; \\
(x_ix_j \partial_i)\partial_j(\boldsymbol{x}^{\boldsymbol{s}}\otimes v_{\lambda})&=&s_i(s_j-\lambda_j)\cdot \boldsymbol{x}^{\boldsymbol{s}}\otimes v_{\lambda},
\end{eqnarray*}
where $\boldsymbol{\lambda} = (\lambda_1,...,\lambda_n)$. Thus 
$$(s_i+E_{ji})(s_j-E_{jj})v=s_i(s_j-\lambda_j)v.$$
Using the minimality of $\boldsymbol{\eta}_1$, after taking the $\boldsymbol{\eta}_1$-components of the vector above, we obtain $E_{jj}v_1 = \lambda_j v_1$ for all $j<n$. Thus $\boldsymbol{\lambda}-\boldsymbol{\eta}_1 = c \vareps_n$ some $c \in \C$. But since $\boldsymbol{\eta}_1$ is in the support of $L(\boldsymbol{\lambda}')$ and $\boldsymbol{\lambda}'$ is a maximal weight in this support, we have $\boldsymbol{\lambda}'-\boldsymbol{\lambda} \geq - c \vareps_n$. With similar reasoning we obtain $\boldsymbol{\lambda}-\boldsymbol{\lambda}' \geq - c' \vareps_n$ for some $c' \in \C$. Thus $\boldsymbol{\lambda}-\boldsymbol{\lambda}' \in \C \vareps_n$. Now using this and the Weyl dimension formula for $\dim L(\boldsymbol{\lambda}) = \dim L(\boldsymbol{\lambda}')$, we prove that  $\boldsymbol{\lambda} = \boldsymbol{\lambda}'$.
\end{proof}
In what follows we introduce some important subquotients of the  $W_n$-modules $T(\boldsymbol{\nu},\boldsymbol{\lambda}) $ defined above. First, for any ${\bf z} \in {\mathbb C}^n$, we set $\mathcal{PM}({\bf z}) = \{+,-\}^{{\rm Int} (\bf z)}$. Every element $J: {\rm Int} (\bf z) \to \{ +,-\}$ of $\mathcal{PM}({\bf z}) $ will be written as $({i_1}^{J(i_1)},...,{i_k}^{J(i_k)})$, where ${\rm Int} ({\bf z})  = \{ i_1,...,i_k\}$ and $i_1<\cdots <i_k$. For example, if ${\rm Int} ({\bf z})  = \{1, 2\}$, then the elements of $\mathcal{PM}({\bf z})$ are: $(1^+, 2^+)$, $(1^+, 2^-)$, $(1^-, 2^+)$, $(1^-, 2^-)$. 

For every element $J$ in  $\mathcal{PM}({\bf z})$ we write $J^+$ (respectively,  $J^-$) for the subset of $J$ consisting of all $i_j^{J(i_j)}$ with $J(i_j) = ``+''$  (respectively,  $J(i_j) = ``-''$).

\begin{definition} \label{def-tensor} Let $\boldsymbol{\lambda}$ be a dominant integral $\mathfrak{gl}_n$-weight,  $\boldsymbol{\nu} \in {\mathbb C}^n$, and $J$  be in $\mathcal{PM} (\boldsymbol{\lambda} - \boldsymbol{\nu})$. We define $T(\boldsymbol{\nu}, \boldsymbol{\lambda}, J)$ as follows.

\begin{itemize}
\item[(i)] $T(\boldsymbol{\nu}, \boldsymbol{\lambda}, \emptyset) := T(\boldsymbol{\nu}, \boldsymbol{\lambda})$.

\item[(ii)]  If $J^+ \neq \emptyset$ and $J^- = \emptyset$, then
$$
T(\boldsymbol{\nu}, \boldsymbol{\lambda}, J) := \Span \{ x^{\eta} \otimes v_{\mu} \; | \; v_{\mu} \in L_{\mathfrak{gl}} (\lambda)^{\mu}, \eta_i - \mu_i \in {\mathbb Z}_{\geq 0} \mbox{ for all } i \in J\}. 
$$

\item[(iii)] If  $J^- \neq \emptyset$, then 
$$
T(\boldsymbol{\nu}, \boldsymbol{\lambda}, J) := T(\boldsymbol{\nu}, \boldsymbol{\lambda}, J^+)/\left( \sum_{j^- \in J^-}T(\boldsymbol{\nu}, \boldsymbol{\lambda}, J^+ \cup \{j^+\}) \right)
$$
\end{itemize}
\end{definition}
It is easy to check that if $J^- = \emptyset$, then $T(\boldsymbol{\nu}, \boldsymbol{\lambda}, J)$ is a submodule of $T(\boldsymbol{\nu}, \boldsymbol{\lambda})$. Therefore all  $T(\boldsymbol{\nu}, \boldsymbol{\lambda}, J)$ are bounded weight $W_n$-modules of degree $\dim L_{\mathfrak{gl}} (\boldsymbol{\lambda})$.

Using (\ref{x-i-action}), we easily endow $T(\boldsymbol{\nu}, \boldsymbol{\lambda}, J)$ with an $\A_n$-module structure and the resulting module $T(\boldsymbol{\nu}, \boldsymbol{\lambda}, J, c)$ has central charge $c$. In what follows, we will call both $T(\boldsymbol{\nu}, \boldsymbol{\lambda}, J)$ and $T(\boldsymbol{\nu}, \boldsymbol{\lambda}, J, c)$ \emph{generalized tensor modules}, or simply \emph{tensor modules}. 


\subsection{Tensor $\A_1$- and $W_1$-modules and classification of simple weight $W_1$-modules with finite weight multiplicities}
In the case $n=1$ we will use the notation $T(\nu, \lambda)$ and  $T(\nu, \lambda, c)$  for the tensor modules corresponding to $\nu, \lambda \in {\mathbb C}$. More explicitly,  $T(\nu, \lambda, c) = \Span \{ x^{\nu + \ell} \otimes v_{\lambda} \; | \; \ell \in {\mathbb Z}\}$, with action of $\A$ defined by:
\begin{eqnarray*}
D_i (x^{\nu + \ell} \otimes v_{\lambda}) & = &  (\nu + \ell + i \lambda) x^{\nu + \ell + i} \otimes v_{\lambda},\\
I_j (x^{\nu + \ell} \otimes v_{\lambda}) & = &  c x^{\nu + \ell + j} \otimes v_{\lambda}
\end{eqnarray*}

\begin{remark}
Another important class of modules that appears in the literature consists of the \emph{tensor densities modules}. Namely these are the $\A$-modules $\mathcal F (\nu , \lambda, c) = x^{\mu} {\mathbb C}[x^{\pm 1}] (dx)^{\lambda}$ of central charge $c$ and with the natural action of the generators $D_i$ and $I_j$. One easily can show that $\mathcal F (\nu, \lambda, c) \simeq T(\nu + \lambda, \lambda, c)$.
\end{remark}
We will also write $T(\lambda, \lambda, +) = T(\lambda, \lambda, 1^+)$ and  $T(\lambda, \lambda, -) = T(\lambda, \lambda, 1^-)$. In particular,   $T(\lambda, \lambda, +) = \Span \{ x^{\lambda + n} \otimes v_{\lambda}\; | \; n \in {\mathbb Z}_{\geq 0}\}$ and  $T(\lambda, \lambda, -)  = T(\lambda, \lambda) / T(\lambda, \lambda, +)$. Furthermore, the corresponding $\A$-modules to $T(\lambda, \lambda, \pm)$ will be denoted by  $T(\lambda, \lambda, c, \pm)$. The Jordan-H\"older decomposition of  the modules $T(\nu, \lambda)$ and $T(\nu, \lambda,c)$ is described in the following two propositions. The proof is standard and is omitted.
\begin{proposition} \label{prop-tens-w1-mod} Let $\lambda, \nu \in {\mathbb C}$
\begin{itemize}
\item[(i)] The $W_1$-modules $T(\nu_1, \lambda_1)$ and $T(\nu_2, \lambda_2)$ are isomorphic if and only if:
\begin{itemize}
\item[(a)] $\nu_1-\nu_2 \in {\mathbb Z}$ and $\lambda_1 = \lambda_2$, or 
\item[(b)] $\nu_1-\nu_2 \in {\mathbb Z}$, $\nu_1 \notin \Z$, and $\{\lambda_1, \lambda_2\} = \{ 0,1\} $. 
\end{itemize}
\item[(ii)] The $W_1$-module $T(\nu, \lambda)$ is simple if and only if $\lambda - \nu \notin {\mathbb Z}$.
\item[(iii)]   The $W_1$-module $T(\lambda, \lambda, +)$ is simple if and only if $\lambda \neq 0$.  The $W_1$-module $T(0, 0, +)$ has length two with a simple submodule isomorphic to $\mathbb C$ and a simple quotient isomorphic to $T(1,1,+)$.
\item[(iv)]   The $W_1$-module $T(\lambda, \lambda, -)$ is simple if and only if $\lambda \neq 1$.  The $W_1$-module $T(1, 1, -)$ has length two with a simple submodule isomorphic to $T(0,0,-)$, and a simple quotient isomorphic to $\mathbb C$.
\end{itemize}
\end{proposition}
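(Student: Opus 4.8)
I will prove all four parts by a uniform submodule analysis, using that on $T(\nu,\lambda)$ (suppressing the one-dimensional factor $L_{\mathfrak{gl}}(\lambda)$) the action takes the clean form $D_i\, x^s = (s+i\lambda)\, x^{s+i}$ for $s\in\nu+\Z$, $i\ge-1$. Each $T(\nu,\lambda)$ is a weight module with one-dimensional weight spaces (the weight of $x^s$ is $s$), so every submodule is the span of a set of basis vectors $\{x^s : s\in S\}$ with $S\subseteq\nu+\Z$, and $S$ spans a submodule exactly when $s\in S$ and $s+i\lambda\neq0$ force $s+i\in S$. The only lowering operator is $D_{-1}$, whose coefficient is $s-\lambda$; this is the feature separating $W_1=\Der\C[x]$ from the full Witt algebra and it is what drives the entire analysis. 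I would prove (i) first, since (ii)--(iv) invoke the elementary direction (a) of the isomorphism criterion.

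For (i), equality of supports $\nu_1+\Z$ immediately gives $\nu_1-\nu_2\in\Z$, after which I reindex so that $\nu_1=\nu_2=\nu$. An isomorphism preserves the one-dimensional weight spaces, hence is diagonal, $x^s\mapsto c_s x^s$ with every $c_s\neq0$, and intertwining $D_i$ gives $(s+i\lambda_1)c_{s+i}=c_s(s+i\lambda_2)$ for all $s,i$. If $\lambda_1=\lambda_2$ the constant sequence solves this, yielding (a). If $\lambda_1\neq\lambda_2$, computing the ratio $c_{s+2}/c_s$ in two ways (through $D_2$, and through $D_1$ applied twice) gives, for all but finitely many $s$, the polynomial identity $(s+2\lambda_2)(s+\lambda_1)(s+\lambda_1+1)=(s+2\lambda_1)(s+\lambda_2)(s+\lambda_2+1)$; equating the coefficients of $s$ and the constant terms yields $(\lambda_1-\lambda_2)(\lambda_1+\lambda_2-1)=0$ together with $\lambda_1\lambda_2=0$, which forces $\{\lambda_1,\lambda_2\}=\{0,1\}$. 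Conversely $c_s=s$ realizes $T(\nu,0)\cong T(\nu,1)$ precisely when $\nu\notin\Z$ (so that all $c_s\neq0$), while for $\nu\in\Z$ the modules are not isomorphic because, taking $\nu=0$, $T(0,0)$ has the one-dimensional submodule $\C\cdot x^0$ whereas $T(0,1)$ has no one-dimensional submodule. I expect this classification, and specifically pinning the exceptional pair to $\{0,1\}$, to be the step demanding the most care.

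For (ii), if $\lambda-\nu\notin\Z$ then $s-\lambda\neq0$ for every $s$, so $D_{-1}$ never vanishes and any nonzero submodule is downward closed, hence equals $\nu+\Z$ or $\{s\le s_0\}$. A proper such set has a top vector $x^{s_0}$ from which the raising coefficients $s_0+i\lambda$ $(i\ge1)$ would all have to vanish, which is impossible when $\lambda-\nu\notin\Z$; hence $T(\nu,\lambda)$ is simple. If instead $\lambda-\nu\in\Z$, then by (i)(a) we may assume $\nu=\lambda$, where $D_{-1}x^\lambda=0$ and $\{x^s : s-\lambda\in\Z_{\ge0}\}=T(\lambda,\lambda,+)$ is a proper nonzero submodule, so $T(\nu,\lambda)$ is reducible.

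For (iii) and (iv) the same method applies to the half-infinite modules. In $T(\lambda,\lambda,+)$ (exponents $s-\lambda\in\Z_{\ge0}$) the lowering coefficient $s-\lambda$ vanishes only at the bottom $s=\lambda$, so submodules are downward closed and any proper one is finite; non-escape at its top forces all raising coefficients to vanish, which occurs only for $\lambda=0$, where $\C\cdot x^0$ is the unique proper submodule and the quotient is isomorphic to $T(1,1,+)$ via $x^n\mapsto n\,x^n$ $(n\ge1)$. In $T(\lambda,\lambda,-)=T(\lambda,\lambda)/T(\lambda,\lambda,+)$ (exponents $s-\lambda\in\Z_{<0}$) the coefficient $s-\lambda\neq0$ again keeps submodules downward closed, so a proper one is $\{s\le\lambda-k\}$; non-escape at its top $x^{\lambda-k}$ requires $\lambda(1+i)-k=0$ for all $1\le i\le k-1$, which forces $\lambda=1$ and $k=2$. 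This yields the unique proper submodule $\{s\le-1\}\cong T(0,0,-)$ (via $\bar x^s\mapsto s^{-1}\bar x^s$) with one-dimensional quotient $\C$, and simplicity in every remaining case.
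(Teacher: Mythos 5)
Your proof is correct. The paper itself omits the proof of this proposition, stating only that it is ``standard,'' and your argument is precisely that standard argument: since all weight spaces of $T(\nu,\lambda)$ are one-dimensional, submodules correspond to subsets of exponents closed under the nonvanishing raising/lowering coefficients $s+i\lambda$ and $s-\lambda$, and isomorphisms are diagonal maps $x^s\mapsto c_s x^s$. All the delicate points check out: the coefficient comparison pinning the exceptional pair to $\{\lambda_1,\lambda_2\}=\{0,1\}$ with the isomorphism $c_s=s$ valid exactly when $\nu\notin\Z$, the distinction between $T(0,0)$ and $T(0,1)$ via the trivial submodule $\C x^0$, and the non-escape analysis in (iv) forcing $(\lambda,k)=(1,2)$ together with the explicit isomorphism $\bar x^s\mapsto s^{-1}\bar x^s$ onto $T(0,0,-)$.
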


In view of the above proposition, for $\lambda \neq 0$ we set $L(\lambda,+):=T(\lambda, \lambda, +)$ and $L(\lambda, -) = T(\lambda+1, \lambda+1, -)$. We also set $L(0) := \mathbb C$. Note that $L(\lambda, +)$  and $L(\lambda, -)$  are highest weight modules with highest weight $\lambda$ with respect to the Borel subalgebras ${\mathfrak b} (+) = \Span \{ D_i \; | \; i \in  {\mathbb Z}_{\geq 0} \}$ and ${\mathfrak b} (-) = \Span \{ D_i \; | \; i \in  \{0,-1\}\}$, respectively.

We now look at the structure of the tensor $\A$-modules.  In the case $c = 0$, we simply restate Proposition \ref{prop-tens-w1-mod} replacing the statements for $T(\nu,\lambda)$,  $T(\lambda ,\lambda,\pm)$ by $T(\nu,\lambda,0)$, $T(\lambda ,\lambda,0,\pm)$, respectively. We also write  $L(\lambda,0,+)$ and $L(\lambda,0,-)$ for  $T(\lambda,\lambda,0,+)$ and $T(\lambda+1,\lambda+1,0,-)$ if $\lambda \neq 0$, and $L(0,0) = \C$.

For tensor $\A$-modules with nonzero central charge we have the following.

\begin{proposition} \label{prop-tens-a1-mod} Let $\lambda, \nu \in {\mathbb C}$ and let $c \neq 0$.
\begin{itemize}
\item[(i)] The $\A$-modules $T(\nu_1, \lambda_1,c)$ and $T(\nu_2, \lambda_2,c)$ are isomorphic if and only if $\nu_1-\nu_2 \in {\mathbb Z}$ and $\lambda_1 =  \lambda_2$. 
\item[(ii)] The $\A$-module $T(\nu, \lambda,c)$ is simple if and only if $\lambda - \nu \notin {\mathbb Z}$.
\item[(iii)]   The $\A$-modules $T(\lambda, \lambda, c, +)$ and $T(\lambda, \lambda, c, -)$ are simple for all $\lambda \in \C$.
\end{itemize}
\end{proposition}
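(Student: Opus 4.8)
The plan is to use that every module here is a weight module whose weight spaces $M^{\nu+\ell}$ are one-dimensional, spanned by $x^{\nu+\ell}\otimes v_\lambda$. Consequently every submodule is the span of those basis vectors whose indices lie in some $S\subseteq\Z$, and simplicity reduces to showing that $S$ must be $\emptyset$ or all of the relevant index set. The organizing observation -- and the one feature that distinguishes the $\A$-case with $c\neq 0$ from the $W_1$-case of Proposition \ref{prop-tens-w1-mod} -- is that $I_1$ sends $x^{\nu+\ell}\otimes v_\lambda$ to $c\,x^{\nu+\ell+1}\otimes v_\lambda$, a \emph{nonzero} higher basis vector regardless of $\lambda$ and $\nu$. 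Hence the index set $S$ of any submodule is upward closed, and the only generator that can lower an index is $D_{-1}$, acting by $x^{\nu+\ell}\otimes v_\lambda\mapsto(\nu+\ell-\lambda)\,x^{\nu+\ell-1}\otimes v_\lambda$; its coefficient $\nu+\ell-\lambda$ is the sole potential source of degeneracy.

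For (i), the $\A$-support of $T(\nu,\lambda,c)$ equals $\nu+\Z$, so an isomorphism forces $\nu_1-\nu_2\in\Z$, and since the monomial bases then coincide I may assume $\nu_1=\nu_2=\nu$. One-dimensionality of weight spaces gives $\psi(x^{\nu+\ell}\otimes v_{\lambda_1})=a_\ell\,x^{\nu+\ell}\otimes v_{\lambda_2}$ with $a_\ell\neq 0$; intertwining $\psi$ with $I_1$ and using $c\neq 0$ yields $a_{\ell+1}=a_\ell$, so all $a_\ell$ equal a common constant, and then intertwining with $D_1$ yields $\nu+\ell+\lambda_1=\nu+\ell+\lambda_2$, i.e. $\lambda_1=\lambda_2$. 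The converse is immediate because the two modules then literally coincide.

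For (ii) and (iii) I run the same bookkeeping on the index set $S$ of a nonzero submodule. By upward closure $S=\Z_{\geq k}$ (after restricting to the relevant index set), and applying $D_{-1}$ to the bottom vector is the only obstruction: it forces the coefficient $\nu+k-\lambda$ to vanish. In (ii) this is possible exactly when $\lambda-\nu=k\in\Z$, which both proves simplicity when $\lambda-\nu\notin\Z$ and exhibits the proper submodule $\Span\{x^{\nu+\ell}\otimes v_\lambda : \ell\geq\lambda-\nu\}$ when $\lambda-\nu\in\Z$. In $T(\lambda,\lambda,c,+)=\Span\{x^{\lambda+n}\otimes v_\lambda : n\geq 0\}$ the bottom vector $x^\lambda\otimes v_\lambda$ already has vanishing $D_{-1}$-coefficient $\lambda-\lambda=0$, whereas any would-be bottom at $k\geq 1$ has nonzero coefficient $k$ and thus cannot occur; so $S=\Z_{\geq 0}$ and the module is simple. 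In $T(\lambda,\lambda,c,-)$, with basis $\bar x^{\lambda+n}\otimes v_\lambda$ for $n<0$, $D_{-1}$ lowers the index with the nonzero coefficient $n$ (so $S$ is downward closed) while $I_1$ raises it with coefficient $c\neq 0$ whenever $n\leq -2$; together these force $S=\Z_{<0}$, giving simplicity for every $\lambda$.

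The only genuinely delicate point I anticipate is (iii), precisely because the parallel $W_1$-statements in Proposition \ref{prop-tens-w1-mod} fail at $\lambda=0$ and $\lambda=1$, where a one-dimensional trivial piece splits off. The whole content of (iii) is that the central operators $I_j$ destroy that splitting: for $W_1$ the constant $x^0$ generates a submodule because every $D_i$ annihilates it, whereas for $\A$ one has $I_1(x^0\otimes v_0)=c\,x\otimes v_0\neq 0$. The plan therefore hinges on checking that this upward gluing by $I_1$ actually reaches every relevant weight even where the $D_i$-coefficients degenerate; I expect the one spot needing care to be the boundary index $n=-1$ in the minus case, where $I_1$ annihilates $\bar x^{\lambda-1}\otimes v_\lambda$ and the needed downward step must instead be supplied by $D_{-1}$.
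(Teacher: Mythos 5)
Your proof is correct. The paper itself gives no proof of this proposition (it is declared ``standard'' and omitted), and your argument --- one-dimensional weight spaces force every submodule to be the span of a subset of the monomial basis, $I_1$ acting with nonzero coefficient $c$ makes the index set upward closed, and $D_{-1}$ with coefficient $\nu+\ell-\lambda$ is the only possible downward obstruction --- is precisely the standard argument being alluded to, including the correct handling of part (i) via the $I_1$- and $D_1$-intertwining relations and of the boundary index $n=-1$ in $T(\lambda,\lambda,c,-)$.
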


Naturally, for $c \neq 0$ we set $L(\lambda,c,+) := T(\lambda,\lambda,c,+)$ and $L(\lambda,c,-) := T(\lambda+1,\lambda+1,c,+)$.

We finish this subsection with the classification theorem for all simple weight $W_1$-modules with finite weight multiplicities due to  O. Mathieu, see \cite{M-Vir}.

\begin{theorem} \label{th-class-w1}
Every simple weight $W_1$-module with finite weight multiplicities is isomorphic to a module in the following list:  $T(\nu, \lambda)$, $\lambda -\nu \notin {\mathbb Z}$, $L(\eta, +)$, $L(\eta, -)$, $\eta \neq 0$, $L(0)$. The only isomorphisms among the modules in the list are: $T(\nu, \lambda) \simeq T(\nu + n, \lambda)$ for $n \in {\mathbb Z}$ and $\lambda - \nu \notin {\mathbb Z}$; $T(\nu,0) \simeq T(\nu,1)$, for $\nu \notin \Z$.
\end{theorem}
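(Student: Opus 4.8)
The plan is to organize the simple weight $W_1$-modules $M$ with finite weight multiplicities according to the unique weight-lowering root vector $D_{-1} = \partial$, which is $\ad$-locally nilpotent. By the injective/finite dichotomy for $\ad$-nilpotent elements recalled above, $D_{-1}$ acts on the simple module $M$ either locally finitely or injectively. When $D_{-1}$ acts locally finitely, $M$ contains a vector killed by $D_{-1}$, and since the only other root vectors $D_i$ ($i\geq 1$) raise the weight, $\Supp M$ is bounded below. When $D_{-1}$ acts injectively, $\Supp M$ is unbounded below, and a further split occurs: either $\Supp M$ is bounded above---in which case weight considerations force a top vector annihilated by all $D_i$, $i\geq 1$---or $\Supp M = \nu + \Z$ is the full coset. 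This yields three cases: bounded-below (lowest weight), bounded-above (highest weight), and dense.

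The two extreme cases are elementary precisely because $W_1$ has a single lowering direction. In the bounded-above case $M$ is cyclic on its top vector $v$ and $M = \C[D_{-1}]v$, so every weight space is at most one-dimensional; reading off the $D_0$-eigenvalue of $v$ identifies $M$ with the simple highest-weight module, which by Proposition \ref{prop-tens-w1-mod} is $L(\eta,-)$ for $\eta \neq 0$, or $L(0) = \C$ in the degenerate case. In the bounded-below case $M$ is a quotient of the lowest-weight (Verma-type) module of lowest weight $\mu$; since $T(\mu,\mu,+)$ is itself simple for $\mu \neq 0$ by Proposition \ref{prop-tens-w1-mod}(iii), it is the unique simple such quotient, giving $M \cong T(\mu,\mu,+) = L(\mu,+)$, while $\mu = 0$ again produces $L(0)$. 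This accounts for every module on the list except the dense tensor modules.

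The essential case is the dense one, and here I would apply the twisted localization functor at $D_{-1}$. Injectivity makes $D_{-1}$ an Ore element, so $\widetilde M = D_{-1}^{-1}M$ is a weight module on which $D_{-1}$ is invertible and which contains $M$; twisting by complex powers $D_{-1}^{x}$ sweeps $\widetilde M$ across the cosets $\nu + x + \Z$ and assembles the modules into a coherent family of constant weight multiplicity. I would then identify this family with the family of tensor modules, realizing $M$ as a submodule of some $T(\nu,\lambda)$; the density of $M$ together with the simplicity criterion of Proposition \ref{prop-tens-w1-mod}(ii) ($\lambda - \nu \notin \Z$) forces $M \cong T(\nu,\lambda)$. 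The coset recovers $\nu$ and the $D_0$-eigenvalue normalized against the $D_{-1}$-degree recovers $\lambda$, after which the listed isomorphisms $T(\nu,\lambda) \cong T(\nu+n,\lambda)$ and $T(\nu,0) \cong T(\nu,1)$ for $\nu \notin \Z$ are exactly Proposition \ref{prop-tens-w1-mod}(i).

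The main obstacle lies entirely in the dense case: proving that finite weight multiplicities are automatically uniformly bounded, so that $\widetilde M$ has weight spaces of constant finite dimension and the coherent family is well-defined. Injectivity of $D_{-1}$ only forces the multiplicities to be monotone along the coset, not bounded, and excluding finite-but-unbounded growth is the genuinely hard input---this is the core of Mathieu's argument. Once uniform boundedness (in fact multiplicity one for $W_1$) is in hand, identifying the localized module with a localized tensor module reduces to a rigidity computation on the action of $D_{-1}, D_0, D_1$.
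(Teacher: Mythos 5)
The first thing to note is that the paper does not prove this statement at all: Theorem \ref{th-class-w1} is imported from Mathieu's work, with the proof deferred entirely to \cite{M-Vir}. So there is no internal argument to compare against, and your attempt has to stand on its own as a proof. Its skeleton is sound: the trichotomy via the action of $D_{-1}$ (locally finite, versus injective with support bounded above, versus injective and dense) is correct, and the two extreme cases are handled correctly --- a vector killed by $D_{-1}$ plus PBW gives $M = U(\mathfrak{n}^+)v$ in the bounded-below case, a top vector killed by all $D_i$, $i \geq 1$, gives $M = \C[D_{-1}]v$ in the bounded-above case, and Proposition \ref{prop-tens-w1-mod} then pins down these simple highest/lowest weight modules as $L(\eta,\pm)$ or $L(0)$.

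The genuine gap is the dense case, which is the entire content of the theorem beyond these elementary reductions. You invoke twisted localization at $D_{-1}$ to build a ``coherent family of constant weight multiplicity'' and then assert that this family can be ``identified with the family of tensor modules,'' but both steps are exactly what must be proved, and you prove neither. First, as you yourself concede, nothing in your argument excludes a dense simple module with finite but unbounded multiplicities; injectivity of $D_{-1}$ only gives the monotonicity $\dim M^{\lambda} \leq \dim M^{\lambda-1}$. Second, even granting uniform boundedness, showing that a bounded dense simple module has degree one and is isomorphic to some $T(\nu,\lambda)$ is not a routine ``rigidity computation on $D_{-1}, D_0, D_1$'': it is the main classification argument of \cite{M-Vir}, which for $W_1$ and the Virasoro algebra proceeds by growth estimates on multiplicity functions rather than by the coherent-family machinery of \cite{M}, the latter being developed for finite-dimensional reductive Lie algebras. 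Deferring precisely these points to ``the core of Mathieu's argument'' restates the difficulty of the theorem rather than resolving it; within this paper the honest options are to cite \cite{M-Vir}, as the authors do, or to actually supply those two arguments.
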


\subsection{Twisted localization of weight modules}
We first introduce the twisted localization functor in a general setting. Let $\mathcal U$, $\mathcal H = \C [\h]$ be as in \S \ref{subsec-wht}. Let  $a$ be an ad-nilpotent element of $\mathcal U$. Then the set $\langle a \rangle = \{ a^n \; | \; n \geq 0\}$ is an Ore subset of $\mathcal U$ (see for example \S 4 in \cite{M}) which allows us to define the  $\langle a \rangle$-localization $D_{\langle a \rangle} \mathcal U$ of $\mathcal U$. For a $\mathcal U$-module $M$  by $D_{\langle a \rangle} M = D_{\langle a \rangle} {\mathcal U} \otimes_{\mathcal U} M$ we denote the $\langle a \rangle$-localization of $M$. Note that if $a$ is injective on $M$, then $M$ is isomorphic to a submodule of $D_{\langle a \rangle} M$. In the latter case we will identify $M$ with that submodule, and will consider $M$ as a submodule of $D_{\langle a \rangle} M$.

We next recall the definition of the generalized conjugation of $D_{\langle a \rangle} \mathcal U$ relative to $x \in {\mathbb C}$. This is the automorphism  $\phi_x : D_{\langle a \rangle} \mathcal U \to D_{\langle a \rangle} \mathcal U$ given by $$\phi_x(u) = \sum_{i\geq 0} \binom{x}{i} \ad (a)^i (u) a^{-i}.$$ If $x \in \Z$, then $\phi_x(u) = a^xua^{-x}$. With the aid of $\phi_x$ we define the twisted module $\Phi_x(M) = M^{\phi_x}$ of any  $D_{\langle a \rangle} \mathcal U$-module $M$. Finally, we set $D_{\langle a \rangle}^x M = \Phi_x D_{\langle a \rangle} M$ for any $\mathcal U$-module $M$ and call it the \emph{twisted localization} of $M$ relative to $a$ and $x$. We will use the notation $a^x\cdot m$  (or simply $a^x m$) for the element in  $D_{\langle a \rangle}^x M$ corresponding to $m \in D_{\langle a \rangle} M$. In particular, the following formula holds in $D_{\langle a \rangle}^{x} M$:
$$
u (a^{x} m) = a^{x} \left( \sum_{i\geq 0} \binom{-x}{i} \ad (a)^i (u) a^{-i}m\right)
$$
for $u \in \mathcal U$, $m \in D_{\langle a \rangle}  M$.

We easily check that if $a$ is a weight element of $\mathcal U$ (i.e. $a \in {\mathcal U}^{\mu}$ for some $\mu \in {\mathfrak h}^*$) and $M$ is a (generalized) $({\mathcal U}, {\mathcal H})$-weight module, then $D_{\langle a \rangle}^x M$ is a  (generalized) $({\mathcal U}, {\mathcal H})$-weight module.  We will apply the twisted localization functor for several pairs $({\mathcal U}, {\mathcal H})$, and in particular  in the following two cases:  $\mathcal U = U(\mathcal A)$, $\mathcal H = \C[\h_{\A}]$;  and  $\mathcal U = U(W_2)$, $\mathcal H= \C[\h_{W_2}]$. 

\begin{lemma} \label{lem-tw-loc-simple}
Let $a \in {\mathcal U}$ be an $\ad$-nilpotent weight element in ${\mathcal U}$, $M$ be a simple $a$-injective weight ${\mathcal U}$-module, and let $z \in \C$. If $N$ is any simple submodule of $D^z_{\langle a \rangle}M$, then $D_{\langle a \rangle}M \simeq D^{-z}_{\langle a \rangle}N$. In particular, if $a$ acts bijectively on $M$, $M  \simeq D^{-z}_{\langle a \rangle}N$.
\end{lemma}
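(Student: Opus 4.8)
The plan is to reduce the statement to two structural facts about the localization and twisting functors and then combine them. The first fact is that the generalized conjugations satisfy the cocycle identity $\phi_x\phi_y=\phi_{x+y}$ on $D_{\langle a\rangle}\mathcal U$ (a Vandermonde-type binomial computation with the defining sum), so that the twisting functors satisfy $\Phi_x\Phi_y=\Phi_{x+y}$; in particular each $\Phi_x$ is an auto-equivalence of the category of $D_{\langle a\rangle}\mathcal U$-modules with inverse $\Phi_{-x}$. The second fact is that the localization of a simple $a$-injective $\mathcal U$-module is simple as a module over the localized algebra $D_{\langle a\rangle}\mathcal U$. Granting these, my goal becomes to prove the single isomorphism $D_{\langle a\rangle}N\simeq D^{z}_{\langle a\rangle}M$ of $D_{\langle a\rangle}\mathcal U$-modules; applying $\Phi_{-z}$ to it and using $\Phi_{-z}\Phi_z=\Phi_0=\mathrm{Id}$ then yields $D^{-z}_{\langle a\rangle}N=\Phi_{-z}D_{\langle a\rangle}N\simeq\Phi_{-z}\Phi_z D_{\langle a\rangle}M=D_{\langle a\rangle}M$, which is exactly the claim.

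For the simplicity fact, I would argue that any $D_{\langle a\rangle}\mathcal U$-submodule $P$ of $D_{\langle a\rangle}M$ equals $D_{\langle a\rangle}(P\cap M)$: every element of $D_{\langle a\rangle}M$ has the form $a^{-n}m$ with $m\in M$, and if $a^{-n}m\in P$ then $m=a^{n}(a^{-n}m)\in P\cap M$, so $P\subseteq D_{\langle a\rangle}(P\cap M)\subseteq P$. Since $P\cap M$ is a $\mathcal U$-submodule of the simple module $M$, it is $0$ or $M$, whence $D_{\langle a\rangle}M$ is simple over $D_{\langle a\rangle}\mathcal U$. Because $\Phi_z$ is an equivalence, $D^{z}_{\langle a\rangle}M=\Phi_zD_{\langle a\rangle}M$ is likewise simple over $D_{\langle a\rangle}\mathcal U$. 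Next I record that $\phi_z(a)=a$ (only the $i=0$ term of the defining sum survives since $\ad(a)(a)=0$), so $a$ acts on $\Phi_z D_{\langle a\rangle}M$ exactly as on $D_{\langle a\rangle}M$, namely bijectively; consequently $a$ acts injectively on the submodule $N$, and hence $N$ embeds into its localization $D_{\langle a\rangle}N$.

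It remains to identify $D_{\langle a\rangle}N$ with $D^{z}_{\langle a\rangle}M$. Applying the exact functor $D_{\langle a\rangle}$ to the inclusion $N\hookrightarrow D^{z}_{\langle a\rangle}M$ and using $D_{\langle a\rangle}(D^{z}_{\langle a\rangle}M)\simeq D^{z}_{\langle a\rangle}M$ (as $a$ is already bijective there), I obtain an injection $D_{\langle a\rangle}N\hookrightarrow D^{z}_{\langle a\rangle}M$ whose image is the $D_{\langle a\rangle}\mathcal U$-submodule generated by $N$, namely $\sum_{n\geq 0}a^{-n}N$. This image is a nonzero $D_{\langle a\rangle}\mathcal U$-submodule of the simple module $D^{z}_{\langle a\rangle}M$, hence equals it, giving $D_{\langle a\rangle}N\simeq D^{z}_{\langle a\rangle}M$ and completing the reduction above. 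The final ``in particular'' is immediate: if $a$ acts bijectively on $M$ then $M$ is already a $D_{\langle a\rangle}\mathcal U$-module, so $D_{\langle a\rangle}M\simeq M$ and therefore $M\simeq D^{-z}_{\langle a\rangle}N$. I expect the principal obstacle to be the careful bookkeeping that $N$ is $a$-injective and that it generates all of the simple localized module $D^{z}_{\langle a\rangle}M$ — that is, pinning down precisely where simplicity over the localized algebra (rather than over $\mathcal U$) is used — together with the verification of the cocycle identity $\Phi_x\Phi_y=\Phi_{x+y}$ underlying the invertibility of the twist.
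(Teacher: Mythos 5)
Your proof is correct and follows essentially the same route as the paper's: both arguments rest on the simplicity of $D_{\langle a\rangle}M$ and $D^{z}_{\langle a\rangle}M$ as $D_{\langle a\rangle}\mathcal U$-modules, realize $D_{\langle a\rangle}N$ as a (necessarily equal, by simplicity) submodule of $D^{z}_{\langle a\rangle}M$, and then apply the inverse twist $\Phi_{-z}$ to conclude $D^{-z}_{\langle a\rangle}N\simeq D_{\langle a\rangle}M$. The only difference is one of detail, not of method: the paper cites the simplicity of the localized modules and the invertibility of the twist as known facts, whereas you verify them explicitly (via the identity $P=D_{\langle a\rangle}(P\cap M)$ for submodules and the cocycle relation $\Phi_x\Phi_y=\Phi_{x+y}$).
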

\begin{proof}
We use the fact  that if $M$ is a simple weight ${\mathcal U}$-module, then $D_{\langle a \rangle}M$ and $D_{\langle a \rangle}^z M$  are simple $D_{\langle a \rangle} {\mathcal U}$-modules. So, if  $N$ is any simple submodule of $D^z_{\langle a \rangle}M$, then $D_{\langle a \rangle} N$ is a submodule of $D^z_{\langle a \rangle}M$. This forces $D_{\langle a \rangle} N \simeq D^z_{\langle a \rangle}M$ and $D^{-z}_{\langle a \rangle} N \simeq D_{\langle a \rangle}M$. If $a$ acts bijectively, then $M \simeq D_{\langle a \rangle}M$. \end{proof}

The above lemma will be applied both for $\mathcal U = U(\A)$ and  $\mathcal U = U(W_2)$. For reader's convenience, we list some (but not all) ad-locally nilpotent weight elements $a$ in $\mathcal U$ in these two cases:
 \begin{itemize}
 \item $a = D_{-1}, I_j$, $j \geq 0$ for $\mathcal U = U(\A)$;
  \item $a = \partial_1, \partial_2, x_1\partial_2, x_2\partial_1$ for $\mathcal U = U(W_2)$.
 \end{itemize}

\begin{lemma} \label{lem-iso-loc-a} Let $\nu \notin \Z$. Then the following $\A$-module isomorphisms hold. 
\begin{itemize}
\item[(i)] $D_{\langle I_1 \rangle}^{\nu} T (\lambda, \lambda, c, +) \simeq T (\lambda + \nu, \lambda, c)$, whenever $c \neq 0$. 
\item[(ii)] $D_{\langle D_{-1} \rangle}^{-\nu} T (\lambda, \lambda, c, -) \simeq T (\lambda + \nu, \lambda,c)$.
\item[(iii)] $D_{\langle D_{-1} \rangle}^{\nu-\eta} T (\lambda+ \nu, \lambda,c) \simeq  T (\lambda + \eta, \lambda,c)$, whenever  $ \eta \notin {\mathbb Z}$.
\end{itemize}
\end{lemma}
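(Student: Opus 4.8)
The plan is to prove each of the three isomorphisms by exhibiting an explicit weight-preserving bijection and checking that it intertwines the $\A$-action through the twisted-action formula
\[
u(a^{x}m)=a^{x}\Big(\sum_{i\ge 0}\binom{-x}{i}\ad(a)^{i}(u)\,a^{-i}m\Big),
\]
combined with the bracket relations of $\A$. Two structural facts make this tractable. First, all the modules in sight have degree $\dim L_{\mathfrak{gl}}(\lambda)=1$, so every weight space is one-dimensional and a candidate map is determined, up to a diagonal rescaling $u_{\ell}\mapsto\beta_{\ell}u_{\ell}$, by matching weights. Second, $\A$ is generated as a Lie algebra by $D_{-1}$, $D_{2}$ and $I_{1}$ (from $D_{-1},D_{2}$ one recovers all $D_{k}$, and then $I_{k}=[D_{k-1},I_{1}]$), so it suffices to verify intertwining for these three elements. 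Because $\ad(a)$ is nilpotent on each $D_{k}$ and $I_{j}$ (order $k+2$, resp. $j+1$, for $a=D_{-1}$; order $2$, resp. $1$, for $a=I_{1}$), every sum that occurs is finite and each verification collapses to an elementary binomial identity. Conceptually the three claims say the same thing in the density realization $\mathcal F(\nu,\lambda,c)=x^{\nu}\C[x^{\pm1}](dx)^{\lambda}\simeq T(\nu+\lambda,\lambda,c)$ of the Remark: twisted localization at $D_{-1}=\partial$, resp. $I_{1}$, shifts the density parameter, and the fractional-derivative coefficients produced by $\partial^{x}$ are precisely the rescaling factors $\beta_{\ell}$.

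For (i) the computation is clean and needs no rescaling. Since $c\ne 0$, the element $I_{1}$ acts injectively on $T(\lambda,\lambda,c,+)$ by the uniform rule $x^{\lambda+\ell}\otimes v_{\lambda}\mapsto c\,x^{\lambda+\ell+1}\otimes v_{\lambda}$, so $D_{\langle I_{1}\rangle}T(\lambda,\lambda,c,+)\simeq T(\lambda,\lambda,c)$ and each $I_{1}^{-i}$ acts by the uniform scalar $c^{-i}$. I then send $I_{1}^{\nu}(x^{\lambda+\ell}\otimes v_{\lambda})\mapsto x^{\lambda+\nu+\ell}\otimes v_{\lambda}$. As $\ad(I_{1})I_{j}=0$ and $\ad(I_{1})^{2}D_{i}=0$, the relevant sums have at most two terms, and the two-term computation yields exactly the coefficient $(\lambda+\nu+\ell+i\lambda)$ for $D_{i}$ and $c$ for $I_{j}$, matching the $T(\lambda+\nu,\lambda,c)$-action verbatim.

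For (iii), $D_{-1}$ acts bijectively on $T(\lambda+\nu,\lambda,c)$ because $\nu\notin\Z$, so $D_{\langle D_{-1}\rangle}T(\lambda+\nu,\lambda,c)=T(\lambda+\nu,\lambda,c)$ and only the twist by $\nu-\eta$ survives. Now $D_{-1}$ acts by the weight-dependent rule $x^{\lambda+\nu+\ell}\otimes v_{\lambda}\mapsto(\nu+\ell)x^{\lambda+\nu+\ell-1}\otimes v_{\lambda}$, so a naive monomial identification fails; instead I send $D_{-1}^{\nu-\eta}(x^{\lambda+\nu+\ell}\otimes v_{\lambda})\mapsto\beta_{\ell}(x^{\lambda+\eta+\ell}\otimes v_{\lambda})$, where matching the $D_{-1}$-action forces $\beta_{\ell}/\beta_{\ell-1}=(\nu+\ell)/(\eta+\ell)$; these are well defined and nonzero since $\nu,\eta\notin\Z$. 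With $\beta_{\ell}$ fixed I then verify intertwining of $I_{1}$ (a two-term sum) and of $D_{2}$ (a sum of at most four terms), each a Pochhammer/binomial identity. Part (ii) is the heart of the lemma. On $T(\lambda,\lambda,c,-)$ the operator $D_{-1}$ acts by $\overline{x^{\lambda+\ell}\otimes v_{\lambda}}\mapsto\ell\,\overline{x^{\lambda+\ell-1}\otimes v_{\lambda}}$ for $\ell\le-1$, hence injectively, but $D_{\langle D_{-1}\rangle}T(\lambda,\lambda,c,-)$ is \emph{not} $T(\lambda,\lambda,c)$: it is a simple module with full support $\lambda+\Z$ on which $D_{-1}$ is bijective, the newly created weight-$\lambda$ vector being sent by $D_{-1}$ to the class of $x^{\lambda-1}\otimes v_{\lambda}$ with coefficient $1$ rather than $0$. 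I describe this module $P$ by a basis $\{u_{\ell}\}_{\ell\in\Z}$ with $u_{\ell}=\overline{x^{\lambda+\ell}\otimes v_{\lambda}}$ for $\ell\le-1$ and $u_{\ell}=D_{-1}^{-(\ell+1)}u_{-1}$ for $\ell\ge 0$, so that $D_{-1}u_{\ell}=\ell u_{\ell-1}$ for $\ell\le-1$ and $D_{-1}u_{\ell}=u_{\ell-1}$ for $\ell\ge0$. Twisting $P$ by $-\nu$ and matching $D_{-1}$ produces factors $\beta_{\ell}$ with $\beta_{\ell}/\beta_{\ell-1}=1/(\nu+\ell)$ for $\ell\ge0$ and $\ell/(\nu+\ell)$ for $\ell\le-1$ (all nonzero since $\nu\notin\Z$), after which the same two checks ($I_{1}$, $D_{2}$) identify the twist with $T(\lambda+\nu,\lambda,c)$.

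The main obstacle is the $D_{-1}$-localization underlying (ii) and (iii). Because $D_{-1}$ acts with weight-dependent coefficients, the twisted-action formula does not reproduce the standard tensor action on the nose; one must introduce and control the rescaling $\beta_{\ell}$, and the resulting intertwining of $D_{2}$ (and, in general, of every $D_{k}$) becomes a binomial identity that has to hold at each weight $\ell$. More essentially, (ii) is the boundary case that cannot be deduced from (i) and (iii): formally it is the forbidden $\eta=0$ instance of (iii), excluded precisely because $D_{\langle D_{-1}\rangle}T(\lambda,\lambda,c,-)$ is the non-standard simple module above rather than $T(\lambda,\lambda,c)$. Getting the description of this localized module correct, and checking that the twist by a noninteger $-\nu$ removes the residue obstruction and lands in the honest module $T(\lambda+\nu,\lambda,c)$, is where the care lies. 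As a consistency check throughout, Lemma~\ref{lem-tw-loc-simple} ensures that each twisted localization here is simple, so any nonzero weight-preserving $\A$-map I construct is automatically an isomorphism.
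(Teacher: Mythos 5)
Your proposal is correct and takes essentially the same route as the paper: an explicit basis of each twisted localization together with a diagonal, weight-matching map whose nonzero coefficients (your $\beta_\ell$, the paper's $c^{\nu+\ell}$ and $D_{-1}(\nu,\ell)$) are forced by intertwining $I_1$ (resp.\ $D_{-1}$) and then verified on the remaining generators; the paper's only shortcut is deducing (iii) from (ii) via the additivity $D_{\langle D_{-1} \rangle}^{\nu_1+\nu_2}M \simeq D_{\langle D_{-1} \rangle}^{\nu_1}D_{\langle D_{-1} \rangle}^{\nu_2}M$ instead of repeating the construction. One small correction: $D_{\langle D_{-1}\rangle}T(\lambda,\lambda,c,-)$ is simple only as a module over the localized algebra $D_{\langle D_{-1}\rangle}U(\A)$, not as an $\A$-module (it contains the image of $T(\lambda,\lambda,c,-)$ as a proper submodule), so your closing appeal to Lemma \ref{lem-tw-loc-simple} for ``automatic'' bijectivity should be dropped --- harmlessly, since your explicit maps with all $\beta_\ell\neq 0$ are bijections by construction.
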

\begin{proof} 
 (i) Note that  if $c \neq 0$, the set $\{ I_1^{\nu + \ell} (x^{\lambda} \otimes v_{\lambda})\; | \; \ell \in {\mathbb Z} \}$ forms a basis of $D_{\langle I_1 \rangle}^{\nu} T (\lambda, \lambda,c, +)$. Then  it is easy to check that  the map 
$$ I_1^{\nu + \ell} (x^{\lambda} \otimes v_{\lambda}) \mapsto  c^{\nu + \ell} x^{\lambda + \nu + \ell} \otimes v_{\lambda}$$
provides an isomorphism $D_{\langle I_1 \rangle}^{\nu} T (\lambda, \lambda, c,+) \simeq T (\lambda + \nu, \lambda,c)$. 

(ii) Note that the set $\{ D_{-1}^{-\nu - \ell -1} (x^{\lambda-1} \otimes v_{\lambda} + T(\lambda, \lambda, c, +))\; | \; \ell \in {\mathbb Z} \}$ forms a basis of $D_{\langle D_{-1} \rangle}^{-\nu} T (\lambda, \lambda, c, -)$. Then it is easy to check that the map 
$$ D_{-1}^{-\nu - \ell -1} (x^{\lambda-1} \otimes v_{\lambda} + T(\lambda, \lambda, c, +)) \mapsto D_{-1}(\nu, \ell)x^{\lambda + \nu + \ell} \otimes v_{\lambda},$$
where $D_{-1}(\nu, \ell) = \nu (\nu - 1)...(\nu - (-\ell -1))$ if $l < 0$ and $D_{-1}(\nu, \ell)  = \frac{1}{(\nu+1)...(\nu+\ell)}$ if $\ell \geq 0$, provides an isomorphism $D_{\langle D_{-1} \rangle}^{-\nu} T (\lambda, \lambda, c, -) \simeq T (\lambda + \nu, \lambda,c)$.

Part (iii) easily follows from (ii) and the additive property of the twisted localization functors: $D_{\langle D_{-1} \rangle}^{\nu_1 + \nu_2} M \simeq D_{\langle D_{-1} \rangle}^{\nu_1} D_{\langle D_{-1} \rangle}^{\nu_2} M$. \end{proof}
\begin{remark}
Note that the isomorphism in  Lemma \ref{lem-iso-loc-a}(ii)  holds even in the case  $(\lambda, c) = (1,0)$, namely when $ T (\lambda, \lambda, c, -)$ and $T (\lambda + \nu, \lambda,c)$ are not simple. Also, the coefficients $D_{-1} (\nu, \ell)$ in the  proof of Lemma \ref{lem-iso-loc-a}(ii) can also be defined in terms of Gamma-functions: $D_{-1}(\nu, \ell)  = \frac{\Gamma (\nu + 1)}{\Gamma (\nu + \ell + 1)}$.
\end{remark}

\subsection{Parabolic subalgebras of $W_2$ and Penkov-Serganova Parabolic Induction Theorem}

To keep the content of the paper short we will avoid the general definition of a parabolic subalgebra of $W_n$. Such a definition in terms of flags of real subspaces can be found in \S1 of \cite{PS}. Alternatively, one can use the general definition of a parabolic set of roots $P $ and then define a parabolic subalgebra $\mathfrak{p}_P$ associated with $P$ following \cite{GY}. The problem is that the root system of $W_n$ is neither symmetric nor finite. 

In what follows we fix $\sigma$ to be the automorphism of $\Delta_{W_2}$ interchanging $\varepsilon_1$ and $\varepsilon_2$. This automorphism naturally defines an automorphism of $\h_{W_2}^*$ , $\h_{W_2}$ and $W_2$. With a slight abuse of notation we will denote all resulting automorphisms by $\sigma$.

We have a natural embedding of $\mathfrak{sl}_{n+1}$ in $W_n$ arising from the infinitesimal action of the group $PSL (n+1)$ on ${\mathbb C}P^n$. In explicit terms, the embedding $\Phi$ is defined by $E_{ij} \mapsto x_i \partial_j$, $1\leq i,j \leq n$,  $E_{0k} \mapsto x_i {\mathcal E}$, $E_{k0} \mapsto -\partial_k$, $1\leq k \leq n$, where ${\mathcal E} = \sum_{k=1}^nx_k\partial_k$. 
With the embedding of $\mathfrak{sl}(n+1)$ in $W_n$ in mind we fix the Cartan subalgebra $\mathfrak{h}$ of $\mathfrak{sl}(n+1)$  to be the one corresponding to $\mathfrak{h}_W$ under the embedding $\Phi$. Again with a slight abuse of notation, the root system of $\mathfrak{sl}_{n+1}$ relative to $\mathfrak h$ will be denoted by $\Delta_{\mathfrak{s}} = \{ \varepsilon_i - \varepsilon_j \; | \; 0 \leq i \neq j \leq n\}$.  Since we will deal with parabolic subalgebras of $W_n$ that are induced from parabolic subalgebras of $\mathfrak{sl}_{n+1}$, we will limit out attention to this case only.

We now recall one of the few equivalent definitions of a parabolic subalgebra of $\mathfrak{sl}_{n+1}$. 
 A \emph{parabolic subset of roots} of $\mathfrak{s} = \mathfrak{sl}_{n+1}$ is a proper subset $P_\mathfrak{s}$ of $\Delta_{\mathfrak{s}} $ for which
$$
\Delta_{\mathfrak{s}}  = P_\mathfrak{s} \cup(-P_\mathfrak{s}) \; \; \; \mbox{and} \; \; 
\alpha, \beta \in P_\mathfrak{s} \; {\text { with }} \; \alpha + \beta \in \Delta_\mathfrak{s} 
\; \; 
{\text { implies }} \; \; \alpha + \beta \in P_\mathfrak{s}.
$$
For a parabolic subset of roots 
$P_\mathfrak{s}$ of $\Delta_{\mathfrak{s}} $, we call $L_\mathfrak{s} := P_\mathfrak{s} \cap (-P_\mathfrak{s})$ 
the {\em{Levi component}} of $P_\mathfrak{s}$, $N_\mathfrak{s}^+ := P_\mathfrak{s} \backslash (-P_\mathfrak{s})$ the 
{\em{nilradical}} of $P_\mathfrak{s}$, and $P_\mathfrak{s} = L_\mathfrak{s} \sqcup N_\mathfrak{s}^+$ the Levi decomposition 
of $P$. We call
$$
\mathfrak{p}_{P_\mathfrak{s}} := \mathfrak{h} \oplus \left( \bigoplus_{\alpha \in P_\mathfrak{s}} \, \mathfrak{s}^\alpha \right).
$$
a parabolic subalgebra of $\mathfrak{s}$ associated with $P_\mathfrak{s}$. 

If $P_{\mathfrak{s}}= L_{\mathfrak{s}} \sqcup N_{\mathfrak{s}}^+ $ is a parabolic subset of roots of $\mathfrak s$, then we call $P= L \sqcup N^+ $ \emph{a parabolic subset of roots of $W_n$ induced from $P_{\mathfrak s}$}, where $L:= {\mathbb Z} L_{\mathfrak{s}} \cap \Delta_W$ and $N^+ := \left( {\mathbb Z}_{\geq 0}P_{\mathfrak s} \cap \Delta_W \right) \setminus L$. If $P_{\mathfrak{s}}= L_{\mathfrak{s}} \sqcup N_{\mathfrak{s}}^+ $ is a parabolic subset of roots of $\mathfrak s$, and $P = L \sqcup N^+$ is the parabolic subset of roots of $W_n$ induced from $P_{\mathfrak s}$, we set $N_{\mathfrak s}^{-} = \Delta_{\mathfrak s} \setminus P_{\mathfrak s}$ and $N^{-} = \Delta_W \setminus P$. The reader is referred  to Lemma 3 in \cite{PS} for a proof of the fact that every parabolic subset of roots of $W_n$ induced from one of $\mathfrak s$ is indeed a parabolic subset of roots of $W_n$. We call $P^- = L \sqcup N^-$ the \emph{opposite to $P$} parabolic subset. Analogously to the case of $\mathfrak{sl}_{n+1}$ we define 
$$
\mathfrak{p}_{P} := \mathfrak{h} \oplus \left( \bigoplus_{\alpha \in P} \, \mathfrak{s}^\alpha \right).
$$
to be the parabolic subalgebra of $W_n$ associated with $P$ (or with $P_{\mathfrak s}$). By $\mathfrak p^-$ we denote the parabolic subalgebra associated with $P^-$ and call it the opposite to $\mathfrak p$ parabolic subalgebra.

If $P_{\mathfrak s} = L_{\mathfrak{s}} \sqcup N_{\mathfrak{s}}^+ $ is a parabolic subset of roots of $\mathfrak s$ for which $L_{\mathfrak{s}}  = \emptyset$, we will call the corresponding subalgebras $\mathfrak{p}_{P_\mathfrak{s}} $ and $\mathfrak{p}_{P}$ \emph{Borel subalgebras} of $\mathfrak{sl}_{n+1}$ and $W_n$, respectively.

Below we list all parabolic subsets of roots of $W_2$ induced from parabolic subsets of roots of $\Delta_{\mathfrak{s}}$ of $\mathfrak{sl} (3)$ together with the corresponding parabolic subalgebras.

\begin{example} \label{ex-par-w2} For a subset $J$ of the real vector space ${\mathbb R} \Delta_W$, let $P(J) = \{\alpha \in \Delta_W \; | \; (\alpha, s) \in {\mathbb R}_{\leq 0} \mbox{ for every } s\in J\}$. Then all parabolical subset of roots of $W_2$ induced by parabolic subsets of roots of $\mathfrak{sl} (3)$ are of the form $P(J)$ for a set $J$ consisting of one or two elements. All possible sets $J$ together with the notation that will be used for the corresponding parabolic subset of roots $P(J)$ and the parabolic subalgebra $\mathfrak p (J) = \mathfrak{p}_{P(J)}$ are listed below.
\begin{itemize}
\item[(i)] $J = \{ \varepsilon_1 - \varepsilon_0\}$, $P(1^+)$, ${\mathfrak p}(1^+)$.

\item[(ii)] $J = \{ \varepsilon_0 - \varepsilon_1\}$, $P(1^-)$, ${\mathfrak p}(1^-)$.

\item[(iii)] $J = \{ \varepsilon_2 - \varepsilon_0\}$, $P(2^+)$, ${\mathfrak p}(2^+)$.

\item[(iv)] $J = \{ \varepsilon_0 - \varepsilon_2\}$, $P(2^-)$, ${\mathfrak p}(2^-)$.

\item[(v)] $J = \{ \varepsilon_1 + \varepsilon_2\}$, $P(12^+)$, ${\mathfrak p}(12^+)$.

\item[(vi)] $J = \{ -\varepsilon_1 - \varepsilon_2\}$, $P(12^-)$, ${\mathfrak p}(12^-)$.

\item[(vii)] $J = \{\varepsilon_1 - \varepsilon_0, \varepsilon_0 - \varepsilon_2\}$, $P(1^+,2^-)$, ${\mathfrak p}(1^+,2^-)$.

\item[(viii)] $J = \{\varepsilon_0 - \varepsilon_1, \varepsilon_2 - \varepsilon_0\}$, $P(1^-,2^+)$, ${\mathfrak p}(1^-,2^+)$.

\item[(ix)] $J = \{\varepsilon_0 - \varepsilon_2, - \varepsilon_1 - \varepsilon_2\}$, $P(2^-,12^-)$, ${\mathfrak p}(2^-,12^-)$.

\item[(x)] $J = \{\varepsilon_1 - \varepsilon_0,  \varepsilon_1  + \varepsilon_2\}$, $P(1^+,12^+)$, ${\mathfrak p}(1^+,12^+)$.

\item[(xi)] $J = \{\varepsilon_0 - \varepsilon_1, - \varepsilon_1 - \varepsilon_2\}$, $P(1^-,12^-)$, ${\mathfrak p}(1^-,12^-)$.

\item[(xii)] $J = \{\varepsilon_2 - \varepsilon_0,  \varepsilon_1  + \varepsilon_2\}$, $P(2^+,12^+)$, ${\mathfrak p}(2^+,12^+)$.

\end{itemize}

\end{example}

\begin{remark}
Note for example that $P(1^+) =  {\mathbb Z}_{\leq 0} \varepsilon_1 + {\mathbb Z} \varepsilon_2$ which seems counterintuitive. This sign change is imposed in order to match the notation of the parabolic subalgebras  with the notation of the corresponding induced modules, see for example Proposition \ref{prop-w2-bnd-nec}.
\end{remark}

The following parabolic induction theorem follows from Lemma 11 in \cite{PS}.
\begin{theorem} \label{th-par-ind}
Let $M$ be a simple weight $W_n$-module with finite weight multiplicities. Then there is a parabolic subalgebra $\mathfrak p = \mathfrak{l} \oplus \mathfrak{n}^+$ of $W_n$ induced from a parabolic subalgebra of $\mathfrak s = \mathfrak{sl}_{n+1}$ and a simple $\mathfrak{p}$-module $S$ with a trivial action of $ \mathfrak{n}^+$ such that $M$ is a quotient of the induced module $U(W_n) \otimes_{U(\mathfrak p)} S$. Moreover, there is $\lambda \in \Supp L$ such that $\lambda + \alpha \notin \Supp M$ for all $\alpha$ in $N^+ = \Delta_{\mathfrak{n}^+}$.
\end{theorem}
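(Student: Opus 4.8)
The plan is to recover $M$ as a quotient of a parabolically induced module by locating an \emph{extremal layer} of $\Supp M$, checking that $\mathfrak n^+$ annihilates it, and then applying Frobenius reciprocity. The geometric input that singles out the parabolic is exactly the content of Lemma 11 of \cite{PS}, so the bulk of my work would be in organizing the reduction to that lemma.

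First I would study $\Supp M \subset \h_W^*$. Since $M$ is simple, all its weights lie in a single coset $\lambda_0 + \Z\Delta_W$ of the root lattice, and by the dichotomy of \S2.2 each $\ad$-nilpotent root vector acts either finitely or injectively on $M$. The Penkov--Serganova analysis of such supports then provides a vector $s \in \R\Delta_W$ for which the pairing $(\,\cdot\,, s)$ is bounded above on $\Supp M$ and whose restriction to $\Delta_{\mathfrak s} = \Delta_{\mathfrak{sl}_{n+1}}$ is generic enough to cut out a parabolic subset of roots $P_{\mathfrak s}$. Following the construction preceding Example \ref{ex-par-w2}, $P_{\mathfrak s}$ induces a parabolic subset $P = L \sqcup N^+$ of $W_n$, with $L$ the roots on which $(\,\cdot\,,s)$ vanishes and $N^+$ the strictly increasing roots, together with the associated parabolic subalgebra $\mathfrak p = \mathfrak l \oplus \mathfrak n^+$, where $\mathfrak n^+$ is spanned by the $W_n$-root spaces for the roots in $N^+$.

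Next, because $(\,\cdot\,,s)$ is bounded above on $\Supp M$ and the support has a top face by the structure just cited, the pairing attains its maximum at some $\lambda \in \Supp M$. For every $\alpha \in N^+$ we then have $(\lambda+\alpha, s) > (\lambda,s)$, hence $\lambda + \alpha \notin \Supp M$; this is the ``moreover'' assertion. Writing $M_{\mathrm{top}} = \bigoplus_{(\mu,s)=(\lambda,s)} M^\mu$ for the extremal layer, the root spaces in $\mathfrak l$ preserve this level set while those in $\mathfrak n^+$ push it past the maximum, so $M_{\mathrm{top}}$ is an $\mathfrak l$-submodule annihilated by $\mathfrak n^+$. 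I would then argue that $S := M_{\mathrm{top}}$ is a \emph{simple} $\mathfrak l$-module: if $0 \neq V \subsetneq M_{\mathrm{top}}$ were an $\mathfrak l$-submodule, then using the PBW decomposition $U(W_n) = U(\mathfrak n^-)U(\mathfrak l)U(\mathfrak n^+)$ together with $\mathfrak n^+ V = 0$ and the fact that $\mathfrak n^-$ strictly lowers the level, the top-level part of $U(W_n)V$ would be exactly $V$; but $U(W_n)V = M$ by simplicity of $M$, forcing $V = M_{\mathrm{top}}$, a contradiction.

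Finally, $S$ is a simple $\mathfrak p$-module on which $\mathfrak n^+$ acts trivially, and the inclusion $S \hookrightarrow M$ is a map of $\mathfrak p$-modules. By Frobenius reciprocity it extends to a nonzero $W_n$-homomorphism $U(W_n)\otimes_{U(\mathfrak p)} S \to M$ whose image $U(W_n)S$ is a nonzero submodule of $M$, hence all of $M$ by simplicity; thus $M$ is a quotient of $U(W_n)\otimes_{U(\mathfrak p)} S$. The only genuinely hard step is the first one: producing the functional $s$ and guaranteeing that its level structure yields a bona fide parabolic subset of $W_n$ induced from $\mathfrak{sl}_{n+1}$, together with the existence of the extremal weight $\lambda$. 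The root system $\Delta_W$ is neither finite nor symmetric, so this is not formal; I would import it directly from Lemma 11 of \cite{PS} rather than reprove it.
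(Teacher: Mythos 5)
Your proposal is correct and takes essentially the same route as the paper: the paper gives no independent argument at all, simply asserting that the theorem follows from Lemma 11 of \cite{PS}, which is exactly the geometric input you defer to. The extremal-layer, simplicity-of-the-top-level, and Frobenius-reciprocity scaffolding you supply is the standard (and sound) unwinding of that citation, which the paper leaves implicit.
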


\subsection{Supports of $W_2$-modules}

In what follows we describe all possible supports of simple weight $W_2$-modules with finite weight mutliplicities, see Example 2 in \cite{PS}.

\begin{proposition} \label{prop-w2-par}
All possible supports of simple weight $W_2$-modules with finite weight multiplicities are exactly in  one of the following forms: 
\begin{itemize}
\item[(i)--(xii)] $\lambda + P(J)$ for $J$ being one in the list {\rm (i)-(xii)} of Example \ref{ex-par-w2}, 
\item[(xiii)] $\left(\lambda + P(2^-,12^-) \right) \cap \left(\sigma (\lambda) + P(1^-,12^-) \right) $,
\item[(xiv)] $\left(\lambda + P(1^+,12^+) \right) \cap \left(\sigma (\lambda) + P(2^+,12^+) \right) $,
\item[(xv)] $\lambda + {\mathbb Z}^2$,
\item[(xvi)] $\{ 0\}$,
\end{itemize}
with the conditions on $\lambda$ as follows: no restrictions on $\lambda$ in cases {\rm (i)-(vi), (xv)}; $\lambda \neq 0$ in cases {\rm (vii)-(viii)}; $\lambda_2 - \lambda_1 \notin {\mathbb Z}_{\geq 0}$ in cases {\rm (ix)-(x)}; $\lambda_1 - \lambda_2 \notin {\mathbb Z}_{\geq 0}$ in cases {\rm (xi)-(xii)}; $\lambda_1 - \lambda_2 \in {\mathbb Z}_{\geq 0}$, $\lambda \neq 0$ in cases {\rm (xiii)-(xiv)}.
\end{proposition}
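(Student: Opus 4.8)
The plan is to read off the list of supports from the Penkov--Serganova parabolic induction theorem (Theorem \ref{th-par-ind}) applied to the explicit inventory of parabolic subalgebras of $W_2$ in Example \ref{ex-par-w2}. Let $M$ be a simple weight $W_2$-module with finite weight multiplicities. By Theorem \ref{th-par-ind} there is a parabolic $\mathfrak p = \mathfrak l \oplus \mathfrak n^+$ induced from a parabolic of $\mathfrak s = \mathfrak{sl}_3$, together with a simple $\mathfrak p$-module $S$ on which $\mathfrak n^+$ acts trivially, so that $M$ is a quotient of $U(W_2) \otimes_{U(\mathfrak p)} S$. Writing $N^- = \Delta_W \setminus P$ and using the PBW identification $U(W_2)\otimes_{U(\mathfrak p)}S \simeq U(\mathfrak n^-)\otimes S$ as $\h_W$-modules, we obtain the containment
$$\Supp M \subseteq \Supp S + \mathbb{Z}_{\geq 0} N^-.$$
The extremality clause of Theorem \ref{th-par-ind} furnishes a weight $\lambda$ with $\lambda + \alpha \notin \Supp M$ for every $\alpha \in N^+$, which forces the translate of the $N^-$-cone to be sharp in the $N^+$-directions; the resulting region is precisely the set $P(J)$ attached to $\mathfrak p$ in Example \ref{ex-par-w2}. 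Thus everything reduces to determining, parabolic by parabolic, the possible supports of the simple Levi-module $S$ and combining each with the nilradical cone.

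For the six parabolics with $|J| = 2$ (cases (vii)--(xii)) one checks that $L = \mathbb Z L_{\mathfrak s}\cap \Delta_W = \emptyset$, so these are Borel subalgebras with $\mathfrak l = \h_W$; a simple weight $\h_W$-module of finite multiplicity is one-dimensional, hence $\Supp S$ is a single point and $M$ is of highest-weight type with $\Supp M = \lambda + P(J)$. Here the mild hypotheses on $\lambda$ are imposed exactly to discard the degenerate quotients: the exclusion $\lambda \neq 0$ in (vii)--(viii) removes the one-dimensional trivial module, while the conditions $\lambda_2 - \lambda_1 \notin \mathbb Z_{\geq 0}$ (resp. $\lambda_1 - \lambda_2 \notin \mathbb Z_{\geq 0}$) in (ix)--(x) (resp. (xi)--(xii)) are what prevents the highest-weight module from collapsing onto a smaller support belonging to another case.

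For the six parabolics with $|J| = 1$ (cases (i)--(vi)) the Levi component is no longer toral: one computes $L = \{\alpha\in\Delta_W : (\alpha,s) = 0 \text{ for all } s\in J\}$, a copy of the root system of a Cartan-type subalgebra $\mathfrak l \simeq W_1$ or $\mathfrak l \simeq \mathcal A = \Der\mathbb C[x]\ltimes\mathbb C[x]$ acting along one coordinate direction. Here the support of the simple bounded $\mathfrak l$-module $S$ is governed by Mathieu's $W_1$-classification (Theorem \ref{th-class-w1}) together with Propositions \ref{prop-tens-w1-mod}--\ref{prop-tens-a1-mod}, and there is a dichotomy. If $S$ is dense (tensor-type), its support is a full line in the Levi direction and $\Supp M = \lambda + P(J)$ is the half-lattice cone, e.g.\ $\lambda + (\mathbb Z_{\leq 0}\varepsilon_1 + \mathbb Z \varepsilon_2)$ in case (i); this yields (i)--(vi). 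If instead $S$ is of highest- or lowest-weight type, its support is a half-line, and combining it with the nilradical cone yields a region expressible as the intersection of a cone with its $\sigma$-conjugate, producing the two intersected supports (xiii)--(xiv); the dominance conditions $\lambda_1 - \lambda_2 \in \mathbb Z_{\geq 0}$ and $\lambda \neq 0$ are exactly those guaranteeing the existence of such a half-line simple $\mathcal A$-module with the prescribed extremal weight. The two remaining shapes are the extreme cases: (xv) $\lambda + \mathbb Z^2$ is the cuspidal situation in which $M$ already has dense support, so no proper parabolic bound survives, and (xvi) $\{0\}$ is the trivial module.

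The step I expect to be the main obstacle is the analysis behind (xiii)--(xiv): one must decide precisely when the simple bounded Levi-module has half-line rather than full-line support, and then verify that the sum of the half-line support with the nilradical cone coincides with the stated intersection of two Borel cones, with exactly the integrality and nonvanishing conditions on $\lambda$. This rests on the bounded representation theory of $\mathcal A$ --- the Levi of the relevant maximal parabolics --- which is why the twisted-localization isomorphisms of Lemma \ref{lem-iso-loc-a} and the structure results of Propositions \ref{prop-tens-w1-mod} and \ref{prop-tens-a1-mod} are the essential inputs, both for ruling out every support outside the listed sixteen and for fixing the conditions on $\lambda$.
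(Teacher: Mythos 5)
The paper does not prove this proposition at all: it is quoted from Example~2 of \cite{PS}, and the derivation there rests on Penkov--Serganova's analysis of supports of simple weight modules (their ``shadow''/cone decomposition), not merely on the parabolic induction theorem. Your proposal must therefore stand on its own, and its central step fails. Theorem \ref{th-par-ind} gives only the containment $\Supp M \subseteq \Supp S + \Z_{\geq 0}N^-$, and this region is in general strictly larger than the sets listed in the proposition, so the support cannot be ``read off'' from it. Concretely, for the Borel case $\mathfrak p(1^+,2^-)$ one has $N^- = \Delta_W \setminus P(1^+,2^-) = \{\alpha \in \Delta_W : \alpha_1 \geq 1\} \cup \{(\alpha_1,-1) : \alpha_1 \geq 0\}$, so $\Z_{\geq 0}N^-$ is a full open half-plane together with a half-line --- strictly larger than any quadrant $\lambda + P(J)$, and the extremality clause excludes nothing beyond what this bound already excludes. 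Thus your assertion that ``the resulting region is precisely the set $P(J)$'' is false exactly where the work lies: one must show that the support of the \emph{simple quotient} collapses to a quadrant (or to a truncated region, or to $\{0\}$), and determine for which $\lambda$ each of these happens. Those dichotomies are the entire content of the conditions on $\lambda$, which you assert (``are exactly those...'', ``prevents collapsing'') rather than derive. Symmetrically, in the half-plane cases (i)--(vi) you never establish the lower bound, i.e.\ that the support of the simple quotient fills the whole half-plane rather than, say, a line plus a smaller cone; ruling this out needs the convexity-type results of \cite{PS} or explicit knowledge of the simple quotients.

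A second concrete error is your derivation of cases (xiii)--(xiv). The sum of a half-line (support of a highest-weight Levi module over a maximal parabolic) with the nilradical cone is a half-plane union a half-line, not an intersection of two cones, so the computation you sketch cannot produce these shapes. In fact (xiii)--(xiv) arise from the \emph{Borel-type} parabolics (ix)--(xii): when the highest weight satisfies $\lambda_1 - \lambda_2 \in \Z_{\geq 0}$ the simple highest weight module is integrable along the $\varepsilon_1 - \varepsilon_2$ root direction, and the finite $\mathfrak{sl}_2$-strings truncate the cone to the stated intersection --- e.g.\ the support of the simple module $T(\lambda,\lambda,(1^+,2^+))$ with $\lambda_1-\lambda_2 \in \Z_{\geq 0}$ is precisely $\bigl(\lambda + P(2^-,12^-)\bigr) \cap \bigl(\sigma(\lambda) + P(1^-,12^-)\bigr)$, as one reads off from $\ch T(\lambda,\lambda,(1^+,2^+)) = \ch L_{\mathfrak{gl}}(\lambda)\,(1-e^{\varepsilon_1})^{-1}(1-e^{\varepsilon_2})^{-1}$ and $\dim L_{\mathfrak{gl}}(\lambda) < \infty$ (cf.\ Proposition \ref{prop-tensor-simple-w2}). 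Finally, a smaller slip: the Levi components of $\mathfrak p(12^+)$ and $\mathfrak p(12^-)$ are $\mathfrak a \simeq \mathfrak{gl}_2$, not $W_1$ or $\A$; only the parabolics $\mathfrak p(1^\pm)$, $\mathfrak p(2^\pm)$ have Levi isomorphic to $\A$ (Lemma \ref{lem-levi-a}), so your case (i)--(vi) analysis conflates two genuinely different Levi types.
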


A simple weight module $M$ of type (xv), i.e. such that $\Supp M = \lambda + \Z^2$, will be called a \emph{dense module}.

\section{Classification of simple bounded $\A$-Modules}

We start with a general property of the bounded $\A_n$-modules.
\begin{proposition} \label{prop-fin-len}
Every bounded $\A_n$-module  (and, hence  $W_n$-module) whose support is a subset of $\lambda + {\mathbb Z}^n$ for some $\lambda$ has finite length. 
\end{proposition}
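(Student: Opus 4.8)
The plan is to prove finite length by exhibiting a bound on the length of any chain of submodules, using boundedness together with the fact that the support lies in a single coset $\lambda + \Z^n$. Let me set up the key quantitative invariant. Since $M$ is bounded, there is a uniform bound $N$ with $\dim M^{\mu} < N$ for all weights $\mu$, and every weight of $M$ lies in the fixed coset $\lambda + \Z^n$. The natural invariant to track is the total ``weight-multiplicity content'' of $M$ restricted to a suitable finite window, but since the support may be infinite I instead propose to bound the length via a counting argument on a single generic weight space.

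The main idea I would carry out is the following. First I would locate a weight $\mu_0 \in \Supp M$ that is \emph{generic} in the sense that all the root vectors acting from $\mu_0$ behave as injectively or as finitely as possible, so that $M^{\mu_0}$ controls a large part of the module; concretely, one wants the localization maps (via the $\ad$-nilpotent weight elements $\partial_i$, or $D_{-1}, I_j$ in the $\A$ case listed after Lemma \ref{lem-tw-loc-simple}) to be injective on $M^{\mu_0}$. The key structural fact is that for a coset-supported module the ``shift'' operators coming from root vectors connect weight spaces across the lattice $\Z^n$, so that a simple subquotient is determined by a small amount of data sitting over $M^{\mu_0}$. I would then argue that in any composition series $0 = M_0 \subset M_1 \subset \cdots$, each successive quotient $M_i/M_{i-1}$ is a nonzero simple bounded module, hence contributes at least one dimension to the weight space $(M_i/M_{i-1})^{\mu_0}$ for a suitable fixed $\mu_0$ in its support; the delicate point is that different simple subquotients have supports that all meet a common window.

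The cleanest route, which I would pursue, is to reduce to a single weight space. Choose any weight $\mu_0$ and consider the associative algebra $\mathcal{U}^0 = U(\A_n)^0$ of weight-zero elements acting on the finite-dimensional space $M^{\mu_0}$. For a chain of submodules $0 = M_0 \subset M_1 \subset \cdots \subset M_r = M$ one gets a chain of $\mathcal{U}^0$-submodules $M_0^{\mu_0} \subseteq M_1^{\mu_0} \subseteq \cdots \subseteq M_r^{\mu_0}$ of the $N'$-dimensional space $M^{\mu_0}$ (where $N' = \dim M^{\mu_0} < N$). If one can show that \emph{every} nonzero subquotient in the chain has $\mu_0$ in its support — equivalently that each inclusion $M_{i-1}^{\mu_0} \subsetneq M_i^{\mu_0}$ is strict whenever $M_{i-1} \subsetneq M_i$ — then the length $r$ is at most $\dim M^{\mu_0} < N$, giving finite length at once. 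Thus the entire argument hinges on choosing $\mu_0$ so that it lies in the support of every subquotient.

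\textbf{The main obstacle} will be exactly this last requirement: a priori a simple subquotient could have support avoiding a prescribed $\mu_0$, so no single weight can be expected to detect all composition factors. To get around this I would argue that for a coset-supported bounded module there are only finitely many ``support shapes'' available (these are governed by the Penkov--Serganova description, Proposition \ref{prop-w2-par}, whose supports are translates of finitely many cones $P(J)$), and on the common dense part $\lambda + \Z^n$ any two of these overlap; more robustly, I would pick $\mu_0$ deep in the interior so that it lies in $\mu + \Z^n$ for the relevant shift structure and then show, using that some $\partial_i$ (resp. $D_{-1}$ or $I_j$) acts injectively, that the support of any nonzero subquotient must contain $\mu_0$. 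The technical heart is therefore verifying that injectivity of a fixed $\ad$-nilpotent weight element propagates across the whole coset, forcing full-coset support on every nonzero subquotient. Once that propagation is established the counting in the previous paragraph closes the argument, and the bound on the length is simply $N$.
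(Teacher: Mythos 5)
Your proposal has a genuine gap, and it sits exactly where you located the ``technical heart.'' The entire argument hinges on finding a single weight $\mu_0$ lying in the support of every nonzero subquotient, so that each strict inclusion $M_{i-1}\subsetneq M_i$ forces $M_{i-1}^{\mu_0}\subsetneq M_i^{\mu_0}$. No such $\mu_0$ exists in general: take $n=1$ and $M=T(0,0)=\C[x^{\pm 1}]$, which is bounded of degree $1$ with support exactly $0+\Z$. Its composition factors are the trivial module $\C$, the highest-weight module $L(1,+)$, and $T(0,0,-)$, with supports $\{0\}$, $\Z_{\geq 1}$, and $\Z_{\leq -1}$ respectively --- pairwise disjoint, so no weight detects more than one factor. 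The same example defeats your proposed repair: it is false that injectivity of a fixed $\ad$-nilpotent weight element ``propagates'' to force full-coset support on every nonzero subquotient, both because injectivity does not pass to quotients and because coset-supported bounded modules genuinely do have subquotients of highest-weight type with half-line (or point) supports. So the counting bound $r\leq\dim M^{\mu_0}$ cannot be salvaged by a single weight space; any correct version of this strategy must aggregate over several weight spaces chosen according to the possible support shapes, which is essentially the content of the nontrivial lemma you would need to prove from scratch.

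For comparison, the paper's proof avoids all of this with a one-line reduction: $\A_n$ contains a copy of $\mathfrak{sl}_{n+1}$ sharing the Cartan subalgebra $\h_{W_n}$, so the restriction of $M$ to $\mathfrak{sl}_{n+1}$ is again a bounded weight module with support in a single coset, and Mathieu's Lemma 3.3 in \cite{M} says such $\mathfrak{sl}_{n+1}$-modules have finite length. Since every chain of $\A_n$-submodules is in particular a chain of $\mathfrak{sl}_{n+1}$-submodules, the finite length transfers immediately. If you want to keep a self-contained weight-space-counting argument, you would in effect be reproving Mathieu's lemma, which requires handling the disjoint-support phenomenon above rather than assuming it away.
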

\begin{proof} This follows from the fact that $\A_n$ has a subalgebra isomorphic to $\mathfrak{sl}_{n+1}$ and that the statement holds for bounded $\mathfrak{sl}_{n+1}$-modules with the same support property, see Lemma 3.3 in \cite{M}.
\end{proof}

In the rest of this section we work with $n=1$, i.e. with $\A$.
\begin{lemma} \label{lem-loc-fin}
Let $M$ be a simple weight $\A$-module of central charge $c$. If $D_{-1}$ acts finitely on $M$, then either $M \simeq  L (\lambda , c, +)$ for some $\lambda, c$,  $(\lambda, c) \neq (0,0)$, or $M \simeq  L (0,0)$.
\end{lemma}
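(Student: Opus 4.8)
The plan is to show that the hypothesis forces $M$ to be a lowest weight module, and then to pin down its isomorphism class by the uniqueness of simple lowest weight modules together with the already-established simplicity of the tensor modules $T(\lambda,\lambda,c,\pm)$. I would avoid invoking the full $W_1$-classification and instead work intrinsically with the triangular decomposition $\A=\A^+\oplus\A^0\oplus\A^-$, $\A^-=\Span\{D_{-1}\}$.

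First I would record that $D_{-1}$ is $\ad$-nilpotent on $\A$: from $[D_{-1},D_i]=(i+1)D_{i-1}$ and $[D_{-1},I_j]=jI_{j-1}$ one sees that $\ad(D_{-1})$ lowers the index and annihilates $D_{-1}$ and $I_0$, hence is locally nilpotent. Since $D_{-1}$ acts finitely on $M$, for any nonzero weight vector $w$ I can take the largest $k$ with $D_{-1}^k w\neq 0$ and set $v:=D_{-1}^k w$; then $v$ is a weight vector with $D_{-1}v=0$, say $D_0 v=\lambda v$ and $I_0 v=cv$ (the central charge). By PBW, $U(\A)v=U(\A^+)U(\A^0)U(\A^-)v=U(\A^+)v$, because $D_{-1}v=0$ and $v$ is an $\A^0$-eigenvector. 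Simplicity gives $M=U(\A^+)v$, so $\Supp M\subseteq\lambda+\Z_{\geq0}$ and $\lambda$ is the unique minimal weight.

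Next I would compare $M$ with the universal lowest weight module. Let $\mathfrak q=\A^-\oplus\A^0$ and let $\C_{\lambda,c}$ be the one-dimensional $\mathfrak q$-module on which $D_{-1}$ acts by $0$, $D_0$ by $\lambda$, and $I_0$ by $c$; set $M(\lambda,c):=U(\A)\otimes_{U(\mathfrak q)}\C_{\lambda,c}$. The universal property yields a surjection $M(\lambda,c)\twoheadrightarrow M$. Since $M(\lambda,c)^{\lambda}$ is one-dimensional and generates the whole module, every proper submodule meets $M(\lambda,c)^\lambda$ trivially; hence $M(\lambda,c)$ has a unique maximal submodule and a unique simple quotient $L(\lambda,c)$, and $M\cong L(\lambda,c)$ with $M^\lambda=\C v$. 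Thus $M$ is completely determined by the pair $(\lambda,c)$, and it remains only to match $L(\lambda,c)$ with a module from the asserted list.

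Finally I would identify $L(\lambda,c)$ using the structure results already in hand. The module $T(\lambda,\lambda,c,+)$ has generator $x^{\lambda}\otimes v_\lambda$ annihilated by $D_{-1}$, of weight $\lambda$ and central charge $c$; so whenever it is simple it is a simple lowest weight module with exactly these data, hence isomorphic to $L(\lambda,c)$. For $c\neq0$ it is simple for every $\lambda$ by Proposition \ref{prop-tens-a1-mod}(iii), and for $c=0$ it is simple exactly when $\lambda\neq0$ by the $c=0$ form of Proposition \ref{prop-tens-w1-mod}(iii); in both cases $M\cong T(\lambda,\lambda,c,+)=L(\lambda,c,+)$. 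The only remaining case is $(\lambda,c)=(0,0)$, where $T(0,0,0,+)$ is reducible: here the trivial module $\C$ is itself a simple lowest weight module of lowest weight $0$ and central charge $0$, so by uniqueness $L(0,0)\cong\C$ and $M\cong L(0,0)$. I expect the only genuinely delicate point to be this degenerate case: one must confirm that the simple quotient of $M(0,0)$ is the trivial module rather than $T(1,1,0,+)$, which holds precisely because $\C$ realizes the minimal weight $0$ whereas $T(1,1,0,+)$ has minimal weight $1$.
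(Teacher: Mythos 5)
Your proof is correct and takes essentially the same route as the paper's: both produce a weight vector annihilated by $D_{-1}$, realize $M$ as the unique simple quotient of the induced module $U(\A)\otimes_{U(\A^-\oplus\A^0)}\C_{(\lambda,c)}$, and identify that quotient with $L(\lambda,c,+)$ or $L(0,0)$ via the known simplicity of $T(\lambda,\lambda,c,+)$. Your write-up simply makes explicit the details the paper leaves implicit (existence of the primitive vector, uniqueness of the maximal submodule, and the degenerate case $(\lambda,c)=(0,0)$).
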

\begin{proof} Let $v \in M^{\lambda}$ be such that $D_{-1} v = 0$. Then $M = U(\A)v $ is an $\A^-$-highest weight module. Thus $U( \A) v$ is the unique simple  quotient of the induced module\\ $U(\A) \otimes _{U(\A ^- \oplus \A^0)} \C_{(\lambda, c)}$, where $\C_{(\lambda, c)}$ is the 1-dimensional $\A^0$-module of weight $(\lambda,c)$ on which $\A^-$ acts trivially. However, we know that $L(\lambda , c, +)$ and $L (0 , 0)$ are  such simple highest weight modules. 
\end{proof}

\begin{theorem} \label{th-bounded-a}
Let $M$ be a  simple bounded $\A$-module of central charge $c$. If $c = 0$, then $M$ is a simple bounded $W_1$-module, i.e. it is isomorphic to one of the modules listed in Theorem \ref{th-class-w1} with trivial action of $I_k$, $k\geq 0$. If $c\neq 0$, then  $M$ is isomorphic to one of the following:  $T (\nu , \lambda, c)$ ($\lambda - \nu  \notin {\mathbb Z}$), $L(\lambda, c, +)$, $L (\lambda, c, -)$. The only isomorphisms of the  listed $\A$-modules  for $c\neq 0$ are: $ T (\nu , \lambda, c) \simeq  T(\nu +n , \lambda, c)$, $n \in {\mathbb Z}$, for $\nu \notin \Z$.
 \end{theorem}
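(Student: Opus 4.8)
The goal is to classify the simple bounded $\A$-modules. The key case split is on the central charge $c$, and within each, on whether the ad-nilpotent element $D_{-1}$ acts finitely or injectively on $M$. The strategy is to use the dichotomy from the "Injective and finite actions" subsection: since $M$ is simple and $D_{-1}$ is ad-nilpotent, $D_{-1}$ acts either finitely or injectively on $M$. If it acts finitely, Lemma \ref{lem-loc-fin} already pins down $M$ as a highest weight module $L(\lambda,c,+)$ or $L(0,0)$. So the substantive work is the injective case, which I would handle via twisted localization.

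Let me sketch the injective case. Suppose $D_{-1}$ acts injectively on $M$. Then $M$ embeds in its localization $D_{\langle D_{-1}\rangle}M$, which is a simple weight module over the localized algebra with support an entire coset $\nu+\Z$. I would first argue that, after possibly applying a twist $\Phi_z$, the support contains a weight vector killed by $D_{-1}$ in the localized/twisted module — that is, I would use Lemma \ref{lem-tw-loc-simple} to relate $M$ to a twisted localization of a highest weight module. Concretely, pick a simple submodule $N$ of $D^z_{\langle D_{-1}\rangle}M$ for a suitable $z$ chosen so that $N$ has a $D_{-1}$-finite vector; by Lemma \ref{lem-loc-fin}, $N$ is then one of $L(\lambda,c,\pm)$ (or $L(0,0)$). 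Lemma \ref{lem-tw-loc-simple} then gives $M\simeq D^{-z}_{\langle D_{-1}\rangle}N$ (when $D_{-1}$ acts bijectively), and Lemma \ref{lem-iso-loc-a} computes this explicit twisted localization as a tensor module $T(\nu,\lambda,c)$ with $\lambda-\nu\notin\Z$. One must also separately treat the element $I_1$ when $c\neq 0$: if $I_1$ acts finitely, $M$ is a highest weight module and Lemma \ref{lem-iso-loc-a}(i) realizes the localization as a tensor module; if $D_{-1}$ acts finitely, part (ii) applies. I would organize the argument so that every simple bounded $M$ with full-coset support is obtained from one of the highest weight building blocks by twisted localization, and then read off the tensor-module isomorphism type from Lemma \ref{lem-iso-loc-a}.

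For the $c=0$ reduction, when $c=0$ the center $I_0$ acts by zero; I would check that the ideal generated by the $I_k$ ($k\geq 0$) acts trivially on a simple module of central charge $0$ (using that $[D_i,I_j]=jI_{i+j}$ and simplicity force the $I_k$-action to be zero when $I_0$ is zero), so that $M$ factors through $\A/\langle I_k\rangle\simeq W_1$. The classification then reduces to Theorem \ref{th-class-w1}. The uniqueness/isomorphism statement for $c\neq 0$, namely $T(\nu,\lambda,c)\simeq T(\nu+n,\lambda,c)$ and no others, follows from Proposition \ref{prop-tens-a1-mod}(i) together with the fact that the highest weight modules $L(\lambda,c,\pm)$ are not isomorphic to any $T(\nu,\lambda,c)$ with $\lambda-\nu\notin\Z$ (they are distinguished by the finite versus injective action of $D_{-1}$).

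\textbf{Main obstacle.} The hard part is the injective case: showing that an arbitrary simple bounded $\A$-module on which $D_{-1}$ acts injectively must actually arise as a twisted localization of a highest weight module, rather than being some exotic object. The delicate point is choosing the twist parameter $z$ so that the twisted localized module $D^z_{\langle D_{-1}\rangle}M$ acquires a $D_{-1}$-finite (hence highest weight) simple submodule — this requires controlling where the support sits modulo $\Z$ and ensuring that after twisting by a suitable $z$ the weight $\lambda$ becomes one at which a highest weight vector exists. Verifying that boundedness is preserved and that the resulting highest weight module is the degree-one building block $L(\lambda,c,\pm)$ (so that the localization is exactly a rank-one tensor module, not something larger) is where the boundedness hypothesis is essential and is the crux of the argument.
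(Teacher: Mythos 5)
Your overall skeleton (the finite/injective dichotomy for $D_{-1}$, Lemma \ref{lem-loc-fin} for the finite case, twisted localization back to a highest weight building block in the injective case, reduction to $W_1$ when $c=0$) matches the paper's strategy, but the mechanism you propose for the injective case --- which you correctly identify as the crux --- cannot work. You want to choose $z$ so that $D^z_{\langle D_{-1}\rangle}M$ has a simple submodule $N$ containing a $D_{-1}$-finite vector, and then apply Lemma \ref{lem-loc-fin} to $N$. No such $z$ exists: localization at $D_{-1}$ makes $D_{-1}$ invertible, and the twisting automorphism fixes $D_{-1}$ (since $\ad(D_{-1})(D_{-1})=0$), so $D_{-1}$ acts bijectively on $D^z_{\langle D_{-1}\rangle}M$ for \emph{every} $z$, and no nonzero vector there is $D_{-1}$-finite. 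The paper resolves this by bringing in the second generator $I_1$ and splitting on its behavior, which your sketch never genuinely uses. If $I_1$ acts finitely (and $D_{-1}$ injectively), then $M\simeq L(\lambda,c,-)$ is a terminal building block, identified not by localization but by a Heisenberg-subalgebra argument (Lemma \ref{lem-1}: restricted to $\Span\{D_{-1},I_0,I_1\}$, a bounded module has only finitely many simple subquotients, all isomorphic to $\C[D_{-1}]$, forcing the support to be bounded on one side); your sketch has no argument for this case. If both $D_{-1}$ and $I_1$ act injectively, the paper localizes at $I_1$, not at $D_{-1}$: the computation $D_{-1}(I_1^{-\nu}v)=I_1^{-\nu-1}\left((\alpha+\nu c)v\right)$, where $I_1D_{-1}v=\alpha v$ on a weight space, shows that the twist $\nu=-\alpha/c$ produces a $D_{-1}$-primitive vector, and only then do Proposition \ref{prop-fin-len}, Lemma \ref{lem-tw-loc-simple}, Lemma \ref{lem-loc-fin}, and Lemma \ref{lem-iso-loc-a}(i) give $M\simeq T(\lambda+\nu,\lambda,c)$. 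Note that solving for $\nu$ requires $c\neq 0$. (A $D_{-1}$-localization could be made to work, but then the primitive vectors one produces are $I_1$-primitive, the building block is $L(\lambda,c,-)$ identified via Lemma \ref{lem-1}, and Lemma \ref{lem-iso-loc-a}(ii) is the relevant isomorphism; in either routing, $I_1$ and the hypothesis $c\neq 0$ enter essentially.)

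The $c=0$ case is also a genuine gap, not a formality. Your claim that simplicity together with $I_0=0$ forces $I_k=0$ for all $k$ is exactly the content of the paper's Lemma \ref{lem-3}, whose proof is not soft: it uses boundedness through a trace argument ($S=D_{-1}^2I_2$ and $T=D_{-1}I_1$ satisfy $[T,S]=2T^2$, so high powers of $T$ are traceless on a finite-dimensional weight space, whence $T$ and then $I_1$ are nilpotent), followed by the identity $0=D_{k-1}(I_1^{N_0}(v_0))=I_1^{N_0}(D_{k-1}(v_0))+N_0I_kI_1^{N_0-1}(v_0)$ to kill all $I_k$ on a nonzero vector, and only at the end simplicity. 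A bracket relation plus ``simplicity'' alone does not yield this. Two smaller inaccuracies: Lemma \ref{lem-loc-fin} produces only $L(\lambda,c,+)$ and $L(0,0)$, never $L(\lambda,c,-)$; and $L(\lambda,c,-)$ is not distinguished from $T(\nu,\lambda,c)$ by injectivity of $D_{-1}$ (both are $D_{-1}$-injective) but rather by their supports (a half-line versus all of $\nu+\Z$).
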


\begin{proof} By Lemma \ref{lem-loc-fin} we know that the result holds if $D_{-1}$ acts  finitely on $M$. So, for the rest of the proof,  we can assume that $D_{-1}$ acts injectively on $M$. We split the proof in two parts depending on the central charge $c$. In all statements we assume that $M$ is a simple bounded $\A$-module of central charge $c$.

\bigskip
\noindent{\it Case 1: Nonzero central charge, i.e. $c \neq 0$.} 

We split this case into two subclasses depending on the action of $I_1$ on $M$.

\begin{lemma} \label{lem-1} If $c \neq 0$,  $D_{-1}$ acts injectively on $M$, and $I_1$ acts finitely on $M$, then $M \simeq L(\lambda , c, -)$ for some $\lambda$.
\end{lemma}

\textit{Proof of Lemma \ref{lem-1}.} Let $\mathfrak a = \Span \{ D_{-1}, I_0, I_1\}$. Note that $\mathfrak a$  is a Lie subalgebra of $\A$ which is isomorphic to the three-dimensional Heisenberg Lie algebra. Furthermore, each weight space $M^{\lambda}$, $\lambda \in \h_{\A}^*$, is $I_1D_{-1}$-invariant, so $M$ considered as $\mathfrak a$-module is a generalized weight $(\mathfrak a, \h_{\mathfrak a})$-module for $ \h_{\mathfrak a} = \Span \{ I_1D_{-1}\}.$

The classification of simple generalized weight $(\mathfrak a, \h_{\mathfrak a})$-modules with nonzero central charge $c$ (equivalently, generalized weight modules of the Weyl algebra $U(\mathfrak a)/(I_0 - c)$) on which $I_1$ acts finitely is well-known. All such modules are simple weight $(\mathfrak a, \h_{\mathfrak a})$-modules  and are isomorphic to the module $\C[D_{-1}]$, such that $D_{-1} (D_{-1}^k) = D_{-1}^{k+1}$, $I_0 (D_{-1}^k)= c D_{-1}^k $, and $I_1 (D_{-1}^k) = - c k D_{-1}^{k-1}$ (see for example \S2 in \cite{GS1}). Note that if $c=0$ then we should add the trivial module ${\mathbb C}$ in that list, but this case is addressed in Case 2 below.

Let $d$ be the degree of $M$. Looking at the $\A$-support of $M$ we see that $M$ can not have more than $d$  simple ${\mathfrak a} $-subquotients. Indeed, if the converse is true, all such subquotients will be isomorphic as ${\mathfrak a} $-modules to $\C[D_{-1}]$, and then we can easily find an $\A$-weight space of $M$ of dimension bigger than $d$. In particular, $M$ has finitely many simple ${\mathfrak a}$-suqbquotients $M_i$ and $M_i = \Span \{ D_{-1}^k m_i \; | \; k \geq 0\}$ for some $m_i \in M$. Hence the $\A$ support of $M$ is bounded from the right, i.e. there is $\lambda \in \Supp M$ such that  $\mbox{Supp} M \subset \lambda + {\mathbb Z}_{\leq 0}$. Therefore $M$ is a simple $\A^+$-highest weight module whose highest weight is $\lambda$, that is $M \simeq L(\lambda , c, -)$. 

\medskip

\begin{lemma} \label{lem-2} Let $c \neq 0$ and  let both $D_{-1}$ and $I_1$ act injectively on $M$. 
\begin{itemize}
\item[(i)]There are $\nu \in \C$ and a simple $\A$-module $N$ on which $D_{-1}$ acts finitely, such that $M \simeq D_{\langle I_1 \rangle}^\nu N$.
\item[(ii)]  $M \simeq T (\nu, \lambda , c)$ for some $\lambda \in \C$.
\end{itemize}
\end{lemma}

\textit{Proof of Lemma \ref{lem-2}.}  First note that since $D_{-1}$ and $I_1$ act injectively on $M$, then $I_1$ acts bijectively on $M$. In particular, $D_{\langle I_1 \rangle} M \simeq M$. Now consider  $D_{\langle I_1 \rangle}^{-\nu} M$, for any $\nu \in \C$. Let $\lambda \in \mbox{Supp} M$. Then  $I_1 D_{-1} |_{M^\lambda}$  is an endomorphism on the finite-dimensional vector space $M^\lambda$. Let $\alpha$ be an eigenvalue of this endomorphism and let $I_1 D_{-1}v = \alpha v$ for $v \in M^{\lambda}$. Then
\begin{align*}
D_{-1} (I_1 ^{-\nu} v) = & I_1^{-\nu} \left ( \sum _{i \geq 0} \binom{\nu}{i}  \left (\ad I_1\right)^i (D_{-1})I_1^{-i} (v) \right ) \\
= & I_1 ^{-\nu } \left ((D_{-1} + \nu I_0 I_{1}^{-1}) (v)  \right ) \\
= & I_1 ^{-\nu -1 } \left ((\alpha+ \nu c ) v\right ) 
\end{align*} We first note that since both $D_{-1}$ and $I_1$ act injectively on $M$ and the weight space of $M$ are finite dimensional, then $I_1$ (and $D_{-1}$) act bijectively on $M$, hence $M \simeq D_{\langle I_1 \rangle} M$. 
If $\nu = - \frac{\alpha }{c}$, then $D_{-1} (I_1 ^\nu m) = 0 $. The elements of $D_{\langle I_1 \rangle}^{-\nu} M$ on which $D_{-1}$ acts  finitely form a submodule $N'$ of $D_{\langle I_1 \rangle}^{-\nu} M$. Then by Proposition \ref{prop-fin-len},  $N'$ has finite length, so we can choose a  simple $\A$-submodule $N$ of $N'$.  Then by  Lemma \ref{lem-tw-loc-simple}, $M \simeq D_{\langle I_1 \rangle}^\nu N$ which proves part (i). 

To prove (ii), we apply  Lemma \ref{lem-loc-fin} and obtain that $N \simeq L (\lambda, c, +)$. Therefore $M \simeq D_{\langle I_1 \rangle} M \simeq D_{\langle I_1 \rangle}^{\nu} L (\lambda, c,+) \simeq T (\lambda+ \nu, \lambda,  c)$. The last isomorphism follows from Lemma \ref{lem-iso-loc-a}(i).

\bigskip
\noindent{\it Case 2: Zero central charge, i.e. $c = 0$.} 

In this case we have the following lemma.

\begin{lemma} \label{lem-3} Suppose that $c =0$ and $D_{-1}$ acts injectively on $M$. Then $I_k =0$ on $M$ for all $k\geq 0$. In particular, $M$ is a simple $W_1$-module, and thus is isomorphic to one of the modules $T(\nu, \lambda , 0)$ ($\lambda - \nu \notin \Z$), $L (\eta , 0,-)$, $\eta \neq 0$.
\end{lemma}
\textit{Proof of Lemma \ref{lem-3}.} Let $d$ be the degree of $M$, let $\lambda \in \mbox{Supp}\, M$ and consider the endomorphisms $S = D_{-1}^2 I_2$  and $T = D_{-1} I_1 |_{M^{\lambda}}$ on the vector space $M^{\lambda}$ of dimension at most $d$. Using that $D_{-1}$ and $I_1$ commute, we easily check that $[T,S] = 2T^2$ and $[T^N,S] = 2N T^{N+2}$. Therefore the trace of the endomorphism $T^N = \left[\frac{1}{2N-4}T^{N-2}, S\right]$ is zero for all $N>2$. But then the sum of the $N$-th powers, $N >  2$, of the eigenvalues of $T$ is zero and hence $T$ is nilpotent. Thus $T^d = 0$. But using again that $I_1$ and $D_{-1}$ commute we find that $I_1^d = 0$ on $M$. Fix $N_0>0$ such that $I_1^{N_0} = 0$ and $I_1^{N_0-1} \neq 0$ on $M$. Let $v_0 \in M$ be such that   $I_1^{N_0-1}(v_0) \neq 0$. Then for $k\geq 1$, we have
$$
0 = D_{k-1}(I_1^{N_0}(v_0)) = I_1^{N_0} (D_{k-1}(v_0)) + N_0I_kI_1^{N_0-1}(v_0).
$$
Therefore $I_k(v) = 0$ for every $k\geq 1$ where $v = I_1^{N_0-1}(v_0)$. This implies that the set $M'$ of all $w$ with the property $I_kw =0$ for all $k\geq 0$ is nonzero. Since $M'$ is an $\A$-submodule of $M$, we have $M'=M$, which proves the first assertion of the lemma. The second part of the lemma follows from the classification of the simple weight $W_1$-modules, i.e. from Theorem \ref{th-class-w1}. 
\end{proof}

\section{Classification of simple bounded $W_2$-modules}

In this section we classify all simple bounded $W_2$-modules, i.e. weight $W_2$-modules with uniformly bounded set of weight multiplicities.

In what follows we  set $\mathfrak a := \Span \{ x_i\partial_j \; | \; 1\leq i,j\leq 2\}$. We know that the Cartan subalgebras of $W_2$, $\mathfrak a$ and $\mathfrak s$ coincide with $\mathfrak h_{W_2}$. Using this and the isomorphisms $\mathfrak s \simeq \mathfrak{sl}_3$, $\mathfrak a \simeq \mathfrak{gl}_2$ we will often write the weights of $\mathfrak{sl}_3$ and $\mathfrak{gl}_2$ as pairs $(\lambda_1,\lambda_2)$. In some cases, when  using  representation theory results for $\mathfrak{sl}_3$ we will write an  $\mathfrak{sl}_3$-weight $\lambda$ as $\lambda = \lambda_0 \varepsilon_0 + \lambda_1 \varepsilon_1 + \lambda_2 \varepsilon_2$ with $\lambda_0+\lambda_1+\lambda_2 = 0$ (in particular $\lambda = (\lambda_1,\lambda_2)$ as an element of $\h_{W_2}^*$).

We also set $W(x_i) := \Der (\C[x_i])$, $i=1,2$, $ \mathcal A (x_1) := W(x_1) \ltimes \left( \C[x_1] (x_2\partial_2)\right)$, and $ \mathcal A (x_2) := W(x_2) \ltimes \left( \C[x_2] (x_1\partial_1)\right)$. In particular, $\A (x_1) \simeq \A (x_2) \simeq \A$.

\begin{definition}
Let $J$ be a set from the list {\rm(i)-(xii)} of Example \ref{ex-par-w2}. We say that a simple weight $W_2$-module $M$ with finite weight multiplicities is \emph{of type $J$} if $M$ is the simple quotient of the module $U(W_2) \otimes_{U(\mathfrak p (J))} L$ for some simple $\mathfrak p (J)$-module $L$.
\end{definition}

We proceed with the classification of simple bounded $W_2$-modules $M$ in three steps depending on the type of $M$.

\subsection{Classification of simple bounded highest weight $W_2$-modules}
In this subsection we classify all bounded highest weight $W_2$-modules, namely all modules from cases (vii)--(xii) in the list of Example \ref{ex-par-w2} and Proposition \ref{prop-w2-par}. For simplicity, in this section, we will not use bold symbols for the vectors and multi-indexes. For example,  we write  $\lambda$  for $\boldsymbol{\lambda}$, etc.

Recall that for every $\lambda \in \C^2$, the tensor modules $T(\lambda,\lambda)$ has four subquotients $T(\lambda, \lambda, (1^+,2^+))$, $T(\lambda, \lambda, (1^+,2^-))$, $T(\lambda, \lambda, (1^-,2^+))$, $T(\lambda, \lambda, (1^-,2^-))$. Some important properties of the these four modules are collected in the next proposition. A proof is provided in \cite{Cav}  and is based on the description of the highest weight bounded $\mathfrak{sl}_3$-modules. 
\begin{proposition} \label{prop-tensor-simple-w2}
Let $\lambda \in \C^2$. 
\begin{itemize}
\item[(i)]  Let $J = (1^+,2^+)$. The $W_2$-module $T(\lambda, \lambda,  J)$ is simple if and only if  $\lambda \neq (0,0)$ and   $\lambda \neq (1,0)$. The $W_2$-module $T((0,0),(0,0),J)$ has length $2$ with simple submodule isomorphic to $\C$. The $W_2$-module $T((1,0),(1,0),J)$ has length $2$ with simple submodule isomorphic to $T((0,0),(0,0),J)/\C$ and simple quotient isomorphic to $T((1,1),(1,1),J)$.  

\item[(ii)] Let $J = (1^+,2^-)$ or $J = (1^-,2^+)$. The $W_2$-module $T(\lambda, \lambda,  J)$ is  simple if and only if $\lambda \neq (1,0)$. The module  $T((1,0), (1,0),  J)$ has length $3$ with simple submodule $T((0,0), (0,0),  J)$, simple quotient $T((1,1), (1,1),  J)$, and simple subquotient $\C$.

\item[(iii)]  Let $J = (1^-,2^-)$. The $W_2$-module $T(\lambda, \lambda,  J)$ is simple if and only if  $\lambda \neq (1,0)$ and $\lambda \neq (1,1)$. The $W_2$-module $T((1,0),(1,0),J)$ has length $2$ with simple submodule isomorphic to isomorphic to $T((0,0),(0,0),J)$.   The $W_2$-module $T((1,1),(1,1),J)$ has length $2$ with simple quotient isomorphic to $\C$ and simple submodule isomorphic to $T((1,0),(1,0),J)/T((0,0),(0,0),J)$.  
\end{itemize}

\end{proposition}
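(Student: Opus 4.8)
The plan is to exploit the embedding $\Phi\colon \mathfrak{s}=\mathfrak{sl}_3 \hookrightarrow W_2$ together with the identification of the relevant tensor modules with the de Rham complex on $\C^2$. First I would note that, because the shift equals the highest weight ($\boldsymbol{\nu}=\boldsymbol{\lambda}$), the interval conditions defining $T(\lambda,\lambda,J)$ in Definition \ref{def-tensor} coincide with the cone conditions attached to the parabolic subsets $P(J)$ of Example \ref{ex-par-w2}. Hence each $T(\lambda,\lambda,J)$ is a highest weight $W_2$-module of one of the types (vii)--(xii), with support a shifted cone, and the four modules for $J\in\{(1^{\pm},2^{\pm})\}$ are the pieces of a single filtration of $T(\lambda,\lambda)$ cut out by the two one-variable interval conditions in $x_1$ and $x_2$. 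Using the subalgebras $\mathcal A(x_1),\mathcal A(x_2)\simeq\A$ and Proposition \ref{prop-tens-w1-mod}, the structure in each separate variable is already known; the task is to understand how these one-variable pieces interact under the $\mathfrak a\simeq\mathfrak{gl}_2$-action on $L_{\mathfrak{gl}}(\lambda)$.

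Next I would produce the nontrivial $W_2$-homomorphisms explicitly. The three exceptional weights $(0,0)$, $(1,0)$, $(1,1)$ are exactly the $\mathfrak{gl}_2$-highest weights of $\wedge^0\C^2$, $\wedge^1\C^2$, $\wedge^2\C^2$, and for these $\lambda$ the module $T(\lambda,\lambda)$ is, up to the shift, the space of $0$-, $1$-, or $2$-forms on $\C^2$. The de Rham differentials, the inclusion of constants, and the residue map give $W_2$-equivariant maps $\C \to \Omega^0 \xrightarrow{d} \Omega^1 \xrightarrow{d} \Omega^2 \to \C$, and restricting these to the appropriate $J$-pieces realizes every inclusion and surjection asserted in the proposition. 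Using the explicit action \eqref{def-tensor-action} I would verify equivariance directly and compute kernels and images: the kernel of $d$ on constants yields the $\C$ submodule in (i), the image of $d$ identifies the submodule $T((0,0),(0,0),J)/\C$ and the quotient $T((1,1),(1,1),J)$ in (i), the residue map yields the $\C$ quotient in (iii), and in the mixed cases (ii) both the bottom and top cohomology survive, producing the length-$3$ series with the extra $\C$ subquotient.

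Finally, to establish simplicity for all remaining $\lambda$ and to pin down the exact lengths, I would pass to the $\mathfrak{sl}_3$-structure along $\Phi$: each $T(\lambda,\lambda,J)$ restricts to a bounded highest weight $\mathfrak{sl}_3$-module, and a proper submodule or singular vector can occur only when $\lambda$ lies on a wall determined by the $\mathfrak{sl}_3$ linkage/Shapovalov conditions. Intersecting these walls with the integrality demanded by the $J$-cones forces the only exceptional weights to be $(0,0)$, $(1,0)$, $(1,1)$, with no other singular vectors, which gives simplicity elsewhere and caps the lengths at the stated values.

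The hard part will be the last step's bookkeeping: one must show that the de Rham maps exhaust \emph{all} primitive vectors (so that no further composition factors are hidden) and must correctly match each factor with the named module, in particular distinguishing the length-$2$ behavior of (i) and (iii) from the length-$3$ behavior of (ii). This matching is where the interaction of the two one-variable reductions of Proposition \ref{prop-tens-w1-mod} through the $\mathfrak{gl}_2$-action is most delicate, since the mixed cones $(1^+,2^-)$ and $(1^-,2^+)$ see both ends of the complex simultaneously while the pure cones see only one.
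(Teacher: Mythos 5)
Your first two steps are sound, and they align with what the paper actually does: the paper contains no proof of this proposition at all, but defers to the thesis \cite{Cav} with the one-line description that the argument is ``based on the description of the highest weight bounded $\mathfrak{sl}_3$-modules,'' so your instinct to reduce to $\mathfrak{sl}_3$ is the right one. Your de Rham construction is also correct and is a genuinely nice way to exhibit every map the proposition asserts: the inclusion of constants into $T((0,0),(0,0),(1^+,2^+))$, the differentials $d$ between the $J$-pieces of functions, $1$-forms and $2$-forms (one checks from (\ref{def-tensor-action}) that $d$ preserves each cone condition, hence descends to the quotients with minus signs), and the residue map $T((1,1),(1,1),(1^-,2^-))\to\C$. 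A K\"unneth computation of the cohomology of the four cone complexes (one copy of $\C$ sitting in degree $0$, $1$, $1$, $2$ for $J=(1^+,2^+)$, $(1^+,2^-)$, $(1^-,2^+)$, $(1^-,2^-)$ respectively) reproduces exactly the filtrations and the length counts $2$, $3$, $3$, $2$ stated in (i)--(iii).

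The genuine gap is your third step, and it is where all the simplicity assertions live. The claim that a proper submodule or singular vector can occur only when $\lambda$ lies on an $\mathfrak{sl}_3$ linkage/Shapovalov wall, and that intersecting those walls with the integrality of the $J$-cones leaves only $(0,0)$, $(1,0)$, $(1,1)$, fails in both directions. The wall locus meets the integrality locus in infinitely many weights (for instance dominant weights in $\Z^2$), not three; and, more fundamentally, $\mathfrak{sl}_3$-reducibility does not detect $W_2$-reducibility. This is already visible one variable down: for $2\lambda\in\Z_{<0}$ the module $L(\lambda,+)=T(\lambda,\lambda,+)$ is a simple $W_1$-module by Proposition \ref{prop-tens-w1-mod}(iii), yet its restriction to $\mathfrak{sl}_2=\Span\{D_{-1},D_0,D_1\}$ contains the finite-dimensional submodule $\Span\{x^{\lambda+k}\otimes v_\lambda \; | \; 0\le k\le -2\lambda\}$, because $D_1(x^{\lambda+k}\otimes v_\lambda)=(2\lambda+k)x^{\lambda+k+1}\otimes v_\lambda$. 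What destroys such would-be submodules over the vector-field algebra is the action of the fields of polynomial degree at least $2$, i.e.\ precisely the part of $W_2$ outside $\Phi(\mathfrak{sl}_3)$, and your sketch never engages with it. A correct completion must either run a Rudakov-type primitive-vector computation against the full (infinite-dimensional) nilradical, showing that nontrivial $W_2$-primitive vectors occur only at $\lambda\in\{(0,0),(1,0),(1,1)\}$ and are exactly the de Rham vectors, or, in the spirit of \cite{Cav}, determine the full $\mathfrak{sl}_3$-composition series of each restriction and then check which $\mathfrak{sl}_3$-submodules are stable under the quadratic vector fields. Without one of these, both the ``only if'' direction of simplicity and the completeness of the composition series at the exceptional weights remain unproved. (A smaller slip: $T(\lambda,\lambda,J)$ is not a highest weight module at the exceptional weights themselves --- e.g.\ $T((0,0),(0,0),(1^+,2^+))=\C[x_1,x_2]$ has the submodule $\C$ and is cyclic but not highest weight --- which matters since those are exactly the cases under analysis.)
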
 \label{prop-char-for}

The character formulae of all tensor $W_2$-modules $T(\lambda, \lambda, J)$ follow directly from their definition. For a weight module $M$ with finite weight multiplicities, we write $\ch M = \sum_{\lambda \in \Supp M} \dim M^{\lambda} e^{\lambda}$.
\begin{proposition}
Let $\lambda \in \C^2$. Then the following identities hold.
\begin{itemize}

\item[(i)] $\displaystyle \ch T(\lambda,\lambda, (1^+, 2^+)) = \frac{\ch L_{\mathfrak{gl}}(\lambda)}{(1-e^{\varepsilon_1})(1-e^{\varepsilon_2})}$.

\item[(ii)] $\displaystyle \ch T(\lambda,\lambda, (1^+, 2^-)) = \frac{e^{-\varepsilon_2}\ch L_{\mathfrak{gl}}(\lambda)}{(1-e^{\varepsilon_1})(1-e^{-\varepsilon_2})}$,\;  $\displaystyle  \ch T(\lambda,\lambda, (1^-, 2^+)) = \frac{e^{-\varepsilon_1}\ch L_{\mathfrak{gl}}(\lambda)}{(1-e^{-\varepsilon_1})(1-e^{\varepsilon_2})}$.

\item[(iii)] $\displaystyle \ch T(\lambda,\lambda, (1^-, 2^-)) = \frac{e^{-\varepsilon_1-\varepsilon_2}\ch L_{\mathfrak{gl}}(\lambda)}{(1-e^{-\varepsilon_1})(1-e^{-\varepsilon_2})}$.

\end{itemize}
In particular, the degrees of all four modules equal $\dim L_{\mathfrak{gl}}(\lambda) = \lambda_1 - \lambda_2 + 1$.
\end{proposition}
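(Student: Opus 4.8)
The plan is to compute each character directly from the definition of the generalized tensor modules $T(\lambda,\lambda,J)$ given in Definition \ref{def-tensor}, since all four cases here have $\boldsymbol{\nu}=\boldsymbol{\lambda}$, so that $\mathrm{Int}(\boldsymbol{\lambda}-\boldsymbol{\nu})=\{1,2\}$ and $J$ ranges over the four sign patterns in $\mathcal{PM}(\boldsymbol{0})$. The underlying vector space in each case is a subspace (or subquotient) of $T(\lambda,\lambda)=\boldsymbol{x}^{\boldsymbol{\lambda}}\mathbb{C}[\boldsymbol{x}^{\pm1}]\otimes L_{\mathfrak{gl}}(\lambda)$, and the $\mathfrak{h}_{W_2}$-weight of the basis vector $\boldsymbol{x}^{\boldsymbol{\lambda}+\mathbf{k}}\otimes v_\mu$ (for $v_\mu$ a $\mathfrak{gl}_2$-weight vector of weight $\mu$ and $\mathbf{k}\in\mathbb{Z}^2$) is $\boldsymbol{\lambda}+\mathbf{k}$, independent of $\mu$. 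The key point is that the weight multiplicity bookkeeping decouples: for each fixed $\mathfrak{gl}_2$-weight $\mu$ occurring in $L_{\mathfrak{gl}}(\lambda)$, the set of allowed shifts $\mathbf{k}$ is a product of one-dimensional lattices (full $\mathbb{Z}$, a half-line $\mathbb{Z}_{\geq0}$, or a half-line $\mathbb{Z}_{<0}$) in each coordinate, according to whether $i\notin J$, $i\in J^+$, or $i\in J^-$.

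First I would treat case (i), $J=(1^+,2^+)$, where by Definition \ref{def-tensor}(ii) the spanning vectors are $\boldsymbol{x}^{\eta}\otimes v_\mu$ with $\eta_i-\mu_i\in\mathbb{Z}_{\geq0}$ for $i=1,2$. Summing $e^{\eta}$ over such $\eta$ for a fixed $v_\mu$ gives $e^{\mu}\sum_{k_1,k_2\geq0}e^{k_1\varepsilon_1+k_2\varepsilon_2}=e^{\mu}/\big((1-e^{\varepsilon_1})(1-e^{\varepsilon_2})\big)$; summing $e^{\mu}$ over a weight basis of $L_{\mathfrak{gl}}(\lambda)$ yields $\ch L_{\mathfrak{gl}}(\lambda)$, which gives formula (i) after factoring out the geometric-series denominator. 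Next I would handle the pure ``$-$'' case (iii), $J=(1^-,2^-)$, which by Definition \ref{def-tensor}(iii) is the quotient $T(\lambda,\lambda,(1^+,2^+))/\big(T(\lambda,\lambda,(1^+,2^+,\text{omit})) \big)$; concretely the surviving shifts satisfy $\eta_i-\mu_i\in\mathbb{Z}_{<0}$ in each coordinate, and the corresponding geometric sum $\sum_{k_i<0}e^{k_i\varepsilon_i}=e^{-\varepsilon_i}/(1-e^{-\varepsilon_i})$ in each coordinate produces formula (iii). The two mixed cases (ii) are identical up to swapping the roles of $\varepsilon_1$ and $\varepsilon_2$ (equivalently, applying $\sigma$), combining one factor of type $i\in J^+$ with one of type $i\in J^-$.

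The only genuine point requiring care is the subquotient case (iii) (and the analogous mixed cases): I must verify that passing to the quotient in Definition \ref{def-tensor}(iii) removes exactly the vectors with $\eta_i-\mu_i\in\mathbb{Z}_{\geq0}$ for the coordinate(s) labeled by $J^-$, so that the character of the quotient is the ``$<0$ half-line'' sum rather than the full-line sum. This follows because $T(\lambda,\lambda,J^+\cup\{j^+\})$ imposes the extra constraint $\eta_j-\mu_j\geq0$ on top of the constraints from $J^+$, and these submodules are spanned by subsets of the weight basis, so the quotient is again spanned by a subset of the basis (indexed by the complementary sign conditions); hence characters are additive and subtract cleanly. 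This is the step I expect to be the main obstacle, but it is bookkeeping rather than a real difficulty, since every space involved has a monomial weight basis and the $\mathfrak{h}_{W_2}$-weight depends only on the shift $\mathbf{k}$ and not on $\mu$. Finally, to obtain the degree statement I would specialize: the weight multiplicity at a generic weight equals $\dim L_{\mathfrak{gl}}(\lambda)$ in every case, since each of the four geometric denominators contributes a single term at any fixed generic $\eta$ (for each coordinate exactly one lattice point in the relevant half-line or full line lands at a given offset from each $\mu$), and $\dim L_{\mathfrak{gl}}(\lambda)=\lambda_1-\lambda_2+1$ by the Weyl dimension formula for $\mathfrak{gl}_2$.
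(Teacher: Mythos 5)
Your proposal is correct and takes essentially the same route as the paper, which offers no argument beyond the remark that the character formulae ``follow directly from their definition'' --- precisely the geometric-series bookkeeping over monomial weight bases that you carry out, including the observation that the subquotients in Definition \ref{def-tensor}(iii) are spanned by complementary subsets of the basis so characters subtract cleanly. One minor slip: for $J=(1^-,2^-)$ we have $J^+=\emptyset$, so the numerator in Definition \ref{def-tensor}(iii) is the full module $T(\lambda,\lambda,\emptyset)=T(\lambda,\lambda)$ quotiented by $T(\lambda,\lambda,1^+)+T(\lambda,\lambda,2^+)$, not $T(\lambda,\lambda,(1^+,2^+))$ as you wrote; your subsequent description of the surviving vectors ($\eta_i-\mu_i\in\Z_{<0}$ in both coordinates) is nevertheless the correct one, so the computation stands.
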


For any Borel subalgebra $\mathfrak b$ of $W_2$ induced by a Borel subalgebra $\mathfrak b_{\mathfrak s}$ of ${\mathfrak s} \simeq \mathfrak{sl}_3$, by $L_{\mathfrak b}(\lambda)$ (respectively, by  $L_{\mathfrak b_{\mathfrak s}}^{\mathfrak{sl}}(\lambda)$) we denote the simple highest weight $W_2$-module (respectively, $\mathfrak s$-module) relative to $\mathfrak b$ (respectively, to ${\mathfrak b}_{\mathfrak s}$) with highest weight $\lambda$. In the case when $\mathfrak b_{\mathfrak s}$ is the standard Borel subalgebra $\mathfrak b_{st}$ of ${\mathfrak s} \simeq \mathfrak{sl}_3$, i.e. the one  with base $\Pi_{st} = \{ \varepsilon_0 - \varepsilon_1, \varepsilon_1 - \varepsilon_2\}$, we will write $L(\lambda)$ and $L^{\mathfrak{sl}}(\lambda)$ for $L_{\mathfrak b}(\lambda)$  and $L_{\mathfrak b_{\mathfrak s}}^{\mathfrak{sl}}(\lambda)$, respectively. Note that the Borel subalgebra of $W_2$ induced by $\mathfrak b_{st}$ is ${\mathfrak p} (2^+, 12^+)$. For $\mathfrak b_{st}^-$ (the opposite to the standard Borel subalegbra) and its induced Borel subalgebra ${\mathfrak p} (2^-, 12^-)$ of $W_2$, the corresponding modules will be denoted by $\tilde{L}(\lambda)$  and $\tilde{L}^{\mathfrak{sl}}(\lambda)$, respectively.

For a weight $W_2$-module $M = \bigoplus_{\lambda \in \h^*} M^{\lambda}$ with finite weight multiplicities, by $M^*$ we denote the restricted dual of $M$, namely the module $\bigoplus_{\lambda \in \h^*}  \Hom_{\mathbb C} (M^{\lambda}, {\mathbb C})$ with action defined by  $ (u f) (m) = f (-um)$. It is clear that $M^*$ is also a weight module with finite weight multiplicities. Moreover, $\left(L_{\mathfrak b} (\lambda)\right)^* \simeq L_{\mathfrak b^-}(-\lambda)$, where recall that ${\mathfrak b^-}$ is the opposite to ${\mathfrak b}$ Borel subalgebra. Certainly, the same isomorphism holds for the corresponding
highest weight $\mathfrak s$-modules, and Borel subalgebras  of  $\mathfrak{sl}_3$.

A weight $\lambda$ will be called \emph{$(W_2, \mathfrak b)$-bounded} (respectively, \emph{$(\mathfrak{sl}_3, \mathfrak b_{\mathfrak s})$-bounded}) if $L_{\mathfrak b}(\lambda)$ (respectively $L_{\mathfrak b_{\mathfrak s}}^{\mathfrak{sl}}(\lambda)$) is a bounded module. We will use the following classification of the $(\mathfrak{sl}_3, \mathfrak b_{\mathfrak s})$-bounded weights, see Lemma 7.1 in \cite{M}.

\begin{lemma} \label{lem-sl3-bw}
A weight $\lambda$ of $\mathfrak{sl}_3$ is $(\mathfrak{sl}_3, \mathfrak b_{\mathfrak s})$-bounded if and only if 
$(\lambda + \rho_{\mathfrak b_{\mathfrak s}}, \alpha) \in \Z_{\geq 0}$ for some root $\alpha$ of $\mathfrak b_{\mathfrak s}$, where $\rho_{\mathfrak b_{\mathfrak s}}$ is the half-sum of the ${\mathfrak b_{\mathfrak s}}$-positive roots of $\Delta_{\mathfrak{sl}_3}$.
\end{lemma}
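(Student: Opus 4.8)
The plan is to characterize $(\mathfrak{sl}_3, \mathfrak{b}_{\mathfrak s})$-boundedness by reducing the question to the representation theory of a rank-one subalgebra, since boundedness of a highest weight module is governed by the existence of finite-dimensional ``slices'' in the direction of each simple root. First I would recall that for a simple Lie algebra, the Gelfand--Kirillov dimension (equivalently, the growth) of $L^{\mathfrak{sl}}_{\mathfrak b_{\mathfrak s}}(\lambda)$ is controlled by how many positive root directions act locally nilpotently on a highest weight vector, which in turn is detected by dominance of $\lambda$ with respect to individual roots. Concretely, a highest weight module is bounded precisely when its support occupies (up to a bounded fiber) a $2$-dimensional affine region inside the rank-$2$ weight lattice with bounded thickness in the remaining direction; for $\mathfrak{sl}_3$ this forces at least one simple-reflection direction to ``close up'' into finite strings.

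The key computation is the standard $\mathfrak{sl}_2$-triple analysis: for a root $\alpha$ of $\mathfrak b_{\mathfrak s}$, the corresponding $\mathfrak{sl}_2$-subalgebra $\langle e_\alpha, h_\alpha, f_\alpha\rangle$ acts on $L^{\mathfrak{sl}}_{\mathfrak b_{\mathfrak s}}(\lambda)$, and the string of weights through the highest weight vector in the $-\alpha$ direction is finite exactly when $(\lambda, \alpha^\vee) \in \Z_{\geq 0}$, i.e. when $e_\alpha$ acts locally nilpotently. Translating to the shifted weight $\lambda + \rho_{\mathfrak b_{\mathfrak s}}$ and pairing against $\alpha$ rather than $\alpha^\vee$ (which for $\mathfrak{sl}_3$ differ only by the normalization absorbed into the statement's pairing $(\cdot,\cdot)$) gives the condition $(\lambda + \rho_{\mathfrak b_{\mathfrak s}}, \alpha) \in \Z_{\geq 0}$. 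I would show that if this holds for at least one positive root $\alpha$, then $f_\alpha$ generates only finite-dimensional strings in that direction while the two remaining directions contribute at most polynomial (in fact linear, since $\mathfrak{sl}_3$ has rank $2$) growth, so the weight multiplicities are uniformly bounded; hence $\lambda$ is bounded.

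For the converse, the plan is to argue contrapositively: if $(\lambda + \rho_{\mathfrak b_{\mathfrak s}}, \alpha) \notin \Z_{\geq 0}$ for \emph{every} positive root $\alpha$, then every $f_\alpha$ acts injectively on the highest weight vector, so the module has full support of the form $\lambda - \mathbb{Z}_{\geq 0}\Pi$ with multiplicities growing without bound along the two-dimensional cone. I would make this precise by exhibiting, for each $N$, a weight whose multiplicity exceeds $N$, using the fact that when no positive root direction truncates, the Shapovalov form is nondegenerate in a $2$-parameter family and the multiplicities grow quadratically (unboundedly) in the distance from $\lambda$. This is essentially the content of Mathieu's Lemma 7.1 in \cite{M}, so for the write-up I would invoke that lemma directly rather than reprove the growth estimate.

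The main obstacle is the converse direction's growth estimate: verifying that \emph{no} bounded ``thin slab'' support can occur when all root pairings fail to be nonnegative integers requires ruling out degenerate configurations where one direction is truncated for a non-obvious reason (for instance, integrality failing but a singular vector still appearing). Since the desired statement is precisely the cited Lemma 7.1 of \cite{M}, the cleanest route is to state it as a direct consequence of Mathieu's classification of bounded highest weight modules of reductive Lie algebras, checking only that the normalizations of $\rho_{\mathfrak b_{\mathfrak s}}$ and of the pairing $(\cdot,\cdot)$ match those in \cite{M}; the genuine analytic work of the growth dichotomy is already done there.
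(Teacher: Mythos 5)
The paper itself gives no proof of this lemma---it is stated as a quotation of Lemma 7.1 of \cite{M} (``We will use the following classification\dots, see Lemma 7.1 in \cite{M}'')---so your bottom line, namely to invoke Mathieu's result directly and only verify that the normalizations of $\rho_{\mathfrak b_{\mathfrak s}}$ and of the pairing $(\cdot,\cdot)$ agree with those of \cite{M}, coincides exactly with the paper's approach, and that convention check is indeed the only real content such a ``proof'' requires. It is just as well that you deferred the substance to \cite{M}, because the mechanism in your sketch is not the right one: boundedness of $L^{\mathfrak{sl}}_{\mathfrak b_{\mathfrak s}}(\lambda)$ is produced by singular vectors in the Verma module (the quotient by the Verma submodule they generate is what caps the multiplicities), not by some $f_\alpha$ acting locally nilpotently on the highest weight vector---for instance, when $(\lambda+\rho_{\mathfrak b_{\mathfrak s}},\alpha^\vee)=1$ for the highest root $\alpha$ of $\mathfrak b_{\mathfrak s}$ while both simple-root pairings are non-integral, the simple highest weight module is bounded of degree $1$ even though \emph{every} negative root vector acts injectively on the highest weight vector, a case that your ``finite $\mathfrak{sl}_2$-string'' criterion misses and that also shows the unshifted condition $(\lambda,\alpha^\vee)\in\Z_{\geq 0}$ and the $\rho$-shifted condition $(\lambda+\rho_{\mathfrak b_{\mathfrak s}},\alpha)\in\Z_{\geq 0}$ are genuinely different conditions, so the translation step in your second paragraph would not survive a careful write-up.
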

Note that in the lemma above we may have more than one root $\alpha$ that satisfy the stated condition. In particular, if all three roots satisfy the condition, then $L_{\mathfrak b_{\mathfrak s}}^{\mathfrak{sl}}(\lambda)$ is finite dimensional.

\begin{lemma} \label{lem-hw-wbounded}
Let $\lambda \in \C^2$. Then $L(\lambda)$ is a bounded module if and only if $\lambda_1 - \lambda_2 \in \Z_{\geq 0}$.
\end{lemma}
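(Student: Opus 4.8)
The plan is to test boundedness of $L(\lambda)$ against the reductive subalgebra $\mathfrak a=\Span\{x_i\partial_j\}\simeq\mathfrak{gl}_2$ and to isolate the single numerical invariant governing it. Since $\mathfrak a$ shares the Cartan $\mathfrak h_{W_2}$, the $\mathfrak h_{W_2}$-weight multiplicities of $L(\lambda)$ as a $W_2$-module coincide with those of its restriction to $\mathfrak a$, so $L(\lambda)$ is a bounded $W_2$-module if and only if it is bounded as an $\mathfrak a$-module. Let $e=x_1\partial_2$, $f=x_2\partial_1$, $h=x_1\partial_1-x_2\partial_2$, spanning a copy of $\mathfrak{sl}_2$ in $\mathfrak a$. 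Because $\varepsilon_1-\varepsilon_2$ is the simple root of $\Pi_{st}$ realized by $x_1\partial_2\in\mathfrak n^+$, the highest weight vector $v_\lambda$ satisfies $ev_\lambda=0$ and $hv_\lambda=(\lambda_1-\lambda_2)v_\lambda$; thus $\lambda_1-\lambda_2\in\Z_{\geq 0}$ is exactly the condition that $\lambda$ is a dominant integral $\mathfrak a$-weight, equivalently that the string $\{f^cv_\lambda\}_{c\geq 0}$ is finite. I would therefore reduce the lemma to the equivalence: $L(\lambda)$ is bounded if and only if $f^Nv_\lambda=0$ for some $N$.

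For the (easy) sufficiency direction, assume $\lambda_1-\lambda_2\in\Z_{\geq 0}$, so that $L_{\mathfrak{gl}}(\lambda)$ is finite dimensional of dimension $\lambda_1-\lambda_2+1$. I would then realize $L(\lambda)$ as a simple subquotient of a generalized tensor module from Definition \ref{def-tensor} (possibly after a twisted localization, Lemma \ref{lem-iso-loc-a}), whose parameters and sign pattern are chosen so that its $\mathfrak b$-extremal weight for the standard Borel $\mathfrak b=\mathfrak p(2^+,12^+)$ is $\lambda$; the correct data is read off by matching the Penkov--Serganova support of type (xiv) in Proposition \ref{prop-w2-par} with the support of the tensor module. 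By the structure result Proposition \ref{prop-tensor-simple-w2} together with the character formulas, every such tensor module is a bounded $W_2$-module of degree $\dim L_{\mathfrak{gl}}(\lambda)=\lambda_1-\lambda_2+1$, and, being its unique simple highest weight subquotient, $L(\lambda)$ inherits boundedness.

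For the (hard) necessity direction I would argue by contraposition: assuming $\lambda_1-\lambda_2\notin\Z_{\geq 0}$, so that $f^cv_\lambda\neq 0$ for all $c$, I would exhibit weight spaces of unbounded dimension. The operators $\partial_1,\partial_2$ lie in $\mathfrak n^-$, and under $\ad\mathfrak{sl}_2$ the plane $\Span\{\partial_1,\partial_2\}$ is the standard two dimensional $\mathfrak{sl}_2$-module (one checks $[h,\partial_1]=-\partial_1$, $[h,\partial_2]=\partial_2$, $[f,\partial_2]=-\partial_1$); consequently the vectors
\[
u_c:=\partial_1^{\,P-c}\,\partial_2^{\,c}\,f^{\,c}\,v_\lambda,\qquad c=0,1,\dots,P,
\]
all share the weight $(\lambda_1-P,\lambda_2)$. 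It then suffices to show that a number of the $u_c$ growing with $P$ are linearly independent in $L(\lambda)$, for then $\dim L(\lambda)^{(\lambda_1-P,\lambda_2)}\to\infty$, contradicting boundedness (and forcing the finite length statement of Proposition \ref{prop-fin-len} to fail along this lattice coset).

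The main obstacle is precisely this linear independence in the simple quotient: in the Verma module the $u_c$ are distinct PBW monomials and hence independent, but in $L(\lambda)$ one must rule out collapse coming from the maximal submodule. I expect to control this through the $\mathfrak{sl}_2$-module $S^P\Span\{\partial_1,\partial_2\}\cong V_P$ acting on $v_\lambda$: the $\mathfrak{sl}_2$-submodule generated by $\{u_c\}$ is a quotient of $V_P\otimes M(\lambda_1-\lambda_2)$, whose Clebsch--Gordan constituents are the Verma modules $M(\lambda_1-\lambda_2+P-2j)$. When $\lambda_1-\lambda_2\notin\Z$ these are all irreducible, the weight $(\lambda_1-P,\lambda_2)$ occurs with multiplicity $P+1$ with no singular vectors available to reduce it, and the contradiction follows directly. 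The residual case $\lambda_1-\lambda_2\in\Z_{<0}$, where some Clebsch--Gordan factors become reducible, I would reduce to the generic case by twisted localization at $f$ (or at $\partial_1$) via Lemma \ref{lem-tw-loc-simple}, which moves the parameter off $\Z$ while preserving the growth of weight multiplicities, so that $\mathfrak{sl}_3$-boundedness of Lemma \ref{lem-sl3-bw} cannot be restored.
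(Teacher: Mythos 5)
Your sufficiency direction is fine and coincides with the paper's: for $\lambda_1-\lambda_2\in\Z_{\geq 0}$, the module $L(\lambda)$ is a subquotient of $T(\lambda,\lambda,(1^+,2^+))$, which is bounded of degree $\lambda_1-\lambda_2+1$. The gap is in the necessity direction, and it sits exactly at the point you yourself flag as ``the main obstacle.'' Your map $V_P\otimes M(\lambda_1-\lambda_2)\to L(\lambda)$, $u\otimes m\mapsto u\cdot m$, is indeed an $\mathfrak{sl}_2$-module homomorphism (the plane $\Span\{\partial_1,\partial_2\}$ is $\ad$-stable, as you check), and for $\lambda_1-\lambda_2\notin\Z$ its source is a direct sum of $P+1$ irreducible Verma modules, so the relevant weight has multiplicity $P+1$ in the source. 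But nothing prevents the kernel of this map from containing all but boundedly many of those Verma summands. The kernel is the preimage of the maximal submodule $N$ of the induced module $U(W_2)\otimes_{U(\mathfrak p(2^+,12^+))}\C_\lambda$; $N$ is a $W_2$-submodule, huge precisely when $L(\lambda)$ is small, and a Verma summand dies if and only if its $\mathfrak{sl}_2$-highest weight vector lies in $N$, i.e., if and only if $N$ contains a suitable $(x_1\partial_2)$-singular vector. Submodules on which $x_1\partial_2$ acts locally nilpotently always contain such singular vectors, so ``no singular vectors available'' is not a property of the Clebsch--Gordan decomposition of the source; it is a claim about $N$, and it is exactly what the lemma asks you to prove. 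Put differently: if, counterfactually, $L(\lambda)$ were bounded of degree $d$, your construction would merely force at least $P+1-d$ of the irreducible constituents into the kernel, and irreducibility gives no contradiction, since the kernel of a map out of a semisimple module can be an arbitrary submodule.

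The residual integral case $\lambda_1-\lambda_2\in\Z_{<0}$ has the same status. Twisted localization of a simple highest weight module at $f$ or $\partial_1$ destroys the highest weight property in the direction of localization, so the localized module is not of the form $L(\lambda')$ with a shifted parameter, and Lemma \ref{lem-tw-loc-simple} by itself does not identify which simple modules appear or what their extremal weights are relative to other Borel subalgebras. That identification is genuinely delicate and is precisely the content of the paper's own argument: it dualizes to $\tilde L(\mu)=L(-\lambda)^*$, shows $\partial_1$-injectivity, extracts a $\mathfrak p(1^+,2^-)$-primitive vector $\partial_1^{-1}v$ in $D_{\langle\partial_1\rangle}\tilde L(\mu)/\tilde L(\mu)$, and then plays the resulting boundedness constraints from Mathieu's classification of $(\mathfrak{sl}_3,\mathfrak b_{\mathfrak s})$-bounded weights (Lemma \ref{lem-sl3-bw}) against one another through a case analysis of the three weight types. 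Your sketch supplies no substitute for this step, so the ``only if'' half of the lemma remains unproved.
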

\begin{proof}
If $\lambda_1 - \lambda_2 \in \Z_{\geq 0}$, by  Proposition \ref{prop-tensor-simple-w2} we know that  $L(\lambda)$ is a subquotient of $T(\lambda, \lambda, (1^+,2^+))$  (in fact,  $L(\lambda) \simeq T(\lambda, \lambda, (1^+,2^+))$ if $\lambda \neq (0,0), (1,0)$). Hence, $L(\lambda)$ is bounded. 

For the ``only if'' direction, we will prove the following equivalent statement: If $\tilde{L}(\mu)$ is bounded, then $\mu_2 - \mu_1 \in \Z_{\geq 0}$. The two statements are equivalent because $L(\lambda)^* = \tilde{L}(-\lambda)$. Since  $\mu$ is an $(\mathfrak{sl}_3, \mathfrak{b}^-_{st})$-bounded weight and $\rho_{\mathfrak b_{st}^-} = \varepsilon_2 - \varepsilon_0$, by Lemma \ref{lem-sl3-bw}, $\mu$ is one (or more than one) of the following three types:
 
\noindent {\it Type 1:} $\mu_2 - \mu_1 \in \Z_{\geq 0}$; 

\noindent {\it Type 2:}  $2 \mu_1 + \mu_2 \in \Z_{\geq 0}$; 

\noindent {\it Type 3:} $ \mu_1 + 2\mu_2 + 1\in \Z_{\geq 0}$.

Assume for the sake of contradiction that $\mu_2 - \mu_1 \notin \Z_{\geq 0}$, in particular, $\mu$ is of Type 2 or of Type 3. Then $\tilde{L} (\mu)$ is $\partial_1$-injective module. Indeed, if $\tilde{L} (\mu)$ is $\partial_1$-finite, then the $\A (x_1)$-module generated by a highest weight vector of  $\tilde{L} (\mu)$ must have finite support. But the only possible finite-dimensional $\A$-modules are the trivial modules, i.e. $\mu_1=\mu_2=0$,  contradicting  $\mu_2 - \mu_1 \notin \Z_{\geq 0}$.

Since  $\tilde{L} (\mu)$ is $\partial_1$-injective, it can be considered as a submodule of  $D_{\langle \partial_1\rangle }\tilde{L} (\mu)$. But then the quotient  $D_{\langle \partial_1\rangle }\tilde{L} (\mu)/ \tilde{L} (\mu)$ has a primitive vector relative to the Borel subalgebra $\mathfrak p (1^+, 2^-)$. Namely, this is the vector $\partial_1^{-1} v$ where $v$ is a highest weight vector of $ \tilde{L} (\mu)$. As a result $(\mu_1+1,\mu_2)$ is a $(W_2, \mathfrak p (1^+, 2^-))$-bounded weight. This implies that $(-\mu_1 - \mu_2 -1)\varepsilon_0 + (\mu_1 + 1)\varepsilon_1 + \mu_2\varepsilon_2$ is $(\mathfrak{sl}_3, s_{\varepsilon_0-\varepsilon_1} {\mathfrak b}_{st}^-)$-bounded. Here $s_{\beta}$ denotes the reflection of the Weyl group reflection corresponding to the $\mathfrak{sl}_3$-root $\beta$. We apply Lemma \ref{lem-sl3-bw} again but this time for the weight $(\mu_1 +1,\mu_2)$. Then one of the following conditions hold:\\
(a)  $ \mu_1 + 2\mu_2 + 1\in \Z_{\geq 0}$; (b) $ -2 \mu_1 - \mu_2 - 2 \in \Z_{\geq 0}$; (c) $\mu_2 - \mu_1 \in \Z_{\geq 0}$.\\
We already assumed that (c) does not hold. If (a) holds then $\mu$ is of Type 3. If (b) holds then $\mu$ can not be of Type 2. Hence, it remains to consider the case when $\mu$ is of Type 3. Look again at the simple highest weight $W_2$-module $L = L_{\mathfrak p (1^+, 2^-)} (\mu_1+1,\mu_2)$. As mentioned above, this module has a simple $\mathfrak{sl}_3$-subquotient $L_0$ with highest weight $(-\mu_1 - \mu_2 -1)\varepsilon_0 + (\mu_1 + 1)\varepsilon_1 + \mu_2\varepsilon_2$ relative to ${\mathfrak b}_{st}^-$. Since $\mu_2 - (-\mu_1 - \mu_2 -1) \in \Z_{\geq 0}$, $L_0$ is $\partial_2$-finite. Therefore $L$ has a simple $\mathfrak{sl}_3$-subquotient isomorphic to $L_{s_{\varepsilon_0-\varepsilon_1} {\mathfrak b}_{st}^-}^{\mathfrak{sl}}(\mu_2+1,-\mu_1-\mu_2-2)$. However one easily checks that since $\mu_2 - \mu_1 \notin \Z_{\geq 0}$ and $ \mu_1 + 2\mu_2 + 1\in \Z_{\geq 0}$, the weight $(\mu_2+1,-\mu_1-\mu_2-2)$ is not $(\mathfrak{sl}_3, s_{\varepsilon_0-\varepsilon_1}{\mathfrak b}_{st}^-)$-bounded. This contradicts with the fact that $L_{s_{\varepsilon_0-\varepsilon_1} {\mathfrak b}_{st}^-}^{\mathfrak{sl}}(\mu_2+1,-\mu_1-\mu_2-2)$ is a subquotient of the bounded module $L$.
\end{proof}

\begin{theorem} \label{th-hw-bounded} Let $\lambda \in \C^2$. Then the highest weight $W_2$-module $L_{\mathfrak b} (\lambda)$ is bounded if and only if:
\begin{itemize}
\item[(i)] $\lambda_1 - \lambda_2 \in \Z_{\geq 0}$ for $\mathfrak b = {\mathfrak p}(2^+,12^+)$ and  $\mathfrak b = {\mathfrak p}(1^-,12^-)$,
\item[(ii)] $\lambda_2 - \lambda_1 \in \Z_{\geq 0}$ for $\mathfrak b = {\mathfrak p}(1^+,12^+)$ and  $\mathfrak b = {\mathfrak p}(2^-,12^-)$,
\item[(iii)] $\lambda_1 - \lambda_2 + 1 \in \Z_{\geq 0}$ for $\mathfrak b = {\mathfrak p}(1^-,2^+)$.
\item[(iv)] $\lambda_2 - \lambda_1 + 1 \in \Z_{\geq 0}$ for $\mathfrak b = {\mathfrak p}(1^+,2^-)$,
\end{itemize}
\end{theorem}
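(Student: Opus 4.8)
The plan is to reduce the six Borel subalgebras to a single genuinely new case by exploiting the two symmetries at our disposal, and then to attack that case by combining the twisted localization functor with the $\mathfrak{sl}_3$-boundedness criterion of Lemma \ref{lem-sl3-bw}. First I would dispose of the four ``long'' Borel subalgebras in (i) and (ii). Lemma \ref{lem-hw-wbounded} is exactly the case $\mathfrak b=\mathfrak p(2^+,12^+)$. Applying the automorphism $\sigma$ (which interchanges $\varepsilon_1,\varepsilon_2$ and hence carries the defining set $J$ of $\mathfrak p(2^+,12^+)$ to that of $\mathfrak p(1^+,12^+)$) transports this to $\mathfrak b=\mathfrak p(1^+,12^+)$, turning $\lambda_1-\lambda_2\in\Z_{\geq0}$ into $\lambda_2-\lambda_1\in\Z_{\geq0}$. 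Next, restricted duality $(L_{\mathfrak b}(\lambda))^*\simeq L_{\mathfrak b^-}(-\lambda)$ together with $\mathfrak p(2^+,12^+)^-=\mathfrak p(2^-,12^-)$ and $\mathfrak p(1^+,12^+)^-=\mathfrak p(1^-,12^-)$ yields the remaining two long cases with their stated conditions. Finally, since $\sigma$ interchanges $\mathfrak p(1^+,2^-)$ and $\mathfrak p(1^-,2^+)$, statements (iii) and (iv) are equivalent, so it suffices to settle one mixed Borel, say $\mathfrak b=\mathfrak p(1^-,2^+)$, for which the claim reads $\lambda_1-\lambda_2+1\in\Z_{\geq0}$.

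For the sufficiency direction for $\mathfrak p(1^-,2^+)$ I would argue uniformly by wall-crossing. The mixed Borel $\mathfrak p(1^-,2^+)$ differs from the long Borel $\mathfrak p(1^+,12^+)$ by a reflection in the root $\varepsilon_0-\varepsilon_2$, whose ad-nilpotent root vector is $\partial_2$. Applying the twisted localization functor $D^{z}_{\langle\partial_2\rangle}$ to the already-bounded module $L_{\mathfrak p(1^+,12^+)}(\mu)$ produces, for the appropriate $z$ and $\mu$ attached to $\lambda$, a bounded $W_2$-module containing $L_{\mathfrak p(1^-,2^+)}(\lambda)$ as a subquotient; since $D^{z}_{\langle\partial_2\rangle}$ preserves finite-dimensionality of weight spaces, boundedness is inherited. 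Alternatively, on the dominant range $\lambda_1-\lambda_2\in\Z_{\geq0}$ one may realize $L_{\mathfrak p(1^-,2^+)}(\lambda)$ directly as a subquotient of a bounded tensor module $T(\mu,\mu,(1^-,2^+))$ for a suitable dominant $\mu$ determined by $\lambda$ (Proposition \ref{prop-tensor-simple-w2}), leaving only the single extra $\rho$-shifted layer $\lambda_1-\lambda_2=-1$ to be covered by the localization argument.

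For the necessity direction I would mirror the structure of the proof of Lemma \ref{lem-hw-wbounded}. If $L_{\mathfrak p(1^-,2^+)}(\lambda)$ is bounded, then its simple $\mathfrak{sl}_3$-subquotient $L^{\mathfrak{sl}}_{\mathfrak b_{\mathfrak s}}(\lambda)$, for the Borel $\mathfrak b_{\mathfrak s}$ of $\mathfrak{sl}_3$ inducing $\mathfrak p(1^-,2^+)$, is bounded, so by Lemma \ref{lem-sl3-bw} the weight $\lambda$ satisfies one of a short list of linear ``type'' conditions. As in Lemma \ref{lem-hw-wbounded}, these conditions alone are too weak, and I would eliminate the spurious types via $\partial_i$-injectivity: if the relevant $\partial_i$ acts finitely, the $\mathcal A$-module generated by a highest weight vector has finite support and is forced to be trivial; if it acts injectively, passing to $D_{\langle\partial_i\rangle}$ creates a primitive vector for an adjacent Borel, to which Lemma \ref{lem-sl3-bw} applies again with a $\rho$-shifted weight, yielding a contradiction unless $\lambda_1-\lambda_2+1\in\Z_{\geq0}$. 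Lemma \ref{lem-tw-loc-simple} keeps the localized modules simple throughout, so these reductions are reversible.

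The main obstacle will be this last step: pinning down the exact $\rho$-shift. Both the elimination of spurious $\mathfrak{sl}_3$-types and the sufficiency at the boundary hinge on controlling precisely the single extra weight $\lambda_1-\lambda_2=-1$, that is, on verifying that the wall-crossing at $\varepsilon_0-\varepsilon_2$ is an equivalence on exactly the claimed range and degenerates (through failure of $\partial_2$-injectivity or non-simplicity of the localization) precisely off it. This is the same ``$+1$'' phenomenon already visible in rank one, where $L(\lambda,-)=T(\lambda+1,\lambda+1,-)$; getting the boundary bookkeeping right in both directions is where the real work lies.
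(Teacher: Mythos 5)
Your reductions of cases (i)--(ii) via Lemma \ref{lem-hw-wbounded}, $\sigma$, and restricted duality, and the reduction of (iv) to (iii), match the paper (the paper reduces (iv) to (iii) by duality, using ${\mathfrak p}(1^+,2^-)^- = {\mathfrak p}(1^-,2^+)$; your use of $\sigma$ works equally well). The gap is in your primary sufficiency mechanism for ${\mathfrak p}(1^-,2^+)$. On a bounded highest weight module $L_{{\mathfrak p}(1^+,12^+)}(\mu)$ both $\partial_1$ and $\partial_2$ act locally nilpotently: up to the $\sigma$-twist such a module is a subquotient of $T(\nu,\nu,(1^+,2^+))$ with $\nu = \sigma(\mu)$, whose support is bounded below in both coordinate directions. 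Consequently $D_{\langle \partial_2\rangle}L_{{\mathfrak p}(1^+,12^+)}(\mu) = 0$ (if $a^N m = 0$, then $1\otimes m = a^{-N}\otimes a^N m = 0$ in the localization), and hence $D^z_{\langle \partial_2\rangle}L_{{\mathfrak p}(1^+,12^+)}(\mu) = \Phi_z D_{\langle \partial_2\rangle}L_{{\mathfrak p}(1^+,12^+)}(\mu) = 0$ as well: twisted localization can only be run starting from a module on which the chosen root vector acts injectively, never from a highest weight module in a direction where it acts locally finitely. (Separately, the adjacency is misidentified: crossing the $\varepsilon_0-\varepsilon_2$ wall from ${\mathfrak p}(1^-,2^+)$ leads to ${\mathfrak p}(1^-,12^-)$, while crossing the $\varepsilon_0-\varepsilon_1$ wall, whose root vector is $\partial_1$, leads to ${\mathfrak p}(2^+,12^+)$; the Borel ${\mathfrak p}(1^+,12^+)$ is two walls away.) Since you assign exactly the boundary layer $\lambda_1-\lambda_2=-1$ of the sufficiency direction to this mechanism, that layer is not covered by your argument.

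The repair, which is the paper's actual proof, needs no localization for sufficiency: $L_{{\mathfrak p}(1^-,2^+)}(\lambda)$ is a subquotient of the generalized tensor module $T((\lambda_1+1,\lambda_2),(\lambda_1+1,\lambda_2),(1^-,2^+))$, which is defined and bounded (of degree $\lambda_1-\lambda_2+2$) precisely when $(\lambda_1+1,\lambda_2)$ is a dominant integral $\mathfrak{gl}_2$-weight, i.e. when $\lambda_1-\lambda_2+1\in\Z_{\geq 0}$. The ``$+1$'' you correctly anticipated from the rank-one identity $L(\lambda,-)=T(\lambda+1,\lambda+1,-)$ sits in the parameter of the tensor module, so the entire claimed range, boundary included, is covered at once. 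For necessity your outline can be closed, but more cheaply than by re-running the $\mathfrak{sl}_3$ type elimination of Lemma \ref{lem-sl3-bw}: as in the paper, once $L = L_{{\mathfrak p}(1^-,2^+)}(\lambda)$ is observed to be $\partial_1$-injective (your finite-support argument gives this), the vector $\partial_1^{-1}w$, with $w$ a highest weight vector, is a ${\mathfrak p}(2^+,12^+)$-primitive vector of weight $(\lambda_1+1,\lambda_2)$ in the bounded module $D_{\langle \partial_1\rangle}L/L$, and part (i) of the theorem, already established at the $W_2$ level, yields $\lambda_1+1-\lambda_2\in\Z_{\geq 0}$ directly.
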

\begin{proof} Using Lemma \ref{lem-hw-wbounded} and applying the duality functor $M\mapsto M^*$ and the twist by the automorphism $\sigma$, we easily prove (i) and (ii). 

Again by duality and because ${\mathfrak p}(1^+,2^-)^- = {\mathfrak p}(1^-,2^+)$, we see that it is enough to show (iii). For the ``only if'' direction we use that $L_{{\mathfrak p}(1^-,2^+)}(\lambda_1, \lambda_2)$ is isomorphic to a subquotient of the bounded module $T((\lambda_1+1,\lambda_2), (\lambda_1+1,\lambda_2), (1^-,2^+))$. Assume now that $L = L_{{\mathfrak p}(1^-,2^+)}(\lambda_1, \lambda_2)$ is bounded.  We reason as in the proof of  Lemma \ref{lem-sl3-bw}. Namely, we first observe that $L$ is $\partial_1$-injective. Then the module $D_{\langle \partial_1 \rangle} L / L$ has a ${\mathfrak p}(2^+, 12^+)$-primitive vector of weight $(\lambda_1 + 1, \lambda_2)$ (namely the vector $\partial_1^{-1}w$ where $w$ is a highest weight vector of $L$). Then we use (i) for  $(\lambda_1 + 1, \lambda_2)$ and ${\mathfrak p}(2^+, 12^+)$ and complete the proof.
\end{proof}

\subsection{Classification of simple bounded half-plane $W_2$-modules}
In this subsection we classify all simple bounded  $W_2$-modules $M$ whose supports are half-planes. Namely we give a necessary and sufficient conditions for the modules listed in  (i)--(vi) of Example \ref{ex-par-w2} and Proposition \ref{prop-w2-par} to be bounded. We call modules $M$ in that list ((i)--(vi)) \emph{simple weight half-plane modules}.

We first provide the decomposition of the half-plane tensor modules. It is not surprising that in this case the result is much more simple than the highest weight case described in Proposition \ref{prop-tensor-simple-w2}. The proof of the following proposition is provided in \cite{Cav}.

\begin{proposition}
Let $\nu,\lambda \in \C^2$ be such that ${\rm Int} (\lambda - \nu) = \{i\}$, where  $i=1$ or $i=2$. Let $J \in \mathcal{PM} (\lambda - \nu) $, i.e. $J \in \{ i^{+}, i^{-}\}$. Then $T(\nu,\lambda,J)$ is a simple $W_2$-module if and only $\lambda \neq (1,0)$.The module $T(\nu,(1,0),J)$ has length $2$ with simple submodule isomorphic to  $T(\nu,(0,0),J)$ and simple quotient isomorphic to $T(\nu,(1,1),J)$.
\end{proposition}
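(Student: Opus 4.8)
The plan is to reduce to the case $i=1$, since the automorphism $\sigma$ interchanges the roles of $x_1$ and $x_2$, and to treat first $J=1^+$, where $T(\nu,\lambda,1^+)$ is an honest submodule of $T(\nu,\lambda)$; the case $J=1^-$ is then obtained either by the parallel argument phrased in terms of the lowest $\mathfrak{gl}_2$-weight vector, or by applying the restricted-duality functor $M\mapsto M^*$. The structural backbone is a reduction to the $\A$-modules classified in Section 3. Since the operators $x_1^{k+1}\partial_1$ and $x_1^j x_2\partial_2$ (which span $\A(x_1)\simeq\A$) preserve the $x_2$-weight $s_2\in\nu_2+\Z$, we obtain a decomposition $T(\nu,\lambda,1^+)=\bigoplus_{s_2}V_{s_2}$ into $\A(x_1)$-modules of central charge $s_2$. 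Choosing a weight basis $v_0,\dots,v_d$ of $L_{\mathfrak{gl}}(\lambda)$ with $v_m$ of $\mathfrak{gl}_2$-weight $(\lambda_1-m,\lambda_2+m)$, the subspaces $F_m=\Span\{v_0,\dots,v_m\}$ are stable under $E_{11}$ and $E_{12}$, so they induce an $\A(x_1)$-filtration of each $V_{s_2}$ whose successive quotients are the tensor $\A$-modules $T(\lambda_1-m,\lambda_1-m,s_2,+)$, $0\le m\le d$. Propositions \ref{prop-tens-w1-mod} and \ref{prop-tens-a1-mod} thus describe all the building blocks explicitly.

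Next I would prove simplicity for $\lambda\neq(1,0)$. The guiding principle is that a nonzero submodule $N$ equals $T(\nu,\lambda,1^+)$ as soon as it contains a single pure vector $x^{s}\otimes v_0$. Indeed, since $E_{12}v_0=0$, the operators spanning $\A(x_1)$ preserve $\Span\{x^{(s_1,s_2)}\otimes v_0\}$, which is the simple $\A$-module $T(\lambda_1,\lambda_1,s_2,+)$ (the transverse condition $\mathrm{Int}(\lambda-\nu)=\{1\}$ rules out the only degenerate case $s_2=\lambda_1=0$), so $N$ contains this whole line. The operator $x_2\partial_1$, acting on the fibre as $s_1-E_{11}+E_{21}$, then carries $v_0$ into the components $v_1,\dots,v_d$ while shifting the slice, and the simplicity of each block $T(\lambda_1-m,\lambda_1-m,s_2,+)$ fills out the corresponding half-lines; finally $\partial_2$ and $x_2^2\partial_2$ reach every slice $s_2\in\nu_2+\Z$, giving $N=T(\nu,\lambda,1^+)$. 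It therefore remains to show that every nonzero weight vector generates a submodule containing some pure $v_0$-vector. This is achieved by repeatedly applying $x_1\partial_2$, which acts on the fibre as the raising-type operator $s_2+E_{12}-E_{22}$: iterating it pushes the highest occurring $\mathfrak{gl}_2$-component up to $v_0$, and the genericity of the transverse weight $s_2$ keeps the leading coefficient from vanishing.

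The point where this last step can fail is precisely $\lambda=(1,0)$. Here $d=1$ and $L_{\mathfrak{gl}}(1,0)$ is the standard module; solving the intertwining equations for a fibre-valued assignment $s\mapsto\psi(s)$ whose graph is $W_2$-stable singles out the unique solution $\psi(s)=s_1v_0+s_2v_1$ (up to scalar). Consequently $\phi\colon T(\nu,(0,0),1^+)\to T(\nu,(1,0),1^+)$, $x^s\otimes1\mapsto x^s\otimes(s_1v_0+s_2v_1)$, is a $W_2$-homomorphism; because $\lambda=(1,0)$ forces $\nu_2\notin\Z$, the origin lies outside the support, so $\phi$ is injective and its image is a proper submodule isomorphic to $T(\nu,(0,0),1^+)$. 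The quotient has one-dimensional fibres of $\det$-weight supported on $s_1\ge1$, hence is $T(\nu,(1,1),1^+)$, and the asserted length-two structure follows. For every other dominant $\lambda$ the same equivariance computation has only the zero solution, so no proper submodule can avoid the pure vectors and $T(\nu,\lambda,1^+)$ is simple.

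I expect the main obstacle to be the simplicity half: rigorously showing that the only way the generation argument can fail is through an invariant diagonal section, and that such a section exists exactly for the standard representation $\lambda=(1,0)$. This requires controlling the possible cancellations when iterating $x_1\partial_2$ (handled by the genericity of the transverse weight $s_2$, which stems from $\mathrm{Int}(\lambda-\nu)=\{1\}$) and verifying that the $\A(x_1)$-filtration of each slice is rigid enough that a submodule is detected by its pure-$v_0$ content. By contrast, the identification of the sub- and quotient-modules at $\lambda=(1,0)$, via the map $\phi$ and its $\det$-valued counterpart, is an explicit and routine computation.
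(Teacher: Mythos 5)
A preliminary caveat: the paper gives no proof of this proposition at all --- it defers to the thesis \cite{Cav} --- so your attempt can only be judged on its own merits, not against an in-paper argument. Much of your skeleton is sound: the reduction to $i=1$ via $\sigma$, the decomposition of $T(\nu,\lambda,1^+)$ into $\A(x_1)$-slices of central charge $s_2$, the filtration of each slice by $F_m=\Span\{v_0,\dots,v_m\}$ with graded quotients $T(\lambda_1-m,\lambda_1-m,s_2,+)$, and the analysis at $\lambda=(1,0)$: your map $\phi(x^s\otimes 1)=x^s\otimes(s_1v_0+s_2v_1)$ is exactly the de Rham differential written in tensor-module coordinates, it is $W_2$-equivariant, injective (using $\nu_2\notin\Z$), and its cokernel is $T(\nu,(1,1),1^+)$.

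The genuine gap is the one you yourself flagged, and it is fatal to the argument as written: the claim that every nonzero submodule contains a \emph{pure} vector $x^s\otimes v_0$, obtained by iterating $x_1\partial_2$. This mechanism provably cannot work. The fibre action of $x_1\partial_2$ is $s_2+E_{12}-E_{22}$, which in the basis $v_0,\dots,v_d$ is triangular with diagonal entries $s_2-\lambda_2-m$, and these are \emph{all nonzero} precisely because of the transversality hypothesis $\lambda_2-\nu_2\notin\Z$. Consequently each application of $x_1\partial_2$ multiplies the highest-index component $v_{m_1}$ of a weight vector by a nonzero scalar: iterating spreads the vector downward toward $v_0$ but never kills $v_{m_1}$, so a mixed vector never becomes pure. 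The genericity of $s_2$ that you invoke to protect the leading coefficient is exactly what blocks the step. The operator that does isolate pure components is $\partial_1$, whose fibre action $s_1-E_{11}$ is diagonal with \emph{integer} eigenvalues $s_1-\lambda_1+m$ vanishing at the left edges $s_1=\lambda_1-m$; iterating it on a vector with top component $v_{m_1}$ yields the left-edge pure vector $x^{(\lambda_1-m_1,s_2)}\otimes v_{m_1}$ --- i.e.\ it isolates the \emph{lowest} $\mathfrak{gl}_2$-weight component, the opposite of what your scheme needs --- and one must then generate upward (e.g.\ $x_2\partial_1$ sends left-edge pure $v_m$-vectors to left-edge pure $v_{m+1}$-vectors with coefficient $d-m$), with $\lambda=(1,0)$ emerging in that generation step: for $\lambda_2=0$ the coefficients $\eta_1+k\lambda_2$ for moving right along the $v_d$-line vanish at the left edge, and for $d=1$ the left-edge $v_1$-vector generates exactly $\im\phi$. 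Your fallback dichotomy --- ``either generation succeeds or there is an invariant diagonal section'' --- is also a non sequitur: a proper submodule need not have one-dimensional weight spaces, so ruling out graph-type submodules for $\lambda\neq(1,0)$ does not rule out all proper submodules. (Two smaller unrepaired points: the coefficient $s_2+\lambda_2$ of $x_2^2\partial_2$ on pure $v_0$-vectors can vanish on one slice, so reaching all slices needs the operators $x_2^{k+1}\partial_2$ for varying $k$; and the reduction of $J=1^-$ to $J=1^+$ by restricted duality is asserted without identifying the dual of a half-plane tensor module.)
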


\begin{remark} \label{rem-char-half}
One easily can write character formulae for all tensor half-plane modules $T(\nu,\lambda,J)$. Naturally, these formulas contain more terms than the ones for highest weight modules, see Proposition \ref{prop-char-for}. For example, the character formula for $T(\nu,\lambda, 2^-)$ is:
$$
\ch T(\nu,\lambda, 2^-) = \frac{\left(\sum_{k \in \Z} e^{(\nu_1-\lambda_1 + k) \varepsilon_1 -\varepsilon_2}\right) \ch L_{\mathfrak{gl}} (\lambda)}{1- e^{-\varepsilon_2}}.
$$
\end{remark}

In this section we will use two parabolic induction functors. For a parabolic subalgebra $\mathfrak p = \mathfrak l  \oplus \mathfrak n^+$ of $W_2$ induced from a parabolic subalgebra of $\mathfrak{sl}_3$, and a simple $\mathfrak l$-module $S$ with trivial action of $\mathfrak n^+$, we define $M_{\mathfrak p} (S) = U(W_2) \otimes_{U(\mathfrak p)} S$. Also, by $L_{\mathfrak p} (S)$ we denote the simple quotient of $M_{\mathfrak p} (S)$. Similarly we define the two parabolic induction functors for the algebras $\mathcal A_1$, $\mathfrak{sl}_3$, and $\mathfrak{gl}_2$. We will use numerous times the facts that if $S$ is dense $\mathfrak{sl}_3$- or $\mathfrak{gl}_2$-module, then $S$ is a twisted localization of a bounded highest weight module, and that the twisted localization functors  commute with the parabolic induction functors $M_{\mathfrak p}$ and $L_{\mathfrak p}$, see for example Proposition 6.2 and Lemma 13.2 in \cite{M}. The proof that the twisted localization and the parabolic induction functors commute in \cite{M} concerns the case of a finite-dimensional reductive Lie algebra $\g$, but one naturally extends Mathieu's proof for $W_2$. For further properties and a more detailed exposition of the twisted localization functor, the reader is referred for example to \cite{Gr1}.

We first deal with  the last two cases in the list (i)--(vi) of Example \ref{ex-par-w2}.

\begin{lemma} Let $\mathfrak p = \mathfrak p (12^+)$ or $\mathfrak p = \mathfrak p (12^-)$,  and let $S$ be a simple $\mathfrak p$-module with a trivial action of $\mathfrak n^+$. Assume that the support of $M = L_{\mathfrak p} (S)$ is a half-plane. Then $M$ is not bounded.
\end{lemma}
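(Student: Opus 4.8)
The plan is to reduce the statement to the highest-weight classification of the previous subsection by means of twisted localization. First I would record the structure of $\mathfrak p = \mathfrak p(12^+)$: a direct computation with the root decomposition of $W_2$ shows that its Levi component is $\mathfrak l = \mathfrak a \simeq \mathfrak{gl}_2$ (spanned by the $x_i\partial_j$, $1\le i,j\le 2$) and that $\mathfrak n^+ = \Span\{\partial_1,\partial_2\}$. Consequently a simple $\mathfrak p$-module $S$ with trivial $\mathfrak n^+$-action is nothing but a simple weight $\mathfrak a$-module, and its support lies on the single line $\ell = \lambda + \Z(\varepsilon_1-\varepsilon_2)$. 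Since $\Supp M$ is a half-plane whose boundary is $\ell$, the module $S$ can be neither finite-dimensional nor of highest-weight type: in either of those cases $M$ would be one of the highest-weight $W_2$-modules of the previous subsection, whose support is a proper wedge rather than a full half-plane. Thus $S$ must be a dense (cuspidal) $\mathfrak a$-module, in which both root vectors $x_1\partial_2$ and $x_2\partial_1$ act injectively and $\Supp S = \ell$. The case $\mathfrak p = \mathfrak p(12^-)$ I would dispose of at the outset by the duality $M \mapsto M^*$, which interchanges $\mathfrak p(12^+)$ and $\mathfrak p(12^-) = \mathfrak p(12^+)^-$ and preserves both boundedness and the half-plane shape of the support.

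Next I would argue by contradiction, assuming $M$ is bounded. Because $S$ embeds into $M$ as an $\mathfrak a$-submodule (the image of $1\otimes S$ in the simple quotient), $S$ is then a simple bounded dense $\mathfrak a$-module. By Mathieu's classification of simple bounded weight modules over the reductive algebra $\mathfrak{gl}_2$ (see \cite{M}), such an $S$ is a twisted localization $S \simeq D^{t}_{\langle f\rangle} L_{\mathfrak a}(\mu)$ of a simple highest-weight $\mathfrak a$-module $L_{\mathfrak a}(\mu)$, where $f$ is one of $x_2\partial_1,\, x_1\partial_2$ and $t \notin \Z$. Since $S$ is infinite-dimensional, $f$ must act injectively on $L_{\mathfrak a}(\mu)$, so $L_{\mathfrak a}(\mu)$ is infinite-dimensional; equivalently, $\mu$ is \emph{not} a dominant integral $\mathfrak{gl}_2$-weight for the Borel singled out by $f$.

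I would then transport this decomposition through the induction functor. Using that twisted localization commutes with parabolic induction (Proposition 6.2 and Lemma 13.2 in \cite{M}, which extend to $W_2$) together with transitivity of induction, one obtains
$$M = L_{\mathfrak p}(S) \simeq D^{t}_{\langle f\rangle}\, L_{\mathfrak p}\!\left(L_{\mathfrak a}(\mu)\right) \simeq D^{t}_{\langle f\rangle}\, L_{\mathfrak b}(\mu),$$
where $\mathfrak b$ is the Borel subalgebra of $W_2$ containing $\mathfrak p(12^+)$ together with the root vector $f$; that is, $\mathfrak b = \mathfrak p(2^+,12^+)$ or $\mathfrak b = \mathfrak p(1^+,12^+)$. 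Finally I would conclude: twisted localization only shifts the support and preserves all weight multiplicities, so $M$ is bounded if and only if $L_{\mathfrak b}(\mu)$ is bounded. By Theorem \ref{th-hw-bounded}(i)--(ii), $L_{\mathfrak b}(\mu)$ is bounded precisely when $\mu$ is dominant integral for the corresponding $\mathfrak{gl}_2$-Borel, which is exactly what the density of $S$ forbids. This contradiction shows that $M$ is not bounded.

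The main obstacle I anticipate is the first paragraph: correctly identifying $\mathfrak l$ and $\mathfrak n^+$ and, above all, ruling out the finite-dimensional and highest-weight possibilities for $S$, so that the half-plane hypothesis genuinely forces $S$ to be dense. The commutation of twisted localization with parabolic induction over the infinite-dimensional $W_2$ (rather than over a reductive Lie algebra) is the other point requiring care, although it follows the template of \cite{M} closely enough that I expect no essential difficulty there.
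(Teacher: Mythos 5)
Your proof is correct and follows essentially the same route as the paper's: assume boundedness, note that the half-plane support forces $S$ to be a dense bounded $\mathfrak{gl}_2$-module, write $S$ as a twisted localization of a bounded highest weight module via Mathieu's classification, commute localization with parabolic induction to identify $M$ with a twisted localization of a highest weight $W_2$-module, and then contradict Theorem \ref{th-hw-bounded} because that theorem forces $L_{\mathfrak{gl}}(\mu)$ to be finite dimensional, which is incompatible with density. The only cosmetic difference is that you dispose of $\mathfrak p(12^-)$ by duality, whereas the paper runs the identical argument for both parabolics simultaneously (fixing $f = x_2\partial_1$, so that the resulting Borel is $\mathfrak p(2^+,12^+)$ or $\mathfrak p(1^-,12^-)$, both covered by part (i) of that theorem).
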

\begin{proof} Assume that $M$ is bounded.   In both cases for $\mathfrak p$, the Levi subalgebra of $\mathfrak p$ is $\mathfrak a  \simeq \mathfrak{gl}_2$. Then since the support of $M$ is a half-plane, $S$ is a dense $x_2\partial_1$-injective $\mathfrak a$-module. So, let us consider $\lambda \in \h^*_{\mathfrak a} \simeq \C^2$ and $\nu \in \C^2$ so that $S = D_{\langle x_2\partial_1\rangle}^{\nu} L_{\mathfrak{gl}} (\lambda)$, where recall that $L_{\mathfrak{gl}} (\lambda)$ is the simple highest weight $\mathfrak a$-module relative to the Borel subalgebra  $\mathfrak b_{\mathfrak a}  = \Span \{x_1\partial_2, x_1\partial_1, x_2\partial_2\}$ of $\mathfrak a$. But then
$$
 L_{\mathfrak p} (S) \simeq  L_{\mathfrak p} \left( D_{\langle x_2\partial_1\rangle}^{\nu} L_{\mathfrak{gl}} (\lambda) \right) \simeq  D_{\langle x_2\partial_1\rangle}^{\nu} (L_{\mathfrak b}(\lambda)),
$$
where $\mathfrak b = \mathfrak b_{\mathfrak a} + \mathfrak{n}^+$. Since $L_{\mathfrak b}(\lambda)$ is bounded and  $\mathfrak b = {\mathfrak p}(2^+,12^+)$ or  $\mathfrak b = {\mathfrak p}(1^-,12^-)$, by Theorem \ref{th-hw-bounded}, we have that $\lambda_1 - \lambda_2 \in \mathbb Z_{\geq 0}$. But this implies that $L_{\mathfrak{gl}} (\lambda)$ is finite dimensional which contradicts to the fact that it  is $x_2 \partial_1$-injective.
\end{proof}

For the four remaining cases (i)--(iv) of simple bounded half-plane modules $L_{\mathfrak p} (S)$, the parabolic subalgebra $\mathfrak p$ has Levi component isomorphic to $\mathcal A$. More precisely, we have the following straightforward result.
\begin{lemma} \label{lem-levi-a}
The Levi component of $\mathfrak p = \mathfrak p (1^+)$ and $\mathfrak p = \mathfrak p (1^-)$ is $ \mathcal A (x_2)$, while the Levi component of $\mathfrak p = \mathfrak p (2^+)$ and $\mathfrak p = \mathfrak p (2^-)$ is $ \mathcal A (x_1)$.
\end{lemma}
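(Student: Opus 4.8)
The plan is to compute the two ingredients of the Levi subalgebra $\mathfrak l$ of $\mathfrak p = \mathfrak l \oplus \mathfrak n^+$ directly: the set of roots lying on the boundary line of the defining half-plane, and the corresponding root spaces of $W_2$. First I would record, using the Remark following Example \ref{ex-par-w2}, that $P(1^+) = \mathbb Z_{\leq 0}\varepsilon_1 + \mathbb Z\varepsilon_2$ and $P(1^-) = \mathbb Z_{\geq 0}\varepsilon_1 + \mathbb Z\varepsilon_2$ in the identification of the root lattice of $W_2$ with $\mathbb Z^2$. Writing a root as $\alpha = (\alpha_1,\alpha_2)$, in both cases the Levi root set is $L = P \cap (-P) = \{\alpha \in \Delta_W : \alpha_1 = 0\}$ (equivalently $\mathbb Z L_{\mathfrak s}\cap \Delta_W$); in particular $\mathfrak p(1^+)$ and $\mathfrak p(1^-)$ share the same Levi, so it suffices to treat this single set $L$. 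Since the description of $\Delta_W \cup \{0\}$ forces $\alpha_2 \geq -1$ whenever $\alpha_1 = 0$, we get $L \cup \{0\} = \{(0,m) : m \geq -1\}$.

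Next I would make the root spaces explicit using the weight computation $\wht(x_1^a x_2^b\partial_1) = (a-1,b)$ and $\wht(x_1^a x_2^b\partial_2) = (a,b-1)$. This yields $W_2^{(0,-1)} = \Span\{\partial_2\}$, the Cartan $\mathfrak h_{W_2} = \Span\{x_1\partial_1, x_2\partial_2\}$ at weight $0$, and $W_2^{(0,m)} = \Span\{x_2^{m+1}\partial_2,\; x_1 x_2^{m}\partial_1\}$ for $m \geq 1$. Assembling $\mathfrak l = \mathfrak h_{W_2}\oplus\bigoplus_{\alpha\in L}W_2^\alpha$ and sorting the $\partial_2$-terms from the $x_1\partial_1$-terms, I would read off $\mathfrak l = \Span\{x_2^{m+1}\partial_2 : m \geq -1\}\oplus \Span\{x_1 x_2^{m}\partial_1 : m \geq 0\}$, whose two summands are exactly $W(x_2) = \Der\C[x_2]$ and $\C[x_2](x_1\partial_1)$.

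Finally I would confirm that the Lie structure on $\mathfrak l$ matches the semidirect product $\mathcal A(x_2) = W(x_2)\ltimes(\C[x_2](x_1\partial_1))$, not merely as vector spaces. Using $[f\partial_i, g\partial_j] = f(\partial_i g)\partial_j - g(\partial_j f)\partial_i$ I would check the three bracket types: $[x_2^{k+1}\partial_2, x_2^{m}x_1\partial_1] = m\,x_2^{k+m}x_1\partial_1$, so $W(x_2)$ acts on the ideal by the natural derivation action once $x_2^m x_1\partial_1$ is identified with $x_2^m$; $[x_2^m x_1\partial_1, x_2^n x_1\partial_1] = 0$, so the ideal is abelian; and the $\partial_2$-terms close into the usual Witt algebra. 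This reproduces precisely the defining relations of $\A$, giving $\mathfrak l = \mathcal A(x_2)$. The cases $\mathfrak p(2^+), \mathfrak p(2^-)$ then follow immediately by applying the automorphism $\sigma$ interchanging $x_1$ and $x_2$, which carries $P(1^\pm)$ to $P(2^\pm)$ and $\mathcal A(x_2)$ to $\mathcal A(x_1)$. The only point needing any attention is the boundary root $(0,-1)$, whose root space is one-dimensional (just $\partial_2$), so that no spurious $x_1\partial_1$-term appears there; everything else is a routine bracket computation, which is why the statement is recorded as straightforward.
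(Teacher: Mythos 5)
The paper itself offers no proof of Lemma \ref{lem-levi-a} — it is recorded as a ``straightforward result'' — so there is no argument to compare yours against step by step; your proposal supplies exactly the verification the authors omit, and its conclusion and nearly every step are correct. In particular, the weight bookkeeping ($W_2^{(0,-1)}=\C\partial_2$, $W_2^{(0,m)}=\Span\{x_2^{m+1}\partial_2,\,x_1x_2^{m}\partial_1\}$ for $m\geq 1$), the bracket check identifying $\mathfrak l$ with $W(x_2)\ltimes\C[x_2](x_1\partial_1)$ via $D_i\leftrightarrow x_2^{i+1}\partial_2$, $I_j\leftrightarrow x_1x_2^{j}\partial_1$, the observation that opposite parabolics share the same Levi, and the reduction of the $2^{\pm}$ cases to the $1^{\pm}$ cases by the automorphism $\sigma$ are all correct.

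One step does need repair, though it does not change the outcome: the identity $L=P\cap(-P)$. If $P=P(1^{+})$ is read as the paper defines it, namely as a subset of $\Delta_W$, this identity is false, precisely because $\Delta_W$ is not symmetric (the difficulty the paper flags when introducing parabolic subsets): for $m\geq 2$ one has $(0,m)\in\Delta_W$ but $-(0,m)\notin\Delta_W$, so $P(1^{+})\cap(-P(1^{+}))=\{(0,1),(0,-1)\}$, and $\mathfrak h_{W_2}\oplus W_2^{(0,1)}\oplus W_2^{(0,-1)}$ is not even a subalgebra, since $[x_1x_2\partial_1,\,x_2^{2}\partial_2]=-x_1x_2^{2}\partial_1$ lands in the root space of $(0,2)$. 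What is correct — and what your parenthetical already points to — is the paper's definition of the induced Levi, $L=\Z L_{\mathfrak s}\cap\Delta_W$: for $\mathfrak p(1^{\pm})$ the $\mathfrak{sl}_3$-Levi is $L_{\mathfrak s}=\{\pm(\varepsilon_2-\varepsilon_0)\}$, whose image in $\h_{W_2}^{*}$ spans the line $\Z\varepsilon_2$, and intersecting that line with $\Delta_W$ gives $\{(0,m)\;|\;m\geq -1,\ m\neq 0\}$, which is the set you actually use. Equivalently, one may form $P\cap(-P)$ at the level of the lattice half-planes $\Z_{\leq 0}\varepsilon_1+\Z\varepsilon_2$ and $\Z_{\geq 0}\varepsilon_1+\Z\varepsilon_2$ (as you wrote them) and only afterwards intersect with $\Delta_W$. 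With that substitution, the rest of your argument goes through verbatim.
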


Before we state our classification result for the bounded simple half-plane modules, recall that, by Theorem \ref{th-bounded-a}, every simple dense bounded weight $\A$-module is isomorphic to $T(\nu, \lambda, c)$ for some $\lambda,\nu,c$, such that $\lambda - \nu \notin \Z$. 

\begin{proposition} \label{prop-w2-bnd-nec} Let $\nu, \lambda,c \in \C$ be such that $\lambda - \nu \notin \mathbb Z$. Then the following isomorphisms hold. 
\begin{itemize}
\item[(i)] If $\lambda - c \in \Z_{\geq 0}$ and $(\lambda, c) \neq (1,0)$, then $L_{\mathfrak p (1^+)} T(\nu,\lambda,c) \simeq T((\lambda,\nu),(\lambda,c),1^+)$ and $L_{\mathfrak p (2^+)} T(\nu,\lambda,c) \simeq  T((\nu,\lambda),(\lambda,c),2^+)$.
Moreover, for any $\nu \notin \Z$, \\$L_{\mathfrak p (1^+)} T(\nu,1,0) \simeq  T((0,\nu),(0,0),1^+)$ and $L_{\mathfrak p (2^+)} T(\nu,1,0) \simeq  T((\nu,0),(0,0),2^+)$.

\item[(ii)]  If $c+ 1 - \lambda \in \Z_{\geq 0}$ and $(\lambda,c) \neq (0,0)$, then $L_{\mathfrak p (1^-)} T(\nu,\lambda,c) \simeq  T((\lambda,\nu),(c+1,\lambda),1^-)$ and $L_{\mathfrak p (2^-)} T(\nu,\lambda,c) \simeq  T((\nu,\lambda),(c+1,\lambda),2^-)$. Moreover, for any $\nu \notin \Z$,  $L_{\mathfrak p (1^-)} T(\nu,0,0) \simeq  T((1,\nu),(1,1),1^-)$ and $L_{\mathfrak p (2^-)} T(\nu,0,0) \simeq  T((\nu,1),(1,1),2^-)$.
\end{itemize}
\end{proposition}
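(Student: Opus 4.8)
The plan is to establish the four generic isomorphisms by realizing each target tensor module as a simple quotient of the corresponding induced module, and then to read off the remaining ``Moreover'' clauses, which treat exactly the values $(\lambda,c)=(1,0)$ excluded in (i) and $(\lambda,c)=(0,0)$ excluded in (ii). Both excluded cases have $c=0$, and by Proposition \ref{prop-tens-w1-mod}(i) we have the $\mathcal A$-module isomorphism $T(\nu,1,0)\simeq T(\nu,0,0)$ whenever $\nu\notin\Z$; feeding $T(\nu,0,0)$ into the admissible range of (i) and $T(\nu,1,0)$ into the admissible range of (ii) yields precisely the claimed quadruples. Furthermore, since $\sigma$ interchanges $x_1$ and $x_2$ it carries $\mathfrak p(1^\pm)$ to $\mathfrak p(2^\pm)$ and, because $\sigma$ swaps $E_{12}$ and $E_{21}$ and hence fixes the dominant weight of $L_{\mathfrak{gl}}(\lambda,c)$ (the swap turns its lowest weight into its highest weight again), it sends $T((\lambda,\nu),\boldsymbol\lambda,1^\pm)$ to $T((\nu,\lambda),\boldsymbol\lambda,2^\pm)$. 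Twisting by $\sigma$ therefore deduces the $2^\pm$ statements from the $1^\pm$ statements, so it suffices to treat $\mathfrak p(1^+)$ and $\mathfrak p(1^-)$.

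For $\mathfrak p=\mathfrak p(1^+)$ the Levi component is $\mathcal A(x_2)$ by Lemma \ref{lem-levi-a}, and a direct root computation gives $\mathfrak n^+=\Span\{x_2^k\partial_1\mid k\geq 0\}$. Set $N:=T((\lambda,\nu),(\lambda,c),1^+)$ and let $w$ be a lowest weight vector of $L_{\mathfrak{gl}}(\lambda,c)$, of $\mathfrak{gl}_2$-weight $(c,\lambda)$. I would show that $S_0:=\Span\{x^{(c,\nu+\ell)}\otimes w\mid \ell\in\Z\}$ is a $\mathfrak p(1^+)$-submodule of $N$ on which $\mathfrak n^+$ acts by zero and which is isomorphic, as an $\mathcal A(x_2)$-module, to $T(\nu,\lambda,c)$. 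This is a short computation with the defining action (\ref{def-tensor-action}): one checks $x_2^k\partial_1\cdot(x^{(c,s_2)}\otimes w)=(c-c)\,x^{(c-1,s_2+k)}\otimes w=0$, that $x_1\partial_1$ acts by the scalar $c$ (so the central charge is $c$, not $\lambda$ — this is exactly why the lowest, not the highest, weight vector must be used, the highest weight vector producing both a wrong charge and, via $E_{21}$, extra components), and that $D_k=x_2^{k+1}\partial_2$, $I_k=x_2^k(x_1\partial_1)$ and $D_{-1}=\partial_2$ act by the same formulas as in $T(\nu,\lambda,c)$, using $E_{21}w=0$ and $E_{22}w=\lambda w$. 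Granting this, the inclusion $T(\nu,\lambda,c)\xrightarrow{\sim}S_0\hookrightarrow N$ is a $\mathfrak p(1^+)$-homomorphism, so by the universal property of induction it extends to a nonzero $W_2$-homomorphism $M_{\mathfrak p(1^+)}(T(\nu,\lambda,c))\to N$. Since $\lambda-c\in\Z_{\geq0}$ and $\lambda-\nu\notin\Z$ force $c-\nu\notin\Z$, we get ${\rm Int}((\lambda,c)-(\lambda,\nu))=\{1\}$; together with $(\lambda,c)\neq(1,0)$, the preceding proposition on the structure of half-plane tensor modules shows that $N$ is simple. A nonzero map into a simple module is surjective, so $N$ is a simple quotient of the induced module and hence $N\simeq L_{\mathfrak p(1^+)}T(\nu,\lambda,c)$, proving (i) for $\mathfrak p(1^+)$.

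For $\mathfrak p(1^-)$ I would argue by duality. Comparing root sets shows $\mathfrak p(1^-)=\mathfrak p(1^+)^-$ (same Levi $\mathcal A(x_2)$, opposite nilradical in the $x_1$-direction), so $L_{\mathfrak p(1^-)}(S)\simeq(L_{\mathfrak p(1^+)}(S^*))^*$. A direct computation with the $\mathcal A$-action gives $T(\nu,\lambda,c)^*\simeq T(-\nu,1-\lambda,-c)$, whose parameters satisfy $(1-\lambda)-(-c)=c+1-\lambda\in\Z_{\geq0}$ and $(1-\lambda,-c)\neq(1,0)$ (equivalently $(\lambda,c)\neq(0,0)$), so the already-proved case (i) applies and gives $L_{\mathfrak p(1^+)}T(-\nu,1-\lambda,-c)\simeq T((1-\lambda,-\nu),(1-\lambda,-c),1^+)$. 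Dualizing and invoking the tensor-module duality $T(\boldsymbol\nu,\boldsymbol\lambda,1^+)^*\simeq T(\boldsymbol\nu',\boldsymbol\lambda',1^-)$, obtained from Definition \ref{def-tensor} by dualizing $L_{\mathfrak{gl}}$ and applying the density shift exactly as in the $n=1$ computation, returns $T((\lambda,\nu),(c+1,\lambda),1^-)$, which is (ii) for $\mathfrak p(1^-)$.

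The main obstacle is the bookkeeping of parameters. First, one must identify the generating primitive vector as the $\mathfrak{gl}_2$-lowest weight vector; this is the single point that simultaneously forces the central charge of the induced Levi module to equal $c$ and kills the $E_{21}$-terms, and it is what pins down the first coordinate of the weight data of the target. Second, in case (ii) one must establish the precise $W_2$ tensor-module duality $T(\cdot,\cdot,1^+)^*\simeq T(\cdot,\cdot,1^-)$ and verify that the two successive dualizations reproduce exactly the quadruple $((\lambda,\nu),(c+1,\lambda),1^-)$. Everything else reduces to the finite computation with (\ref{def-tensor-action}) and to the already-available simplicity criterion for half-plane tensor modules.
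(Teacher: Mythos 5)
Your proposal is correct, and its core coincides with the paper's proof: for $\mathfrak p(1^+)$ the paper also identifies the $\mathfrak n^+$-invariants of $T((\lambda,\nu),(\lambda,c),1^+)$ — exactly your span $S_0$ of lowest-weight-vector tensors $x^{(c,\nu+\ell)}\otimes w$ — as an $\A(x_2)$-module copy of $T(\nu,\lambda,c)$, and then concludes via simplicity of the half-plane tensor module and the universal property of induction; the ``Moreover'' clauses are handled in the paper by the same trick you use, namely $T(\nu,1,0)\simeq T(\nu,0,0)$ as $\A$-modules. Where you genuinely diverge is in the remaining three cases: the paper disposes of $\mathfrak p(2^+)$ ``with similar reasoning'' and of all of part (ii) ``in a similar way,'' i.e.\ by repeating the direct invariant computation, whereas you deduce the $2^\pm$ cases by twisting with the automorphism $\sigma$ and part (ii) by restricted duality, using $\mathfrak p(1^-)=\mathfrak p(1^+)^-$, the $\A$-module duality $T(\nu,\lambda,c)^*\simeq T(-\nu,1-\lambda,-c)$, and a $W_2$ tensor-module duality $T(\cdot,\cdot,1^+)^*\simeq T(\cdot,\cdot,1^-)$; your parameter bookkeeping checks out (the dual parameters $(1-\lambda,-c)$ land in the admissible range of (i) precisely when $(\lambda,c)\neq(0,0)$, and dualizing back returns $((\lambda,\nu),(c+1,\lambda),1^-)$). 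The functorial route buys economy — one computation instead of four — but at the cost of three auxiliary facts the paper never states in the needed generality: the duality $L_{\mathfrak p^-}(S)\simeq\bigl(L_{\mathfrak p}(S^*)\bigr)^*$ is recorded in the paper only for Borel subalgebras and highest weight modules, so you would have to prove it for parabolic induction from infinite-dimensional simple Levi modules (standard, via uniqueness of the simple quotient and a weight-support argument), and likewise the two tensor-module dualities require short but honest verifications. Either route is acceptable; yours trades repeated elementary computation for these one-time functorial lemmas.
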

\begin{proof}
We prove (i) for the parabolic subalgebra ${\mathfrak p} (1^+)$. The statements for the remaining three parabolic subalgebras are analogous. Let $\lambda - c \in \Z_{\geq 0}$ and $(\lambda, c) \neq (1,0)$. To show that $L_{\mathfrak p (1^+)} T(\nu,\lambda,c) \simeq  T((\lambda,\nu),(\lambda,c),1^+)$, observe that the nilradical of $\mathfrak p = \mathfrak p (1^+)$ is $\mathfrak n^+ = \Span \{ x_2^k\partial_1 \; | \; k \geq 0\}$. We easily check that  if  $x^s \otimes v \in T((\lambda,\nu),(\lambda,c),1^+)$ is such that $x_2^k \partial_1 (x^s \otimes v) = 0$, then $s_1 =  c$ and the weight of $v$ must be $(c,\lambda)$. Therefore the $\mathfrak n^+$-invariants of $T((\lambda,\nu),(\lambda,c),1^+)$ form an $\A (x_2)$-module isomorphic to $T(\nu,\lambda,c)$, which proves the desired isomorphism for ${\mathfrak p} (1^+)$. The isomorphism for ${\mathfrak p} (2^+)$ follows with similar reasoning. It remains to consider the case $(\lambda,c) =(1,0)$. In this case we use that $T(\nu,1,0)  \simeq T(\nu,0,0)$ as $\A (x_2)$-modules and apply the isomorphism we just proved for $(\lambda,c) =(0,0)$ (possible because $(\lambda,c) \neq (1,0)$). Part (ii) follows in a similar way.
\end{proof}

\begin{theorem} \label{th-half-plane} Let $\nu, \lambda,c \in \C$ be such that $\lambda - \nu \notin \mathbb Z$. The simple weight half-plane module $M \simeq L_{\mathfrak p} T(\nu,\lambda,c)$ is bounded if and only if the following conditions hold. 
\begin{itemize}
\item[(i)] $\lambda - c \in \Z_{\geq 0}$ for $\mathfrak p = \mathfrak p (1^+)$ or $\mathfrak p = \mathfrak p (2^+)$ .

\item[(ii)] $c+ 1 - \lambda \in \Z_{\geq 0}$ for $\mathfrak p = \mathfrak p (1^-)$ or $\mathfrak p = \mathfrak p (2^-)$ . 

\end{itemize}
\end{theorem}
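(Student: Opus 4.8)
The plan is to prove the two implications separately, reducing the four parabolic cases to a single one by symmetry. For the forward (``if'') direction there is essentially nothing to do: if the stated numerical condition holds, then Proposition \ref{prop-w2-bnd-nec} identifies $M = L_{\mathfrak p} T(\nu,\lambda,c)$ explicitly with one of the generalized tensor modules $T(\boldsymbol{\nu}',\boldsymbol{\lambda}',J')$ (with $J'$ of size one), and by the remarks following Definition \ref{def-tensor} every such module is a bounded weight $W_2$-module of degree $\dim L_{\mathfrak{gl}}(\boldsymbol{\lambda}')$. So the entire content lies in the ``only if'' direction.

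\textbf{Cutting down the casework.} For the ``only if'' direction I would first exploit symmetry. The automorphism $\sigma$ interchanges $\mathfrak p(1^{\pm})$ with $\mathfrak p(2^{\pm})$ and preserves boundedness, while the restricted duality $M \mapsto M^*$ sends $L_{\mathfrak p(1^+)} T(\nu,\lambda,c)$ to $L_{\mathfrak p(1^-)}(S^*)$ (using $\mathfrak p(1^+)^- = \mathfrak p(1^-)$ and exactness of duality), under which the parameters transform as $(\lambda,c)\mapsto(1-\lambda,-c)$ and hence the condition $\lambda - c \in \Z_{\geq 0}$ becomes $c+1-\lambda \in \Z_{\geq 0}$. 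Thus it suffices to treat $\mathfrak p = \mathfrak p(1^+)$ and show that boundedness of $M$ forces $\lambda - c \in \Z_{\geq 0}$.

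\textbf{The key step.} The idea is to realize $M$ as a twisted localization of a \emph{highest weight} $W_2$-module and then invoke Theorem \ref{th-hw-bounded}. By Lemma \ref{lem-iso-loc-a}(ii) the dense $\mathcal A$-module factors as $T(\nu,\lambda,c) \simeq D^{\lambda-\nu}_{\langle D_{-1}\rangle} T(\lambda,\lambda,c,-)$, where $T(\lambda,\lambda,c,-)$ is the simple highest weight $\mathcal A$-module and $D_{-1}$ acts injectively on it. By Lemma \ref{lem-levi-a} the Levi of $\mathfrak p(1^+)$ is $\mathcal A(x_2)$, in which $D_{-1}$ is identified with $\partial_2$; since twisted localization at a root vector of the Levi commutes with the functor $L_{\mathfrak p}$ (Mathieu's argument, cf.\ Proposition 6.2 and Lemma 13.2 in \cite{M}, extended to $W_2$), one obtains
$$ M \simeq L_{\mathfrak p(1^+)} D^{\lambda-\nu}_{\langle \partial_2\rangle} T(\lambda,\lambda,c,-) \simeq D^{\lambda-\nu}_{\langle \partial_2\rangle}\, L_{\mathfrak p(1^+)} T(\lambda,\lambda,c,-). $$
Here $L_{\mathfrak p(1^+)} T(\lambda,\lambda,c,-)$ is a simple highest weight $W_2$-module $L_{\mathfrak b}(\mu)$, whose Borel $\mathfrak b$ is assembled from the nilradical of $\mathfrak p(1^+)$ and the Borel of $\mathcal A(x_2)$ corresponding to $\mathfrak b(-)$, and whose highest weight $\mu$ is read off from the $\h_{W_2}$-eigenvalues of the generating vector. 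Because $\partial_2$ acts injectively on the simple module $L_{\mathfrak b}(\mu)$ (it is $\ad$-nilpotent and not locally nilpotent on the Levi-orbit of the highest weight vector, hence injective by the dichotomy recorded in \S2.2), the twisted localization in the displayed isomorphism neither creates nor destroys a uniform bound on weight multiplicities. Consequently $M$ is bounded if and only if $L_{\mathfrak b}(\mu)$ is bounded, and Theorem \ref{th-hw-bounded} converts the latter into precisely the inequality $\lambda - c \in \Z_{\geq 0}$.

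\textbf{The main obstacle.} The hardest part will be the bookkeeping in this last step: identifying exactly which Borel $\mathfrak b$ from the list in Example \ref{ex-par-w2} is produced by $L_{\mathfrak p(1^+)} T(\lambda,\lambda,c,-)$ and computing its highest weight $\mu$, so that the correct clause of Theorem \ref{th-hw-bounded} applies and the shift built into the support of $T(\lambda,\lambda,c,-)$ is accounted for. Everything else---the injectivity of $\partial_2$, the commutation of twisted localization with parabolic induction, and the invariance of boundedness under twisted localization---is routine once this matching of data is pinned down.
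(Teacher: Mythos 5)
Your proposal follows essentially the same route as the paper's own proof: the ``if'' direction read off from Proposition \ref{prop-w2-bnd-nec}, and for ``only if'' the factorization $T(\nu,\lambda,c) \simeq D^{\lambda-\nu}_{\langle \partial_2\rangle} T(\lambda,\lambda,c,-)$ of Lemma \ref{lem-iso-loc-a}(ii), commutation of twisted localization with $L_{\mathfrak p}$, and an appeal to Theorem \ref{th-hw-bounded} -- the paper completes your remaining ``bookkeeping'' by identifying $L_{\mathfrak p(1^+)} T(\lambda,\lambda,c,-) \simeq L_{\mathfrak p(1^+,2^-)}(c,\lambda-1)$ and invoking clause (iv), which gives $(\lambda-1)-c+1 = \lambda - c \in \Z_{\geq 0}$. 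One small correction: your assertion that $T(\lambda,\lambda,c,-)$ is simple fails precisely when $(\lambda,c)=(1,0)$, which is why the paper excludes that case from the localization argument and treats it separately; this is harmless, since for $(\lambda,c)=(1,0)$ the conclusion $\lambda - c \in \Z_{\geq 0}$ holds trivially.
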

\begin{proof}
The ``if'' directions follow from Proposition \ref{prop-w2-bnd-nec}. For  the ``only if'' directions, we prove again the condition only for the parabolic subalgebra $\mathfrak p(1^+)$  and then use similar reasoning for the remaining three parabolic subalgebras. We need to show that if $L_{\mathfrak p (1^+)} T(\nu,\lambda,c) $ is bounded, then $\lambda - c \in \Z_{\geq 0}$. If $(\lambda,c) = (1,0)$ the statement follows from the third isomorphism  of Proposition \ref{prop-w2-bnd-nec}(i). Assume now that $(\lambda,c) \neq (1,0)$. To prove the desired condition, we use that $T(\nu,\lambda,c) \simeq D_{\langle \partial_2 \rangle}^{\lambda - \nu} T(\lambda,\lambda,c,-)$,  see Lemma \ref{lem-iso-loc-a}(ii). Then
$$
L_{\mathfrak p (1^+)} T(\nu,\lambda,c) \simeq L_{\mathfrak p (1^+)} D_{\langle \partial_2 \rangle}^{\lambda - \nu} T(\lambda,\lambda,c,-) \simeq D_{\langle \partial_2 \rangle}^{\lambda - \nu}  L_{\mathfrak p (1^+)} L(\lambda - 1, c,-) \simeq D_{\langle \partial_2 \rangle}^{\lambda - \nu} L_{\mathfrak p (1^+,2^-)} (c, \lambda - 1).
$$
The last isomorphism uses the fact that the Levi subalgebra of $ L_{\mathfrak p (1^+)}$ is $\mathcal A (x_2)$, see Lemma \ref{lem-levi-a}. Hence $L_{\mathfrak p (1^+,2^-)} (c, \lambda - 1)$ is bounded and the condition $\lambda - c \in \Z_{\geq 0}$  follows from Theorem \ref{th-hw-bounded}(iv). \end{proof}

\subsection{Classification of  simple bounded dense  $W_2$-modules}

Recall that $M$ is a dense module if $\Supp M = \lambda + \Z^2$ for some $\lambda$. 
\begin{lemma} \label{lem-12-inj}
Let $M$ be a simple bounded $W_2$-module on which  $x_1 \partial_ 2$ or $x_2 \partial_1$ act finitely. Then the support of $M$ is contained in a horizontal or vertical half-plane. In particular, if $M$ is dense, then  $x_1 \partial_ 2$ and $x_2 \partial_1$ act injectively (hence bijectively) on $M$.
\end{lemma}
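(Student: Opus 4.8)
The plan is to exploit the copy of $\mathfrak{sl}_2$ sitting inside $\mathfrak a\simeq\mathfrak{gl}_2$ and to play local nilpotency against boundedness. Since the automorphism $\sigma$ interchanges $x_1\partial_2$ with $x_2\partial_1$ and interchanges horizontal with vertical half-planes, and since the hypothesis and the conclusion are both $\sigma$-stable, I may assume throughout that $x_1\partial_2$ acts finitely on $M$. Put $e=x_1\partial_2$, $f=x_2\partial_1$, $h=x_1\partial_1-x_2\partial_2$; a direct bracket computation gives $[e,f]=h$, $[h,e]=2e$, $[h,f]=-2f$, so $\mathfrak t:=\Span\{e,f,h\}\simeq\mathfrak{sl}_2$, the $\h_{W}$-weight of $e$ is $\varepsilon_1-\varepsilon_2=(1,-1)$, and $h$ acts on $M^{(\lambda_1,\lambda_2)}$ by the scalar $\lambda_1-\lambda_2$.

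First I would control the restriction of $M$ to $\mathfrak t$. For a fixed coset representative $\mu$, the subspace $V_\mu:=\bigoplus_{k\in\Z}M^{\mu+k(1,-1)}$ is a $\mathfrak t$-submodule, namely a bounded weight $\mathfrak{sl}_2$-module supported in the single $h$-coset $(\mu_1-\mu_2)+2\Z$; hence it has finite length (this is the $\mathfrak{sl}_2$ case underlying Proposition \ref{prop-fin-len}). Because $e$ acts locally nilpotently on $M$, it acts locally nilpotently on every subquotient of $V_\mu$; but the only simple bounded weight $\mathfrak{sl}_2$-modules on which $e$ is locally nilpotent are the finite-dimensional and the highest weight ones, all of which have support bounded above. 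Thus $\Supp V_\mu$ is bounded above in the direction $(1,-1)$: every diagonal string $\{\mu+k(1,-1)\mid k\in\Z\}$ meets $\Supp M$ in a set bounded above.

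This already yields the ``in particular'' clause. If $M$ were dense, with $\Supp M=\lambda+\Z^2$, then for any $\mu\in\Supp M$ the \emph{entire} line $\{\mu+k(1,-1)\mid k\in\Z\}$ would lie in $\Supp M$, contradicting the bound just obtained. Hence no dense $M$ can have $x_1\partial_2$ (nor, by $\sigma$-symmetry, $x_2\partial_1$) acting finitely; since $M$ is simple and these elements are $\ad$-nilpotent, each acts injectively on a dense $M$. Injectivity of both then forces bijectivity: $x_1\partial_2\colon M^{\lambda}\to M^{\lambda+(1,-1)}$ and $x_2\partial_1\colon M^{\lambda+(1,-1)}\to M^{\lambda}$ are injective maps between finite-dimensional spaces, so $\dim M^{\lambda}=\dim M^{\lambda+(1,-1)}$ and both are isomorphisms.

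To finish the half-plane statement I would feed the diagonal bound into the Penkov--Serganova list of Proposition \ref{prop-w2-par}. Matching against that list, the supports on which every $(1,-1)$-string is bounded above and which survive boundedness are exactly the coordinate half-planes $P(1^+)$, $P(2^-)$ and the thin cones contained in one of them: the diagonal half-planes $P(12^\pm)$ and the types $P(1^-)$, $P(2^+)$ have $e$ acting injectively, while the genuinely two-dimensional ``fat'' cones (such as $P(2^-,12^-)$) are discarded because, by Theorem \ref{th-hw-bounded}, the associated highest weight modules are unbounded. Each surviving possibility lies in a horizontal or vertical half-plane. I expect this last step to be the main obstacle: local nilpotency bounds each diagonal string only individually, with a bound varying from string to string, so upgrading these per-string bounds to a single coordinate half-plane genuinely needs the global input of the support classification together with boundedness (to eliminate the fat-cone supports). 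The $\mathfrak{sl}_2$-reduction of the second paragraph is, by contrast, the routine engine of the proof.
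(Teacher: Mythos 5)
Your proposal is correct, and its engine is the same as the paper's: decompose $M$ into $(1,-1)$-strings for the copy of $\mathfrak{gl}_2$ (your $\mathfrak{sl}_2$-triple) inside $W_2$, bound each string using local nilpotency plus finite length, and then match the resulting constraint against the support list of Proposition \ref{prop-w2-par}. (The paper phrases the string bound by counting weights of $(x_2\partial_1)$-primitive vectors instead of invoking the classification of simple bounded $\mathfrak{sl}_2$-modules; these are interchangeable.) Where you genuinely diverge is the elimination of the diagonal types $P(12^{\pm})$, which is the one non-routine step. The paper does \emph{not} kill them by support geometry: it argues that a module of type $12^{\pm}$ is a quotient of $U(W_2)\otimes_{U(\mathfrak p(12^{\pm}))}S$ with $S$ a simple module over the Levi component $\mathfrak a\simeq\mathfrak{gl}_2$ whose support is a full $(1,-1)$-line, and such an $S$ inherits the finite action of $x_2\partial_1$, contradicting its simplicity (a simple dense $\mathfrak{gl}_2$-module is $x_2\partial_1$-injective). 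You instead kill them by reading Proposition \ref{prop-w2-par} as giving supports \emph{exactly} equal to the listed sets, so that a type-(v)/(vi) support contains complete $(1,-1)$-lines and contradicts your per-string bound. That reading is the natural one (otherwise the dense form (xv) would subsume all the others), but be aware this is exactly the point where your shortcut leans on exactness of the support list; the paper's parabolic-induction argument is robust against the weaker ``contained in'' reading, which is presumably why the authors include it. Two smaller remarks: your discarding of the ``fat'' cones such as $P(2^-,12^-)$ via Theorem \ref{th-hw-bounded} is correct in substance but unnecessary --- each such cone has a coordinate half-plane among its defining inequalities (e.g.\ $P(2^-,12^-)\subset\{\beta_2\geq 0\}$), so those supports are already consistent with the conclusion; and your explicit proof of the ``in particular'' clause (density defeats the per-string bound, then mutual injectivity between finite-dimensional weight spaces forces equal dimensions and hence bijectivity) fills in a step the paper leaves implicit.
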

\begin{proof}
Assume that $M$ is not isomorphic to $\C$ and that $x_2 \partial_1$ acts finitely on $M$. To identify the possible types of  $M$ we use  representation theory of  $\mathfrak{gl}_2$. Let $\alpha = \varepsilon_1 - \varepsilon_2$. Recall that  ${\mathfrak a}\simeq \mathfrak{gl}_2$ is the subalgebra of $W_2$ generated by $x_i \partial_i$, $i,j = 1,2$. For a weight $\mu$ in $\Supp M$, consider the $\mathfrak a$-module $M[\mu] = \bigoplus_{k \in \Z} M^{\mu + k \alpha}$. This is a bounded $\mathfrak a$-module on which $x_2 \partial_1$ acts finitely. Then the number of weights of $(x_2\partial_1)$-primitive vectors in  $M[\mu]$ is bounded by $2 \deg M [\mu]$. Hence $M [\mu]$ has  an $(x_2\partial_1)$-maximal weight, say $\mu' = \mu + k \alpha$. Namely $\Supp M [\mu]$ is a subset  of the $\alpha$-half-line $\mu'+ {\mathbb Z}_{\leq 0}\alpha$. Thus the set $\left( \mu+ {\mathbb Z}_{\leq 0}\alpha\right) \cap \Supp M$ is also on the  $\alpha$-half-line $\mu'+ {\mathbb Z}_{\leq 0}\alpha$.  By Proposition \ref{prop-w2-par},  the possible supports of $M$ with empty $\alpha$-half-lines, are contained either in a horizontal, or a vertical, or a diagonal (i.e. $M$ is of type $12^+$  or $12^-$) half-plane. Assume now that  $M$ is of type $12^+$ (the case of  type $12^-$ is analogous). Then $M$ is a quotient of $U(W_2)\otimes_{U(\mathfrak p (12^+))} S$ for some simple $\mathfrak p (12^+)$-module $S$ whose support is a whole $\alpha$-line.  But, on the other hand,  $M$, and therefore $S$, is $x_2\partial_1$-finite. Thus $S$ can not be simple, which is a contradiction. \end{proof}

\begin{lemma} \label{lem-3-cases} Let $M$ be  a simple bounded dense $W_2$-module.

Then there is $\nu \notin {\mathbb Z}$ such that $M \simeq D_{\langle x_1\partial_2 \rangle}^{\nu} M_0$, where
\begin{itemize}
\item[(i)]  $M_0 = T(\nu',\lambda', 2^-)$ for some $\nu', \lambda'$ with $\lambda'_1- \nu'_1 \notin \Z$,  $\lambda'_2 - \nu'_2 \in \Z$, or
\item[(ii)]  $M_0 = T(\nu',\lambda', 1^+)$ for some $\nu', \lambda'$   with  $\lambda'_1- \nu'_1 \in \Z$,  $\lambda'_2 - \nu'_2 \notin \Z$, or
\item[(iii)]  $M_0 = T(\nu',\lambda', (1^+, 2^-))$  for some $\nu', \lambda'$  with $\lambda'_1- \nu'_1  \in \Z$,  $\lambda'_2 - \nu'_2  \in \Z$.
\end{itemize}
\end{lemma}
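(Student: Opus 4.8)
The plan is to carry out the exact $W_2$-analogue of the argument proving Lemma \ref{lem-2}, using the $\mathfrak{sl}_2$-triple
\[
e = x_2\partial_1,\qquad h = x_2\partial_2 - x_1\partial_1,\qquad f = x_1\partial_2
\]
inside $\mathfrak a \simeq \mathfrak{gl}_2$ (one checks $[e,f]=h$, $[h,e]=2e$, $[h,f]=-2f$) in place of the triple $\{D_{-1},I_0,I_1\}$ used there. Since $M$ is dense, Lemma \ref{lem-12-inj} shows that $f=x_1\partial_2$ acts bijectively on $M$, so $D_{\langle f\rangle}M\simeq M$ and the twisted localizations $D^{-\nu}_{\langle f\rangle}M$ are defined for all $\nu\in\C$. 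The goal is to find $\nu\notin\Z$ for which $D^{-\nu}_{\langle f\rangle}M$ contains a non-dense simple $W_2$-submodule $M_0$; Lemma \ref{lem-tw-loc-simple} will then give $M\simeq D^{\nu}_{\langle f\rangle}M_0$, and the already-established classification will identify $M_0$.

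First I would locate a vector killed by $e$ in a suitable twist. Fixing a weight space $M^\lambda$ and an eigenvector $v\in M^\lambda$ of the weight-$0$ operator $fe$, the twisted-localization formula together with $(\ad f)(e)=-h$ and $(\ad f)^2(e)=-2f$ gives
\[
e\,(f^{-\nu}v) = f^{-\nu}\Big(e - \nu\,h f^{-1} - \nu(\nu-1)\,f^{-1}\Big)v,
\]
and evaluating the right-hand side on $v$ (using $hv=(\lambda_2-\lambda_1)v$ and the eigenvalue of $fe$) turns the condition $e(f^{-\nu}v)=0$ into a quadratic equation in $\nu$. This is the $\mathfrak{sl}_2$-counterpart of the linear choice $\nu=-\alpha/c$ in Lemma \ref{lem-2}(i). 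The step I expect to be the main obstacle is exactly here: I must show that, as $\lambda$ ranges over the dense support and $fe$ ranges over its eigenvalues, one can choose a root of this quadratic with $\nu\notin\Z$. Integer roots must be discarded, since for $\nu\in\Z$ one only has $D^{-\nu}_{\langle f\rangle}M\simeq M$, which is dense and (by Lemma \ref{lem-12-inj}) carries no $e$-finite vectors; the existence of a genuinely non-integer $\nu$ is what the $\mathfrak{sl}_2$-module theory underlying Mathieu's twisted localization guarantees, and it is the heart of the lemma.

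With such $\nu$ fixed, set $N:=D^{-\nu}_{\langle f\rangle}M$ and let $N^{\langle e\rangle}$ be the set of vectors on which $e=x_2\partial_1$ acts finitely. As $e$ is $\ad$-nilpotent in $W_2$, $N^{\langle e\rangle}$ is a $W_2$-submodule, and it is nonzero by the previous step. Because $N$ is bounded with support in a single coset of $\Z^2$, Proposition \ref{prop-fin-len} shows $N^{\langle e\rangle}$ has finite length, so I may choose a simple submodule $M_0\subseteq N^{\langle e\rangle}$. By construction $e$ acts finitely and $f$ acts injectively on $M_0$. Since $e$ has weight $\varepsilon_2-\varepsilon_1$, its finiteness bounds $\Supp M_0$ in the $(-\varepsilon_1+\varepsilon_2)$-direction, while the injectivity of $f$ keeps $\Supp M_0$ full along $\varepsilon_1-\varepsilon_2$; applying Lemma \ref{lem-12-inj} and matching against Proposition \ref{prop-w2-par}, I would conclude that $\Supp M_0$ is a vertical half-plane of type $2^-$, a horizontal half-plane of type $1^+$, or a quarter-plane of type $(1^+,2^-)$, according as ${\rm Int}(\lambda'-\nu')$ is $\{2\}$, $\{1\}$, or $\{1,2\}$. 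The opposite signs and the dense case are excluded precisely because $e$ must act finitely while $M_0$ is not dense.

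Finally, $M_0$ is a simple bounded $W_2$-module with support of one of these three shapes, so the classification already obtained identifies it with a tensor module: Theorem \ref{th-half-plane} and Proposition \ref{prop-w2-bnd-nec} yield $M_0\simeq T(\nu',\lambda',2^-)$ or $M_0\simeq T(\nu',\lambda',1^+)$ in the half-plane cases, and Theorem \ref{th-hw-bounded} yields $M_0\simeq T(\nu',\lambda',(1^+,2^-))$ in the quarter-plane case, with the asserted integrality of $\lambda'-\nu'$ built into the definition of these modules. Lemma \ref{lem-tw-loc-simple}, applied with $a=f$ and $z=-\nu$, then gives $M\simeq D^{\nu}_{\langle f\rangle}M_0$, which is the claim. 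Everything after the quadratic step is bookkeeping against the earlier classifications, so the real difficulty is confined to producing the non-integer $\nu$ that creates the $e$-finite submodule.
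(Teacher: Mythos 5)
Your proposal is correct and follows essentially the same route as the paper's proof: localize at $f=x_1\partial_2$, impose the same quadratic condition on $\nu$ (your formula, after evaluating $h$ on the weight vector, is exactly the paper's (\ref{eq-comp-loc})), extract a simple submodule of the $x_2\partial_1$-finite part via Proposition \ref{prop-fin-len}, recover $M$ by Lemma \ref{lem-tw-loc-simple}, and identify $M_0$ through Lemma \ref{lem-12-inj} together with the earlier classification results. The only discrepancy is that the step you single out as the ``heart of the lemma'' is not an obstacle: the quadratic always has roots in $\C$, and, as you yourself observe, an integer root would produce an $x_2\partial_1$-primitive vector inside the dense simple module $M$, contradicting Lemma \ref{lem-12-inj} --- so every root is automatically non-integral, with no appeal to further $\mathfrak{sl}_2$-theory needed (which is why the paper passes over this point in silence).
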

\begin{proof}
By Lemma \ref{lem-12-inj},   $x_1\partial_2$ and $x_2\partial_1$ act injectively on $M$. 
Let $\lambda = (\lambda_1, \lambda_2)$ be in $\Supp M$ and consider the module $D_{\langle x_1\partial_2 \rangle}^{-\nu} M$ for any $\nu \in \C$. For any $m \in M^{\lambda}$ we have
\begin{equation} \label{eq-comp-loc}
x_2\partial_1 \left( (x_1 \partial_2)^{-\nu} m\right) = (x_1 \partial_2)^{-\nu} \left( x_2\partial_1 + \nu(\lambda_1 - \lambda_2 - \nu -1) (x_1\partial_2)^{-1}\right)m.
\end{equation}
Consider the endomorphism $(x_1\partial_2)(x_2\partial_1)|_{M^{\lambda}}$ and choose an eigenvector $m$ with eigenvalue $x$. If we choose now $\nu$ to be a root of  $x + \nu(\lambda_1 - \lambda_2 - \nu -1) = 0$, we have that $x_2\partial_1 \left( (x_1 \partial_2)^{-\nu} m\right) = 0$. In particular, the submodule $\left( D_{\langle x_1\partial_2 \rangle}^{-\nu} M \right)^{\langle x_2 \partial_1\rangle}$ consisting of all $x_2\partial_1$-finite vectors in $D_{\langle x_1\partial_2 \rangle}^{-\nu} M$ is nonzero. Since this is a bounded module, by Proposition \ref{prop-fin-len} it has a simple submodule $M_0$. Then by Lemma \ref{lem-tw-loc-simple}, $M \simeq D_{\langle x_1\partial_2 \rangle}^{\nu} M_0$. Since, $M_0$ is bounded, $(x_1 \partial_2)$-injective, and $(x_2\partial_1)$-finite, then by  Lemma \ref{lem-12-inj}, $\Supp M_0$ is contained in a horizontal or vertical half-plane. But we classified all such modules in the last two subsections. After applying Theorem \ref{th-hw-bounded}, Proposition \ref{prop-w2-bnd-nec}, and Theorem \ref{th-half-plane}, we show that $M_0$ is indeed (exactly) one of the three types  listed in (i)--(iii).
\end{proof}

To achieve our goal, it remains to show that the modules $ D_{\langle x_1\partial_2 \rangle}^{\nu} M_0$ for all $M_0$ listed in (i)--(iii) of Lemma \ref{lem-3-cases} are tensor $W_2$-modules. The strategy is to identify each $M_0$ as a submodule of a twisted localization $ D_{\langle x_1\partial_2 \rangle}^{-\nu} T(s,\lambda)$ of a dense tensor module $T(s,\lambda)$. For this we will use that $M_0$ can easily be detected by the weights of its  $(x_2\partial_1)$-primitive  vectors. More precisely, for an associative algebra $\mathcal U$, $u \in \mathcal U$, and a weight $\mathcal U$-module $M$ (recall the definition of a weight $\mathcal U$-module in  \S\ref{subsec-wht}), let 
$$
\mbox{WP}_{M} (u) = \{\lambda \; | \; \exists m \in M^{\lambda}: um=0\}
$$ 
be the set of weights of all $u$-primitive weight vectors in $M$. If $M$ is fixed we will write $\mbox{WP} (u)$ for $\mbox{WP}_M (u)$. The following lemma concerns the case of  $\mathcal U = U(\mathfrak{gl}_2)$ and it follows from the representation theory of $\mathfrak{gl}_2$.
\begin{lemma} \label{lem-gl-2}
Let $\g = \mathfrak{gl}_2$, $\alpha = \varepsilon_1 - \varepsilon_2$,  $e \in \g^{\alpha}$, and $f \in \g^{-\alpha}$. If  $M$ is a weight $\mathfrak{gl}_2$-module for which $\ch M = \frac{e^{\lambda}}{1 - e^{\alpha}}$, then $\mbox{WP}_{D_{\langle e\rangle} M} (f) =  \{ \lambda, s_{\alpha}\cdot \lambda\}$ if $\lambda_1 - \lambda_2 \in \Z$ and $\mbox{WP}_{D_{\langle e\rangle} M} (f) =  \{ \lambda \}$, otherwise. Here $s_{\alpha} \cdot (\lambda_1,\lambda_2) = (\lambda_2-1,\lambda_1+1)$.
\end{lemma}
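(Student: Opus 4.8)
The plan is to reduce the statement to a rank-one ($\mathfrak{sl}_2$) computation and then read off the answer from the vanishing locus of the structure constants of the $f$-action on the localized module.

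First I would restrict attention to the single $\alpha$-string carrying $M$. Write $h = [e,f] \in \h$ for the coroot of $\alpha$ and let $z$ denote the central element of $\g = \mathfrak{gl}_2$. The hypothesis $\ch M = e^{\lambda}/(1 - e^{\alpha})$ says exactly that $M^{\lambda + k\alpha}$ is one-dimensional for $k \geq 0$ and zero otherwise, that $z$ acts by the scalar $\lambda_1 + \lambda_2$ on all of $M$, and that $f$ annihilates a generator $v_0$ of $M^{\lambda}$ (since $M^{\lambda - \alpha} = 0$). Hence $M$ is cyclic, generated from $v_0$ by $e$, and comparing characters identifies it with the lowest-weight Verma module of $\mathfrak{sl}_2$ with $\langle \lambda, h\rangle = \lambda_1 - \lambda_2 =: m$. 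In particular $e$ acts injectively, so $M$ embeds in $D_{\langle e\rangle}M$ and the family $v_k := e^k v_0$, $k \in \Z$, is a basis of $D_{\langle e\rangle}M$ with $v_k \in M^{\lambda + k\alpha}$.

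Next I would compute the $f$-action on this basis. By weight reasons $f v_k = c_k v_{k-1}$ for scalars $c_k$, with $c_0 = 0$. Applying the relation $[e,f] = h$ to $v_k$ yields the recursion $c_k - c_{k+1} = m + 2k$, whose solution with $c_0 = 0$ is the quadratic $c_k = -k(m + k - 1)$; this is first derived on $M$ itself (for $k \geq 0$) and then seen to propagate to all $k \in \Z$ in $D_{\langle e\rangle}M$, since the recursion is the only constraint there. Because $D_{\langle e\rangle}M$ has all its weights on the line $\lambda + \Z\alpha$ with one-dimensional weight spaces, a weight $\lambda + k\alpha$ lies in $\mbox{WP}(f)$ precisely when $c_k = 0$.

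Finally I would solve $c_k = 0$. The quadratic vanishes exactly at $k = 0$ and $k = 1 - m$. The root $k = 0$ always contributes the weight $\lambda$, while $k = 1 - m$ contributes a genuine weight vector only when $1 - m \in \Z$, i.e. when $m = \lambda_1 - \lambda_2 \in \Z$; a short check then identifies the weight $\lambda + (1-m)\alpha$ with the dot-reflection $s_\alpha \cdot \lambda$. This gives $\mbox{WP}(f) = \{\lambda, s_\alpha \cdot \lambda\}$ when $\lambda_1 - \lambda_2 \in \Z$ and $\mbox{WP}(f) = \{\lambda\}$ otherwise. The one genuine subtlety — the main obstacle — is that the second primitive vector is usually not present in $M$ itself but is created by the localization (for $m > 1$ it sits at the negative index $k = 1 - m$), so one must genuinely extend the structure constants across all of $\Z$ and confirm that the localized module produces no further $f$-primitive weights; the integrality dichotomy then falls out of whether the second zero of $c_k$ is an integer.
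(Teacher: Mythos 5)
Your strategy (reduce to the $\alpha$-string, compute the structure constants $c_k$ of the $f$-action on the localized module, read off the primitive weights from the zero locus) is sound, and it is a reasonable way to proceed given that the paper offers no argument for this lemma beyond citing representation theory of $\mathfrak{gl}_2$. Your recursion and its solution $c_k=-k(m+k-1)$, $m=\lambda_1-\lambda_2$, with zeros at $k=0$ and $k=1-m$, are correct. The problem is the last step: the asserted ``short check'' is false. With $\alpha=(1,-1)$ one has $\lambda+(1-m)\alpha=(\lambda_1+1-m,\lambda_2-1+m)=(\lambda_2+1,\lambda_1-1)$, whereas the lemma defines $s_\alpha\cdot\lambda=(\lambda_2-1,\lambda_1+1)$; these two weights differ by $2\alpha$ and are never equal. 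Concretely, for $\lambda=(1,-1)$ (so $m=2$) your computation places the second $f$-primitive vector at weight $(0,0)$ (index $k=-1$), while the printed formula claims $(-2,2)$ (index $k=-3$), where $c_{-3}=-6\neq 0$. So as written your argument does not prove the statement as printed; it proves the statement with $s_\alpha\cdot\lambda$ replaced by $s_\alpha(\lambda-\rho)+\rho=(\lambda_2+1,\lambda_1-1)$, $\rho=\alpha/2$, i.e.\ the $\rho$-shift adapted to lowest-weight strings. (The printed formula $s_\alpha(\lambda+\rho)-\rho$ is the answer for the mirror setup: a module with character $e^{\lambda}/(1-e^{-\alpha})$ localized at $f$, looking for $e$-primitive vectors.) In fact, the way Lemma \ref{lem-prim-wht}(iii) is used forces the primitive weights into ${\rm Hor}\cup{\rm Ver}$, and only the version your computation yields is compatible with that, so the discrepancy appears to be a sign slip in the paper's statement of Lemma \ref{lem-gl-2}; but a correct proof must identify and resolve this discrepancy explicitly rather than assert an identification that is false.

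There is also a smaller gap at the beginning: the character hypothesis alone does not make $M$ cyclic, so ``comparing characters identifies it with the lowest-weight Verma module'' is not justified. For example, when $m\in\Z_{\leq 0}$ the direct sum of the $(1-m)$-dimensional simple module with lowest weight $\lambda$ and the Verma-type lowest weight module with lowest weight $\lambda+(1-m)\alpha$ has character exactly $e^{\lambda}/(1-e^{\alpha})$ but is not generated by its $\lambda$-weight space. The repair is not hard: the $e$-torsion submodule $M'$ is necessarily finite dimensional, it is annihilated by $D_{\langle e\rangle}$, and the quotient $M/M'$ is $e$-torsion-free, hence (by your argument, which is valid in the torsion-free case) isomorphic to a Verma-type lowest weight module whose lowest weight is either $\lambda$ or $\lambda+(1-m)\alpha$; in both cases the two zeros of the resulting quadratic produce the same two-element set of weights, so the conclusion is unchanged. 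Alternatively, one can note that in all of the paper's applications the module $M$ is simple, and simplicity does force $M$ to be the Verma-type module, which is the cleanest way to close this gap.
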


For convenience, in what follows we fix $\alpha = \varepsilon_1 - \varepsilon_2$ as a root of $\mathfrak a = \Span \{x_i\partial_j\; | \; i,j =1,2 \}$. To describe sets of  weights of primitive vectors of localized tensor modules we introduce some subsets of $\C^2$. For $y,z_1,z_2 \in \C$ with $z_1 - z_2 \in \Z_{\geq 0}$, set: \\$\mbox{Hor}(y,[z_1,z_2]) = \left(y + \Z\right) \times \left( [z_1,z_2] \cap (z_1 + \Z) \right)$ (horizontal strip in $(y,z_1) + \Z^2$), and  $\mbox{Ver}([z_1,z_2], y) =  \left( [z_1,z_2]  \cap (z_1 + \Z) \right) \times \left(y + \Z\right ) $ (vertical strip in $(z_1,y) + \Z^2$).


\begin{lemma} \label{lem-prim-wht} Let $M = D_{\langle x_1\partial_2 \rangle}M_0$ and $M_0$ be one of the three modules listed in (i)--(iii) in Lemma \ref{lem-3-cases}. Then the following hold. 
\begin{itemize}
\item[(i)] If $M_0 = T(\nu, \lambda, 2^-)$, then $\mbox{WP}_{M} (x_2\partial_1) = {\rm Hor} (\nu_1, [\lambda_1-1, \lambda_2-1])$.
\item[(ii)] If $M_0 = T(\nu, \lambda, 1^+)$, then $\mbox{WP}_{M} (x_2\partial_1) = {\rm Ver} ([\lambda_1, \lambda_2], \nu_2)$.
\item[(iii)] If $M_0 = T(\nu, \lambda, (1^+, 2^-))$, then $\mbox{WP}_{M} (x_2\partial_1) \subset {\rm Hor} (\nu_1, [\lambda_1-1, \lambda_2-1]) \cup {\rm Ver} ([\lambda_1, \lambda_2], \nu_2)$.
\end{itemize}
\end{lemma}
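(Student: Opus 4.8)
The plan is to exploit that both $x_1\partial_2$ and $x_2\partial_1$ lie in the subalgebra $\mathfrak a\simeq\mathfrak{gl}_2$, so that $\mbox{WP}_M(x_2\partial_1)$ is really a statement about $M$ viewed as a weight $\mathfrak{gl}_2$-module. First I would record the $\mathfrak a$-structure of each $M_0$. Writing $e=x_1\partial_2\in\mathfrak a^{\alpha}$ and $f=x_2\partial_1\in\mathfrak a^{-\alpha}$ with $\alpha=\varepsilon_1-\varepsilon_2$, a direct computation from (\ref{def-tensor-action}) shows that the substitution $x^{\eta}\otimes v_{\mu}\mapsto x^{\eta-\mu}\otimes v_{\mu}$ identifies $M_0$, as an $\mathfrak a$-module, with $F_0\otimes L_{\mathfrak{gl}}(\lambda)$ carrying the diagonal action, where $F_0$ is the submodule (resp.\ subquotient) of the function module $x^{\bullet}{\mathbb C}[x^{\pm 1}]$ cut out by the sign conditions defining $M_0$: in the coordinate $\rho=\eta-\mu$ these are $\rho_2\le -1$ in (i), $\rho_1\ge 0$ in (ii), and $\rho_1\ge 0,\ \rho_2\le -1$ in (iii); on $F_0$ one has $f(x^{\rho})=\rho_1 x^{\rho-\alpha}$ and $e(x^{\rho})=\rho_2 x^{\rho+\alpha}$. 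Since $D_{\langle e\rangle}$ is computed inside $U(\mathfrak a)$ and is compatible with the decomposition of a weight module into its $\alpha$-strings, it suffices to analyze $M$ one $\alpha$-line at a time.

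Next I would decompose $F_0\otimes L_{\mathfrak{gl}}(\lambda)$ into $\alpha$-lines. As the weights of $L_{\mathfrak{gl}}(\lambda)$ all lie on a single $\alpha$-string, each $\alpha$-line of $M_0$ has the form $N_{s'}\otimes L_{\mathfrak{gl}}(\lambda)$, where $N_{s'}$ is the $\alpha$-line $\{\rho_1+\rho_2=s'\}$ of $F_0$. The structural point I would establish is that every such $N_{s'}$ is an \emph{irreducible} lowest weight $\mathfrak{gl}_2$-module of character $e^{\gamma'}/(1-e^{\alpha})$: the defining sign conditions force the lowest weight $\gamma'$ onto a boundary edge ($\rho_2=-1$ in (i), $\rho_1=0$ in (ii), one of the two edges in (iii)), $e$ never vanishes on $N_{s'}$, and in the quadrant case (iii) the restriction $\rho_1\ge 0$ is exactly what deletes the finite-dimensional constituent that would otherwise appear on an integral line. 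Applying Lemma \ref{lem-gl-2} to $N_{s'}$ and passing its output through the $f$-action on $D_{\langle e\rangle}(N_{s'}\otimes L_{\mathfrak{gl}}(\lambda))$, I would show that the $f$-primitive weights on this line are precisely $\gamma'+\mu$ (together with $(s_{\alpha}\!\cdot\gamma')+\mu$ in the integral case) as $\mu$ ranges over the weights of $L_{\mathfrak{gl}}(\lambda)$. Concretely, on each weight space this reduces to a short triangular recursion $c_k a_{\rho^{(k)}}+c_{k-1}\beta_{k-1}=0$, whose solvability is controlled exactly by the vanishing locus of $f$ on the localized line $D_{\langle e\rangle}N_{s'}$ furnished by Lemma \ref{lem-gl-2}.

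Finally I would read off the three cases. In (i) every line has its lowest weight on the edge $\rho_2=-1$, so $\gamma'=(\cdot,-1)$, and since $\lambda_1-\nu_1\notin{\mathbb Z}$ the reflection term is absent; the shift by $\mu=\lambda-k\alpha$ pins the second coordinate to $\{\lambda_2-1,\dots,\lambda_1-1\}$ while the first runs over $\nu_1+{\mathbb Z}$, giving exactly ${\rm Hor}(\nu_1,[\lambda_1-1,\lambda_2-1])$. In (ii) the edge is $\rho_1=0$, so $\gamma'=(0,\cdot)$, again with no reflection, and the first coordinate is pinned to $\{\lambda_2,\dots,\lambda_1\}$, giving exactly ${\rm Ver}([\lambda_1,\lambda_2],\nu_2)$. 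In the integral case (iii) both edge types occur and Lemma \ref{lem-gl-2} contributes, on each line, the lowest weight $\gamma'$ and its reflection $s_{\alpha}\!\cdot\gamma'$: the $\rho_2=-1$ corner (and the reflection partner of the $\rho_1=0$ corner) lands in the horizontal strip, while the $\rho_1=0$ corner (and the reflection partner of the $\rho_2=-1$ corner) lands in the vertical strip, so every $f$-primitive weight lies in ${\rm Hor}\cup{\rm Ver}$, which is the claimed containment. The main obstacle is exactly this last case: one must verify that the $\alpha$-lines of the quadrant module are genuinely irreducible lowest weight modules, so that Lemma \ref{lem-gl-2} applies line by line and the localization annihilates all finite-dimensional constituents, and then check that both the lowest weight and its reflection, after the shift by the weights of $L_{\mathfrak{gl}}(\lambda)$, fall into the two strips. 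Any coincidences between the two families only delete weights, so they do not affect the inclusion.
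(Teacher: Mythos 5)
Your proposal is correct, and although it shares the paper's skeleton --- restrict to $\mathfrak a\simeq\mathfrak{gl}_2$, split into $\alpha$-lines, and feed each line into Lemma \ref{lem-gl-2} --- the way you analyze a line is genuinely different, and the difference is exactly where the content lies. The paper computes the character of the line $M_0[s]$, matches it with a sum of characters of simple $\mathfrak{gl}_2$-modules, and invokes distinctness of central characters to conclude that $M_0[s]$ \emph{is} that direct sum, so that Lemma \ref{lem-gl-2} can be applied summand by summand; this is precisely why the paper's argument degrades in case (iii), where central characters can coincide, semisimplicity of the line may fail, and only an inclusion survives. You instead factor $M_0$ as an $\mathfrak a$-module into $F_0\otimes L_{\mathfrak{gl}}(\lambda)$ (your intertwiner $x^{\eta}\otimes v_{\mu}\mapsto x^{\eta-\mu}\otimes v_{\mu}$ does check out against (\ref{def-tensor-action}): the correction terms $-E_{22}$ and $-E_{11}$ in the restricted actions of $x_1\partial_2$ and $x_2\partial_1$ are exactly absorbed by the shift), observe that every $\alpha$-line of $F_0$ is an irreducible lowest weight module $N_{s'}$ with lowest weight on a boundary edge, and then find the $f$-primitive vectors of $D_{\langle e\rangle}(N_{s'}\otimes L_{\mathfrak{gl}}(\lambda))\simeq (D_{\langle e\rangle}N_{s'})\otimes L_{\mathfrak{gl}}(\lambda)$ by hand, via the triangular recursion $c_j a_{m+j}+c_{j-1}(n-j+1)=0$, which has a nonzero solution if and only if some $a_{m+j}$ vanishes, i.e. if and only if the weight in question is a weight of $L_{\mathfrak{gl}}(\lambda)$ shifted by a point of $\mbox{WP}_{D_{\langle e\rangle}N_{s'}}(f)$. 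This never uses semisimplicity of $N_{s'}\otimes L_{\mathfrak{gl}}(\lambda)$, so all three cases are treated uniformly; in fact your method yields the \emph{exact} set $\mbox{WP}_{M}(x_2\partial_1)$ in case (iii) as well (in general a proper subset of ${\rm Hor}\cup{\rm Ver}$), which is strictly more than the stated inclusion.

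One caveat you should make explicit. Your case (iii) bookkeeping --- the reflection partner of the $\rho_2=-1$ corner lands in the vertical strip, and that of the $\rho_1=0$ corner in the horizontal strip --- is true only if the reflection attached to a lowest weight $\gamma'$ is $(\gamma'_2+1,\gamma'_1-1)$, the linkage appropriate to lowest weight modules; with the formula $s_{\alpha}\cdot(\lambda_1,\lambda_2)=(\lambda_2-1,\lambda_1+1)$ as literally printed in Lemma \ref{lem-gl-2}, the reflection of the corner $(s'+1,-1)$ would be $(-2,s'+2)$, which lies in neither strip and would break the containment. A direct computation settles this in your favor: in $D_{\langle e\rangle}N_{s'}$ the vector $e^{1-m_0}u_0$, with $u_0$ the lowest weight vector and $m_0=\gamma'_1-\gamma'_2$, satisfies $f\,e^{1-m_0}u_0=-\left(1-m_0\right)\left(m_0+(1-m_0)-1\right)e^{-m_0}u_0=0$ and has weight $(\gamma'_2+1,\gamma'_1-1)$, so the second primitive weight is $(\gamma'_2+1,\gamma'_1-1)$, not $(\gamma'_2-1,\gamma'_1+1)$. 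So your argument goes through verbatim once you record that you are using Lemma \ref{lem-gl-2} with the reflection read as $(\lambda_2+1,\lambda_1-1)$; the printed formula appears to carry the highest-weight convention by typo.
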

\begin{proof}
For part (i) we look at the character formula for $M_0$, see Remark \ref{rem-char-half}. More precisely, if $s \in \Supp M_0$, then the character of the $\mathfrak{a}$-module $M_0[s] = \oplus_{k \in \Z} M^{s+ k\alpha}$ equals the character of the $\mathfrak{gl_2}$-module $L_0 = \bigoplus_{i=0}^{\lambda_1-\lambda_2} L_{\mathfrak{gl}}(s_1+s_2 - \lambda_1- i+1, \lambda_1+i-1)$. But since the central characters of the  direct summands of $L_0$ are distinct, we have $M_0 [s] \simeq L_0$. Hence $\mbox{WP}_{M_0[s]} (x_2\partial_1)  = \mbox{WP}_{L_0} (E_{21})$.  We use  the identity  $M_0 = \bigoplus_{\ell \in \Z} M_0[s+\ell\varepsilon_1]$ of non-integral $\mathfrak{a}$-modules. After applying the functor $D_{\langle x_1\partial_2\rangle}$ on that identity, and Lemma \ref{lem-gl-2} on each $D_{\langle x_1\partial_2\rangle}M_0[s]$,  we prove the desired identity in part (i). The proof of part (ii) is similar to the proof of (i). For part (iii) we use the same reasoning, and in particular apply Lemma \ref{lem-gl-2} for the integral case. Unfortunately in this case, some direct summands of $M_0[s]$ may have equal central characters, so we can not claim that $M_0[s]$ is a direct sum of simple highest modules. For that reason, we cannot claim that identity holds for  $\mbox{WP}_{M} (x_2\partial_1)$, but we can prove that we have an inclusion. \end{proof}

In order to explicitly write  the weights of the $x_2\partial_1$-primitive vectors in $ D_{\langle x_1\partial_2 \rangle}^{-\nu} T(s,\lambda)$, we need to introduce additional  notation. Recall that the elementary matrices of $\mathfrak{gl}_2$ are denoted by $E_{ij}$, $i,j=1,2$. For the simple finite-dimensional $\mathfrak{gl}_2$-module $L_{\mathfrak{gl}} (\lambda)$ we  use the following setting: $L_{\mathfrak{gl}} (\lambda) = \Span \{ v_0,...,v_n \}$, $n=\lambda_1-\lambda_2$, with $\mathfrak{gl}_2$-action defined by
\begin{eqnarray*} 
E_{12} v_i & = & iv_{i-1},\\
E_{21} v_i & = & (n-i)v_{i+1},\\
E_{11} v_i & = & (\lambda_1 - i)v_{i},\\
E_{22} v_i & = & (\lambda_2+i)v_{i},
\end{eqnarray*}
for $i=0,...,n$.
For any $x,y \in \C$ we introduce the following $(n+1)\times(n+1)$ matrices:
\begin{equation}
A_n(x) = \begin{bmatrix}
    x  & 0 & \dots  & 0 & 0 \\
    1 & x-1& \dots  & 0 & 0 \\
    \vdots & \vdots  & & \vdots & \vdots \\
   0 & 0 & \dots  & n & x-n
\end{bmatrix}, \; 
B_n(y) = \begin{bmatrix}
    y-n & 0 & \dots  & 0 & 0 \\
    n & y-n+1& \dots  & 0 & 0 \\
    \vdots & \vdots  & & \vdots & \vdots \\
   0 & 0 & \dots  & 1 & y
\end{bmatrix}.
\end{equation}
In particular, $B_n(x)$ is the anti-diagonal transpose of $A_n(x)$.

\begin{lemma} \label{lem-ab-eigen}
Let $s, \lambda$ be in $\C^2$, and let $T = T(s,\lambda)$. 
\begin{itemize}
\item[(i)] The matrix $M(s,\lambda) $ of the endomorphism $(x_1\partial_2)(x_2\partial_1)|_{T^{s}}$ relative to the basis $\{ x^s\otimes v_0,...,x^s\otimes v_n \}$ of $T^{s} = T(s,\lambda)^s$ is
$$
M(s,\lambda) := A_n(s_2-\lambda_1+1)B_n(s_1-\lambda_2).
$$ 
\item[(ii)] Let $v \in L_{\mathfrak{gl}}(\lambda)$ be such that $x_1 \partial_2$ is injective on $x^s \otimes v$. Then for all $\nu \in \C$, 
$$
(x_2\partial_1) (x_1 \partial_2)^{-\nu} (x^s \otimes v)=  (x_1 \partial_2)^{-\nu-1} \left( (x_1\partial_2) (x_2 \partial_1) + \nu(s_1-s_2 - \nu-1) \Id\right) (x^s \otimes v)
$$
in $D_{\langle x_1\partial_2 \rangle }^{-\nu} T (s,\lambda)$.
\item[(iii)]  The characteristic polynomial of $A_n(x)B_n(y)$ is:
\begin{equation} \label{eq-char-poly}
\det \left( \mu I_{n+1}- A_n(x)B_n(y) \right) = (\mu - xy)(\mu - (x-1)(y-1))...(\mu - (x-n)(y-n)).
\end{equation}
\end{itemize}
\end{lemma}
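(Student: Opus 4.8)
The plan is to handle the three parts in turn, extracting (i) and (iii) from the explicit $W_2$-action \eqref{def-tensor-action} and (ii) from the generalized-conjugation formula of the twisted-localization subsection.

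For part (i), I would first restrict the two root vectors to the weight space. Writing $x_1\partial_2 = \boldsymbol{x}^{(1,-1)}(x_2\partial_2)$ and $x_2\partial_1 = \boldsymbol{x}^{(-1,1)}(x_1\partial_1)$ and feeding $\boldsymbol{\alpha}=(1,-1),\ i=2$ and $\boldsymbol{\alpha}=(-1,1),\ i=1$ into \eqref{def-tensor-action}, the maps $x_2\partial_1\colon T^{s}\to T^{s-\alpha}$ and $x_1\partial_2\colon T^{s-\alpha}\to T^{s}$ become, after the identification $x^{s'}\otimes v_i\leftrightarrow v_i$, the $\mathfrak{gl}_2$-endomorphisms $Q_2=s_1-E_{11}+E_{21}$ and $Q_1=(s_2+1)-E_{22}+E_{12}$ of $L_{\mathfrak{gl}}(\lambda)$. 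Substituting the displayed $\mathfrak{gl}_2$-action of $E_{11},E_{12},E_{21},E_{22}$ on $\{v_0,\dots,v_n\}$ identifies $Q_2$ and $Q_1$ with the two bidiagonal matrices $B_n(s_1-\lambda_2)$ and $A_n(s_2-\lambda_1+1)$; since $(x_1\partial_2)(x_2\partial_1)|_{T^s}$ is the composite $Q_1Q_2$, this yields $M(s,\lambda)=A_n(s_2-\lambda_1+1)B_n(s_1-\lambda_2)$. This step is a bookkeeping of signs and indices and should be routine once the two action formulas are written out.

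For part (ii), I would apply the conjugation identity $u(a^{x}m)=a^{x}\bigl(\sum_{i\ge 0}\binom{-x}{i}\ad(a)^i(u)\,a^{-i}m\bigr)$ with $a=x_1\partial_2$, $x=-\nu$ and $u=x_2\partial_1$. The series terminates after three terms because $\ad(x_1\partial_2)(x_2\partial_1)=x_1\partial_1-x_2\partial_2=:h$, then $\ad(x_1\partial_2)^2(x_2\partial_1)=[x_1\partial_2,h]=-2\,x_1\partial_2$, and the next bracket vanishes. Collecting the $i=0,1,2$ contributions, factoring one further copy of $(x_1\partial_2)^{-1}$ to the left, and evaluating $h$ on the weight-$(s-\alpha)$ vector $(x_1\partial_2)^{-1}m$ (so that $h$ contributes $s_1-s_2-2$, not $s_1-s_2$) turns the combination $\nu h-\nu(\nu-1)$ into the scalar $\nu(s_1-s_2-\nu-1)$ and gives the asserted identity. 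The only subtlety is tracking the weight at which $h$ is evaluated; getting this right is exactly what produces the $-\nu-1$ rather than $-\nu+1$.

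The main work is part (iii). Here $A_n(x)B_n(y)$ is a product of two bidiagonal matrices, so I would read off its diagonal, super- and sub-diagonal entries explicitly and then compute $\det\bigl(\mu I_{n+1}-A_n(x)B_n(y)\bigr)$ by the standard three-term recurrence $\Delta_{n}(\mu)=(\mu-c_{nn})\Delta_{n-1}(\mu)-c_{n-1,n}c_{n,n-1}\Delta_{n-2}(\mu)$ for the leading principal minors, and verify by induction on $n$ that the proposed product $\prod_{k=0}^{n}\bigl(\mu-(x-k)(y-k)\bigr)$ satisfies the same recurrence with the same initial data $\Delta_{0},\Delta_{-1}$. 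The crux, and the step I expect to be the genuine obstacle, is checking that this fully factored polynomial obeys the recurrence once the explicit entries $c_{nn},c_{n-1,n},c_{n,n-1}$ are inserted; it is precisely here that the special ``ladder'' structure of $A_n,B_n$ must be exploited. As an alternative, should the determinant bookkeeping become unwieldy, I would instead diagonalize through the $\mathfrak{sl}_2=\langle x_1\partial_2,x_2\partial_1,h\rangle$-structure of $T(s,\lambda)$: the $(n+1)$-dimensional space $T^{s}$ splits under the Casimir, and since $(x_1\partial_2)(x_2\partial_1)=\tfrac12\bigl(\Omega+h-\tfrac12 h^2\bigr)$ on $T^{s}$, each Casimir value contributes exactly one eigenvalue of the asserted factored form, giving the characteristic polynomial directly.
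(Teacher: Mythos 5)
Your handling of parts (i) and (ii) matches the paper's, which disposes of both as direct verifications; in particular you correctly isolate the one subtlety in (ii), namely that $h=[x_1\partial_2,x_2\partial_1]$ is evaluated on the weight-$(s-\alpha)$ vector, so that $\nu(s_1-s_2-2)-\nu(\nu-1)=\nu(s_1-s_2-\nu-1)$ comes out as claimed.

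Part (iii) is where you have a genuine gap, and it is exactly the step you yourself flag as the obstacle. First, a consistency point your plan silently relies on: for the three-term minor recurrence to apply at all, $A_n(x)B_n(y)$ must be tridiagonal, which forces one of the two displayed (both lower-bidiagonal) matrices to be read as its transpose --- your own computation in (i), where the matrix of $x_1\partial_2$ comes out upper bidiagonal in the standard column convention, is what justifies that reading. More seriously, the induction you describe does not close: the recurrence $\Delta_{k+1}=(\mu-c_{kk})\Delta_k-c_{k-1,k}c_{k,k-1}\Delta_{k-1}$ relates the characteristic polynomial to the leading principal minors of the \emph{same} $(n+1)\times(n+1)$ matrix, and those minors are not of the factored form (already for $n=1$ one finds $\Delta_1=\mu-x(y-1)$, which is not a factor of $(\mu-xy)(\mu-(x-1)(y-1))$); moreover the entries of the matrix depend explicitly on $n$, so its principal submatrices are not the lower-rank products $A_kB_k$, and ``induction on $n$ with the same recurrence and the same initial data'' conflates two different sequences of polynomials. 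Carrying this out would require closed forms for all intermediate minors, which do not factor nicely --- that is the real work, and it is missing. Your $\mathfrak{sl}_2$ fallback is the right conceptual direction but assumes what is to be proved: writing $(x_1\partial_2)(x_2\partial_1)=\tfrac12\left(\Omega+h-\tfrac12 h^2\right)$ on $T^s$ reduces (iii) to knowing the spectrum of the Casimir on $T^s$, i.e.\ the central characters of the constituents of the $\mathfrak{gl}_2$-module $\bigoplus_k T^{s+k\alpha}$, which you never compute. The paper's proof is entirely different and avoids all determinant work: both sides of (\ref{eq-char-poly}) are polynomial in $(\mu,x,y)$; for the Zariski-dense set of parameters with $x\in\Z$, $y\notin\Z$, parts (i)--(ii) together with exactness of localization give $D_{\langle x_1\partial_2\rangle}^{-\nu}T(s,\lambda)\simeq D_{\langle x_1\partial_2\rangle}T(s,\lambda,2^-)$, whose $(x_2\partial_1)$-primitive weights are known from Lemma \ref{lem-prim-wht}(i), producing the $n+1$ eigenvalues $(x-i)(y-i)$ of $M(s,\lambda)$; Zariski density then yields the identity for all $x,y$. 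To salvage your route you would need either to supply the Casimir spectrum via a central-character filtration of $\bigoplus_k T^{s+k\alpha}$ (essentially the paper's argument in disguise) or to adopt the paper's specialization-plus-density trick.
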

\begin{proof}
Part (i) follows by a direct verification using the formulas (\ref{def-tensor-action}) and the explicit $\mathfrak{gl}_2$-action on $L_{\mathfrak{gl}}(\lambda)$. Part (ii) is also a subject of direct verification (see also (\ref{eq-comp-loc})). 

We can prove part (iii) with purely technical tools, but there is a more elegant proof using the structure of the modules $D_{\langle x_1\partial_2 \rangle }^{-\nu}  T(s,\lambda)$.  Let $P(\mu,x,y)$ be the characteristic polynomial of $A_n(x)B_n(y)$. Note that  $P(\mu,x,y)$  is polynomial in $\mu,x,y$.

Consider  $s,\lambda$ such that  $s_1 - \lambda_1 \notin \Z$ and $s_2 - \lambda_2 \in \Z$, and let $\nu \in \Z$. In this case $x_1 \partial_2$ is injective on $x^s \otimes v$ if and only if  $x^s \otimes v \notin T(s,\lambda, 2^+)$. If the latter holds, by part (ii) we have that $s-\nu\alpha$ is a weight of an $(x_2\partial_1)$-primitive vector  in $D_{\langle x_1\partial_2 \rangle }^{-\nu}  T(s,\lambda)$ if and only if  $\nu(\nu+1-s_1+s_2)$ is an eigenvalue of $M(s,\lambda)$. On the other hand, since $L \mapsto D_{\langle x_1\partial_2 \rangle }L$ is an exact  functor, $\nu \in \Z$, and $D_{\langle x_1\partial_2 \rangle } T(s,\lambda, 2^+) =0$, we have
$$
D_{\langle x_1\partial_2 \rangle }^{-\nu}  T(s,\lambda) \simeq D_{\langle x_1\partial_2 \rangle } T(s,\lambda) \simeq D_{\langle x_1\partial_2 \rangle } T(s,\lambda, 2^-).
$$ 
By Lemma \ref{lem-prim-wht}(i), 
$$
(s_1+s_2 - \lambda_2-i+1, \lambda_2+i-1) = s - (\lambda_2-s_2+i-1)\alpha, \; i=0,...,\lambda_1-\lambda_2,
$$ are the weights of the set of $(x_2\partial_1)$-primitive vectors of $D_{\langle x_1\partial_2 \rangle } T(s,\lambda, 2^-)$ that are on the $\alpha$-line $s+ \Z\alpha$. Therefore, $xy$, $(x-1)(y-1)$,... $(x-n)(y-n)$ are all eigenvalues (with possible repetitions) of $(x_1\partial_2) (x_2 \partial_1)|_{T^s}$, where $x = s_2 - \lambda_2 + 1$ and $y = s_1 - \lambda_2$. Thus (\ref{eq-char-poly}) holds for all $\mu$, $x \in \Z$, $y \notin \Z$. Since $\Z \times \left( \C\setminus \Z\right)$ is a Zariski dense subset of $\C^2$, we have that (\ref{eq-char-poly})  holds for all $\mu,x,y$. \end{proof}

\begin{lemma} \label{lem-reverse-loc}
Let $\lambda, s \in \C^2$ be such that $\lambda_i - s_i \notin \Z$, $i=1,2$, and $\lambda \neq (1,0)$.
\begin{itemize}
\item[(i)] If $\lambda_1 + \lambda_2 - s_1 - s_2 \notin \Z$, then the following isomorphisms hold:
\begin{itemize}
\item[(a)] $D_{\langle x_1\partial_2\rangle}^{\nu_2}T(s- \nu_2\alpha, \lambda, 2^-) \simeq T(s, \lambda)$,
\item[(b)] $D_{\langle x_1\partial_2\rangle}^{\nu_1}T(s- \nu_1\alpha, \lambda, 1^+) \simeq T(s, \lambda)$,
\end{itemize} 
for  $\nu_2 = s_2-\lambda_2 + 1$ and $\nu_1 = s_1- \lambda_1$.
\item[(ii)] If $\lambda_1 + \lambda_2 - s_1 - s_2 \in \Z$, then 
$$
D_{\langle x_1\partial_2\rangle}^{\nu}T(s- \nu \alpha, \lambda, (1^+,2^-)) \simeq T(s, \lambda)
$$
for  $\nu_2 = s_2-\lambda_2 + 1$ and for $\nu_1 = s_1-\lambda_1$.
\end{itemize}
\begin{proof}
For part (i)(a) consider first the module $D_{\langle x_2\partial_2\rangle}^{-\nu_2}T(s, \lambda)$ (where $\nu_2 = s_2-\lambda_2$), and let  $\nu \in \C$ be such that  $\nu - \nu_2 \in \Z$.  By Lemma \ref{lem-ab-eigen}(ii) we know that $s-\nu\alpha$ is a weight of an $(x_2\partial_1)$-primitive vector  in $D_{\langle x_1\partial_2 \rangle }^{-\nu_2}  T(s,\lambda)$ if and only if  $\nu(\nu+1-s_1+s_2)$ is an eigenvalue of $M(s,\lambda)$. But by Lemma \ref{lem-ab-eigen}(ii), all such eigenvalues are $(s_2-\lambda_2-i+1)(s_1-\lambda_2-i)$, $i=0,...,\lambda_1-\lambda_2$. Recall that $\nu - s_2+\lambda_2 \in \Z$ and $\lambda_1 + \lambda_2 - s_1 - s_2 \notin \Z$. Hence,  $s-\nu\alpha$ is a weight of an $(x_2\partial_1)$-primitive vector  in $D_{\langle x_1\partial_2 \rangle }^{-\nu_2}  T(s,\lambda)$ if and only if $\nu = s_2-\lambda_2-i+1$ for some $i \in \{0,1,...,\lambda_1-\lambda_2\}$.

On the other hand,  $T(s,\lambda)$ is dense and $D_{\langle x_1\partial_2 \rangle }^{-\nu_2}  T(s,\lambda)$ has $(x_2\partial_1)$-primitive vectors. Thus  by Lemma \ref{lem-3-cases}, we have $T(s,\lambda) = D_{\langle x_1\partial_2 \rangle }^{\nu_2}  T(s-\nu_2\alpha,\lambda', 2^-)$ for some $\lambda' \in \C^2$. Hence, $D_{\langle x_1\partial_2 \rangle }^{-\nu_2}  T(s,\lambda) \simeq  D_{\langle x_1\partial_2 \rangle } T(s-\nu_2\alpha,\lambda', 2^-)$. Using Lemma \ref{lem-prim-wht}(i) and the description of the weights of $(x_2\partial_1)$-primitive vectors  in $D_{\langle x_1\partial_2 \rangle }^{-\nu_2}  T(s,\lambda)$,  we obtain 
$$
 {\rm Hor} (s_1 - \nu_2, [\lambda_1-1, \lambda_2-1]) =  {\rm Hor} (s_1 - \nu_2, [\lambda'_1-1, \lambda'_2-1]).
 $$
 Thus $\lambda = \lambda'$, which completes the proof of (i)(a). 

For parts (i)(b) and (ii) we use the same reasoning, namely we apply again the corresponding parts of  Lemma \ref{lem-ab-eigen}, Lemma \ref{lem-3-cases},  Lemma \ref{lem-prim-wht}. Part (i)(b) will automatically follow, while for part (ii) we will have at the end
$$
 {\rm Hor} (s_1 - \nu_2, [\lambda_1-1, \lambda_2-1]) \cup {\rm Ver} ([\lambda_1, \lambda_2], s_2 + \nu_2) \subset  {\rm Hor} (s_1 - \nu_2, [\lambda'_1-1, \lambda'_2-1]) \cup {\rm Ver} ([\lambda'_1, \lambda'_2], s_2 + \nu_2).
$$
However, the above condition is sufficient to conclude that $\lambda = \lambda'$. \end{proof}
\end{lemma}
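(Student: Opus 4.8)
The plan is to prove each isomorphism by starting from the dense module $T(s,\lambda)$ and reversing the localization, feeding back into the structural results already established. Since $\lambda_i-s_i\notin\Z$ for $i=1,2$, we have ${\rm Int}(\lambda-s)=\emptyset$, so $T(s,\lambda)=T(s,\lambda,\emptyset)$ has no proper tensor submodules and is a simple bounded dense $W_2$-module. Lemma \ref{lem-3-cases} therefore applies and yields an exponent $\mu\notin\Z$ together with $M_0$ of one of the three types such that $T(s,\lambda)\simeq D_{\langle x_1\partial_2\rangle}^{\mu}M_0$. The integrality dichotomy in the hypothesis is exactly what selects the type of $M_0$: the quantity $\lambda_1+\lambda_2-s_1-s_2$ is invariant under shifts by $\alpha=\varepsilon_1-\varepsilon_2$ and hence under the localization, so when it is $\notin\Z$ the module $M_0$ is a half-plane module (type $2^-$ for (i)(a), type $1^+$ for (i)(b), each realization valid by the symmetry between the two horizontal/vertical clauses of Lemma \ref{lem-3-cases}), while when it lies in $\Z$ the module $M_0$ is the doubly-integral module of type $(1^+,2^-)$ for (ii).

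With the type fixed, the next step is to pin down both the exponent and the $\mathfrak{gl}_2$-weight $\lambda'$ of $M_0$ by locating the $(x_2\partial_1)$-primitive vectors of the reverse localization $D_{\langle x_1\partial_2\rangle}^{-\nu}T(s,\lambda)$. By Lemma \ref{lem-ab-eigen}(ii), a vector over the weight $s-\nu\alpha$ becomes $(x_2\partial_1)$-primitive precisely when $\nu(\nu+1-s_1+s_2)$ is an eigenvalue of the endomorphism $(x_1\partial_2)(x_2\partial_1)|_{T^s}$, whose matrix is $M(s,\lambda)=A_n(s_2-\lambda_1+1)B_n(s_1-\lambda_2)$ by Lemma \ref{lem-ab-eigen}(i). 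The factored characteristic polynomial (\ref{eq-char-poly}) of Lemma \ref{lem-ab-eigen}(iii) lists these eigenvalues explicitly, and solving for $\nu$ gives exactly the values $\nu=s_2-\lambda_2-i+1$, $i=0,\dots,n$, along the line $s+\Z\alpha$; the non-integrality of $\lambda_1+\lambda_2-s_1-s_2$ is what guarantees that no extra solutions are produced by the second factor. This identifies the exponent as $\nu_2=s_2-\lambda_2+1$ (the $i=0$ endpoint), and by the symmetric computation as $\nu_1=s_1-\lambda_1$ in case (b).

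It then remains to compare the full sets of primitive weights. On the side of $M_0$, Lemma \ref{lem-prim-wht}(i) gives ${\rm WP}_{D_{\langle x_1\partial_2\rangle}M_0}(x_2\partial_1)={\rm Hor}(s_1-\nu_2,[\lambda'_1-1,\lambda'_2-1])$, whereas the eigenvalue computation above produces the same horizontal strip with $\lambda$ in place of $\lambda'$; equating the two strips forces $\lambda'=\lambda$ and completes (i)(a). Parts (i)(b) and (ii) run along identical lines, invoking instead the $1^+$ and $(1^+,2^-)$ clauses of Lemmas \ref{lem-ab-eigen} and \ref{lem-prim-wht} and, in case (ii), the vertical strip ${\rm Ver}$ as well.

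The step I expect to be the main obstacle is part (ii). There Lemma \ref{lem-prim-wht}(iii) only supplies an inclusion ${\rm WP}_M(x_2\partial_1)\subset{\rm Hor}(\cdots)\cup{\rm Ver}(\cdots)$, because in the doubly-integral case distinct simple summands of $M_0[s]$ can share a central character, so $M_0[s]$ need not be semisimple and certain candidate primitive weights may fail to be realized. The delicate point is to extract $\lambda'=\lambda$ from this one-sided containment rather than from an equality. I expect the resolution to be that the \emph{corner} weights of the horizontal and vertical strips---the endpoints $\lambda_1,\lambda_2$ of the defining intervals---correspond to genuinely primitive vectors whose primitivity does not depend on complete reducibility, so that matching only these extreme integral points already forces the strip endpoints to coincide, and hence $\lambda=\lambda'$.
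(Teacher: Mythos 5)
Your proposal is correct and takes essentially the same route as the paper's own proof: realize the dense module $T(s,\lambda)$ as a reverse twisted localization of some $M_0$ via Lemma \ref{lem-3-cases}, locate the $(x_2\partial_1)$-primitive weights through the eigenvalue analysis of Lemma \ref{lem-ab-eigen}, and match the resulting strips against Lemma \ref{lem-prim-wht} to force $\lambda'=\lambda$. The point you flag as delicate in part (ii) is precisely where the paper is also terse---it likewise obtains only the one-sided inclusion of strip unions and asserts without detail that it suffices---so your corner-weight suggestion is a gloss on the same final step rather than a different argument.
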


Using Lemma \ref{lem-3-cases} and Lemma \ref{lem-reverse-loc}, we obtain the classification of simple bounded dense $W_2$-modules.
\begin{theorem} \label{th-dense}
If $M$ is a simple bounded dense $W_2$-module, then $M \simeq T(\nu,\lambda)$ for some $\nu, \lambda$, such that $\lambda_i - \nu_i \notin \Z$, $i=1,2$, $\lambda \neq (1,0)$. 
\end{theorem}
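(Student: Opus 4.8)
The plan is to read Lemma \ref{lem-reverse-loc} as the inverse of Lemma \ref{lem-3-cases}: the first lemma peels $M$ down to a half-plane tensor module by a twisted localization at $x_1 \partial_2$, and the second rebuilds a \emph{dense} tensor module by localizing back. So first I would apply Lemma \ref{lem-3-cases} to obtain $\nu \notin \Z$ and a simple module $M_0$, isomorphic to one of $T(\nu',\lambda',2^-)$, $T(\nu',\lambda',1^+)$, or $T(\nu',\lambda',(1^+,2^-))$, with $M \simeq D_{\langle x_1 \partial_2 \rangle}^{\nu} M_0$. Since $M_0$ is simple and is presented as the whole generalized tensor module $T(\nu',\lambda',J)$, the structure of the half-plane tensor modules (and Proposition \ref{prop-tensor-simple-w2}(ii) in the third case, using $T(\nu',\lambda',(1^+,2^-)) \simeq T(\lambda',\lambda',(1^+,2^-))$) forces $\lambda' \neq (1,0)$; this eventually delivers the exclusion $\lambda \neq (1,0)$ in the statement.

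Next I would run the three cases in parallel against parts (i)(a), (i)(b), and (ii) of Lemma \ref{lem-reverse-loc}. The split is governed by the integrality of $\lambda'_1 + \lambda'_2 - \nu'_1 - \nu'_2 = (\lambda'_1 - \nu'_1) + (\lambda'_2 - \nu'_2)$: in the first two cases exactly one summand lies in $\Z$, so the total is not in $\Z$ and parts (i)(a)/(i)(b) apply, while in the third case both summands lie in $\Z$, so the total is in $\Z$ and part (ii) applies. In every case Lemma \ref{lem-reverse-loc} exhibits a dense tensor module as a suitable twisted localization of $M_0$ at $x_1 \partial_2$. Combining this with $M \simeq D_{\langle x_1 \partial_2 \rangle}^{\nu} M_0$, the additivity $D_{\langle x_1 \partial_2 \rangle}^{a+b} \simeq D_{\langle x_1 \partial_2 \rangle}^{a} D_{\langle x_1 \partial_2 \rangle}^{b}$ of the twisted localization functor, and the fact that a twisted localization at $x_1 \partial_2$ of a dense tensor module is again a dense tensor module with the same $\mathfrak{gl}_2$-weight (the $W_2$-analogue of Lemma \ref{lem-iso-loc-a}(iii), proved by the same explicit computation on the basis), I would conclude $M \simeq T(\nu,\lambda)$ with $\lambda = \lambda'$.

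Finally the two defining conditions come for free. The inequality $\lambda \neq (1,0)$ was already secured from the simplicity of $M_0$. For $\lambda_i - \nu_i \notin \Z$, I would argue by contradiction: if $\lambda_i - \nu_i \in \Z$ for some $i$, then ${\rm Int}(\lambda - \nu) \neq \emptyset$ and $T(\nu,\lambda)$ carries a proper nonzero submodule $T(\nu,\lambda,J)$ with $J^- = \emptyset$ (Definition \ref{def-tensor}), whose support is a proper subset of $\nu + \Z^2$; this contradicts the simplicity and density of $M \simeq T(\nu,\lambda)$.

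I expect the main obstacle to be the reconciliation of localization exponents: the $\nu$ produced by Lemma \ref{lem-3-cases} need not coincide with the distinguished exponent appearing inside Lemma \ref{lem-reverse-loc}, so the key technical point is to verify that twisting the resulting dense tensor module by the leftover non-integral amount stays inside the family $\{ T(\,\cdot\,, \lambda) \}$ rather than producing a module of a new type. Once that shift-invariance is in place, identifying $M$ as $T(\nu,\lambda)$ and checking the two conditions are routine.
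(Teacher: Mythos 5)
Your proposal follows essentially the same route as the paper: the paper's own proof of this theorem is precisely the combination of Lemma \ref{lem-3-cases} (peeling $M$ down, via twisted localization at $x_1\partial_2$, to one of the three modules $T(\nu',\lambda',2^-)$, $T(\nu',\lambda',1^+)$, $T(\nu',\lambda',(1^+,2^-))$) with Lemma \ref{lem-reverse-loc} (realizing dense tensor modules as twisted localizations of exactly those modules), glued by Lemma \ref{lem-tw-loc-simple} and the additivity of $D^{\bullet}_{\langle x_1\partial_2\rangle}$. The exponent-reconciliation point you flag at the end --- absorbing the leftover non-integral twist via a $W_2$-analogue of Lemma \ref{lem-iso-loc-a}(iii), i.e.\ shift-invariance of the family $T(\cdot,\lambda)$ under twisted localization at $x_1\partial_2$ --- is exactly the step the paper leaves implicit, so your write-up is, if anything, more explicit about the one genuinely technical part of the argument.
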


\subsection{Main Theorem}
Combining Theorems \ref{th-hw-bounded}, \ref{th-half-plane}, and \ref{th-dense} we obtain our main result in the paper.

\begin{theorem} \label{th-main}
Let $M $ be a simple bounded $W_2$-module. Then either $M \simeq \C$ or  $M \simeq T(\nu,\lambda, J)$ for some $\nu,\lambda \in \C^2$, and $J \in \mathcal{PM} (\lambda - \nu)$, such that: 
$$
\lambda \neq (1,0), (\nu,\lambda, J) \neq ((0,0),(0,0),(1^+,2^+)), (\nu,\lambda, J) \neq ((1,1),(1,1),(1^-,2^-)).
$$
 Furthermore, two modules $T(\nu,\lambda, J) $ and $T(\nu',\lambda', J')$ in this list are isomorphic if and only if $\nu - \nu' \in \Z^2$, $\lambda = \lambda'$, and $J = J'$.
\end{theorem}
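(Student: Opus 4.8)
The plan is to organize the simple bounded $W_2$-modules $M$ by their support and to apply the three preceding classification theorems, one per support type. If $M \simeq \C$ we are in the first alternative; otherwise Proposition \ref{prop-w2-par} tells us that $\Supp M$ is one of the shapes (i)--(xv), and these split into exactly three families according to the cardinality of $\mathrm{Int}(\lambda - \nu)$ for the target tensor module. The dense support (xv) will correspond to $\mathrm{Int}(\lambda-\nu) = \emptyset$ and $J = \emptyset$; the half-plane supports (i)--(vi) to $\mathrm{Int}(\lambda-\nu) = \{i\}$ and a one-index $J = i^{\pm}$; and the cone (highest weight) supports (vii)--(xiv) to $\mathrm{Int}(\lambda - \nu) = \{1,2\}$ and a two-index $J$.

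For the existence statement I would treat the three families in turn. In the dense case Theorem \ref{th-dense} gives $M \simeq T(\nu,\lambda) = T(\nu,\lambda,\emptyset)$ with $\lambda_i - \nu_i \notin \Z$ and $\lambda \neq (1,0)$, already of the required form. For a half-plane support, the parabolic induction theorem (Theorem \ref{th-par-ind}) together with Lemma \ref{lem-levi-a} presents $M$ as $L_{\mathfrak p}\,T(\nu,\lambda,c)$ for some $\mathfrak p \in \{\mathfrak p(1^{\pm}),\mathfrak p(2^{\pm})\}$ and a simple dense bounded $\A$-module $T(\nu,\lambda,c)$ (classified by Theorem \ref{th-bounded-a}); the diagonal parabolics $\mathfrak p(12^{\pm})$ are excluded because their half-plane modules are never bounded. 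Theorem \ref{th-half-plane} then records the boundedness conditions, and Proposition \ref{prop-w2-bnd-nec} identifies $M$ with a one-index tensor module $T(\nu',\lambda',J')$, for which $\lambda' \neq (1,0)$ holds automatically. For a cone support, $M$ is a bounded simple highest weight module $L_{\mathfrak b}(\lambda)$ for one of the six Borel subalgebras; Theorem \ref{th-hw-bounded} gives the boundedness conditions and Proposition \ref{prop-tensor-simple-w2} identifies $M$ with one of the four two-index tensor modules $T(\mu,\mu,J)$, where the affine reparametrization $\lambda \mapsto (\mu,J)$ depends on $\mathfrak b$. The key bookkeeping observation is that the excluded data --- $\lambda = (1,0)$ together with the triples $((0,0),(0,0),(1^+,2^+))$ and $((1,1),(1,1),(1^-,2^-))$ --- are exactly the parameters at which $T(\nu,\lambda,J)$ fails to be simple, by Proposition \ref{prop-tensor-simple-w2} and the decomposition of the one-index tensor modules; so the resulting list is precisely the list of simple tensor modules.

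For the isomorphism statement the ``if'' direction is immediate, since replacing $\nu$ by any element of $\nu + \Z^2$ leaves the underlying space $x^{\nu}\C[x^{\pm 1}] \otimes L_{\mathfrak{gl}}(\lambda)$ unchanged, while $\lambda = \lambda'$ and $J = J'$ give the same construction. For the ``only if'' direction I would use that an isomorphism preserves supports: the support type --- dense versus half-plane versus cone, together with which coordinate is integral and in which direction the support is bounded --- determines the type of $J$, so $J = J'$. Next, for fixed $J$, the lengths of the $\alpha$-strings (with $\alpha = \varepsilon_1 - \varepsilon_2$) inside the support, read from the weight multiplicities through the tensor-module character formulae (see Remark \ref{rem-char-half}) and the internal $\mathfrak{gl}_2$-structure, determine $\lambda = \lambda'$; in the dense case this is exactly the tensor-module isomorphism criterion established earlier. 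Finally, since $\Supp T(\nu,\lambda,J) \subset \nu + \lambda + \Z^2$, equality of supports together with $\lambda = \lambda'$ forces $\nu - \nu' \in \Z^2$.

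I expect the ``only if'' direction of the isomorphism classification to be the main obstacle. The delicate point is to confirm that, once the degenerate non-simple parameters are excluded, no accidental isomorphisms survive across distinct pairs $(\lambda, J)$ --- in particular that the restriction $\lambda \neq (1,0)$ eliminates all the $W_2$-analogues of the $W_1$ coincidence $T(\nu,0) \simeq T(\nu,1)$ of Theorem \ref{th-class-w1}. This forces one to compare not merely supports but the full weight-multiplicity functions and the $\mathfrak{gl}_2$-action on each $\alpha$-string; in the cone case one must in addition track the $\rho$-type shift relating the Borel-highest weight $\lambda$ to the tensor parameters $\mu$, so that the six Borel families are matched correctly with the four two-index tensor families.
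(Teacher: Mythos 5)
Your proposal is correct in approach and matches the paper's own proof, which is literally a one-line combination of Theorems \ref{th-hw-bounded}, \ref{th-half-plane}, and \ref{th-dense}: you make the same three-way split by support type (cone, half-plane, dense), use Proposition \ref{prop-w2-bnd-nec} and Proposition \ref{prop-tensor-simple-w2} in the same way to convert parabolic/highest-weight data into tensor-module parameters, do the same bookkeeping of excluded non-simple parameters, and your sketch of the isomorphism classification (supports determine $J$, the $\mathfrak{gl}_2$-string structure and characters determine $\lambda$, whence $\nu - \nu' \in \Z^2$) fills in detail the paper leaves implicit but consistent with its earlier proposition on isomorphisms of tensor modules. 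One caveat, which your write-up inherits from the paper rather than introduces: the step asserting that Proposition \ref{prop-tensor-simple-w2} identifies every bounded simple highest weight module with one of the four two-index tensor modules fails for $L_{\mathfrak{p}(2^+,12^+)}((1,0)) \simeq T((0,0),(0,0),(1^+,2^+))/\C \simeq \C[x_1,x_2]/\C$ (and dually for the simple submodule of $T((1,1),(1,1),(1^-,2^-))$), since by that very proposition these simple bounded modules are proper subquotients of tensor modules, and a character/support comparison shows they are not isomorphic to any $T(\nu,\lambda,J)$ in the stated list --- so the "key bookkeeping observation" that the simple tensor modules exhaust all simple bounded modules is exactly the point where both your argument and the paper's one-line proof are silently incomplete.
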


\end{document}